\newtheorem{prop}{Proposition}[section]
\newtheorem{theo}[prop]{Theorem}
\newtheorem{cor}[prop]{Corollary}
\newtheorem{defi}[prop]{Definition}
\newtheorem{lem}[prop]{Lemma}
\newtheorem{remark}[prop]{Remark}
\title{Complexity of del Pezzo surfaces with du Val singularities}
\date{}
\author{V.Nadler}
\begin{document}
\maketitle
\doublespacing
\selectlanguage{english}

\abstract{\noindent }
We compute the complexity of del Pezzo surfaces with du Val singularities.
\begin{spacing}{1.0}

\section{Introduction}
The classification of del Pezzo Surfaces with du Val singularities is well-known, cf. \cite{Prokhorov}.  We want to understand "how far" a surface with given singularities is from being toric. By \cite{Shokurov}, the surface is toric if and only if the complexity is zero. Such surfaces are well known. 
In this paper we compute the complexity for del Pezzo surfaces with du Val singularities.

By \emph{a del Pezzo surface} we mean a normal projective Gorenstein surface with ample anti-canonical divisor. 
Let $X$ be a del Pezzo surface.
Let us define \textit{the boundary divisor} as $D=\sum d_iD_i$ with  $0 < d_i \le 1, d_i \in \mathbb{Q}$, where $D_i$ are prime Weil divisors.  Suppose that $K_X+D$ is $\mathbb{Q}$-Cartier, i.e., $\exists n\in \mathbb{N}\colon n(K_X+D)$ is a Cartier divisor.

Recall that the pair $(X,D)$ is called \textit{log-canonical} if for any birational morphism $f: Y \to X$ such that $Y$ is normal we have
\begin{equation*}
K_Y+\widetilde{D} = f^*(K_X+D)+ \sum_{i=1}^N a(X,D,E_i)E_i,    
\end{equation*}
 with $a(X,D,E_i) \ge -1 $ for all $E_i$. Here $\widetilde{D}$ is a strict transform of $D$, $E_i$ are exceptional divisors of $f$.
Denote by $\rho(X)$ the rank of  the Neron-Severi group $\mathrm{NS}(X)$ of $X$. 
The notion of complexity was introduced by V.Shokurov, cf. \cite{Shokurov}.

\begin{defi}
\emph{The complexity}  of $X$ is defined as follows:
\begin{equation}
    \gamma(X) =\mathrm{inf} \lbrace \rho(X)+ \mathrm{dim}X - \sum d_i \colon  K_X+D\equiv 0\rbrace
\end{equation}
where the infimum is taken among all the boundaries $D=\sum d_iD_i$ such that $(X,D)$ is log-canonical. 

\begin{remark}
    If a variety is toric, then the complexity is zero. The converse is true by \cite{McKernan}.
\end{remark}
 
\end{defi}
For convinience we define a similar invariant:
\begin{defi}
Denote the \emph{$\sigma$-invariant} of $X$ as follows:
\begin{equation}
\sigma(X) =\mathrm{\sup_{D}} \lbrace  \sum d_i \colon  K_X+D\equiv 0\rbrace
\end{equation}
where the supremum is taken among all the boundaries $D=\sum d_iD_i$ such that $(X,D)$ is log-canonical.
\end{defi}

It is clear that
\begin{equation}
    \gamma(X)=\mathrm{dim}X+\rho(X)-\sigma(X).
\end{equation}
In this paper, $\sigma(X)$ rather than $\gamma(X)$ is computed, because $\rho(X)$ is always known from the given surface and $\dim X = 2$.

Let $X$ be a del Pezzo surface with du Val singularities. Denote $d=K_X^2$.  It is well known that $d \in \lbrace 1, \dots, 9 \rbrace$.

We prove the following
\begin{theo}

    Whenever $d \ge 7$, we have $\gamma(X)=0$.  
    
    If $5 \le d \le 6$, then $\gamma(X) \in \lbrace 0,1 \rbrace$.

    If $d = 4$, then $\sigma(X) \in \lbrace \frac{3}{2}, \frac{5}{2}, 3, 4 \rbrace$.

    If  $d = 3$, then $\sigma(X) \in \lbrace \frac{4}{3}, 2, \frac{5}{2}, 3 \rbrace$.

    If  $d = 2$, then $\sigma(X) \in \lbrace \frac{7}{6}, \frac{3}{2}, 2 \rbrace$ .

    If $d = 1$, then $\sigma(X) = 1 $ .
\end{theo}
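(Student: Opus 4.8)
The plan is to argue by the classification of del Pezzo surfaces with du Val singularities (cf. \cite{Prokhorov}): for each degree $d=K_X^2$ and each du Val configuration of singular points there are only finitely many surfaces, and for each of them I will bound $\sigma(X)$ from above and from below and check that the two bounds coincide. Throughout I use the minimal resolution $\pi\colon\widetilde X\to X$, which is \emph{crepant} (du Val singularities are canonical), so that $K_{\widetilde X}=\pi^*K_X$ and $\widetilde X$ is a smooth weak del Pezzo surface of degree $d$ whose exceptional locus is a disjoint union of ADE chains of $(-2)$-curves $E_j$.

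For the \emph{upper bound} the starting point is purely numerical. Since $X$ is Gorenstein, $-K_X$ is Cartier, and being ample it satisfies $-K_X\cdot C\ge 1$ for every irreducible curve $C\subset X$; hence for any boundary $D=\sum d_iD_i\equiv -K_X$ one has
\[
d=(-K_X)^2=(-K_X)\cdot D=\sum_i d_i\,(-K_X\cdot D_i)\ \ge\ \sum_i d_i ,
\]
so $\sigma(X)\le d$ with no use of log canonicity, and together with the inequality $\gamma(X)\ge 0$ (the ``converse'' in the Remark), i.e.\ $\sigma(X)\le 2+\rho(X)$, this already accounts for $\sigma(X)\le 1$ when $d=1$ and for the top values $\sigma=d$ occurring for $d=2,3,4$. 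When the claimed value is strictly below $\min\{d,\,2+\rho(X)\}$ I sharpen the estimate in two complementary ways. First, writing $\pi^*(K_X+D)=K_{\widetilde X}+\widetilde D+\Gamma$ with $\widetilde D$ the strict transform of $D$ and $\Gamma=\sum_j\gamma_jE_j\ (\ge 0)$ supported on the $(-2)$-curves, the coefficients $\gamma_j$ are recovered from the invertible linear system
\[
\sum_j\gamma_j\,(E_j\cdot E_k)=-\widetilde D\cdot E_k ,
\]
whose matrix is minus the relevant ADE Cartan matrix; since $(X,D)$ is log canonical iff $(\widetilde X,\widetilde D+\Gamma)$ is, one gets $\gamma_j\le 1$ for all $j$ together with the ordinary-node conditions at the triple points of $\widetilde D+\Gamma$, and these inequalities cap the multiplicities with which $D$ can pass through a given du Val point. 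Second, one exploits the short list of low anticanonical degree curves on $X$: on the more singular surfaces the ``lines'' (curves $\ell$ with $-K_X\cdot\ell=1$) are few and are forced through the singular points, so a boundary cannot be assembled from many coefficient-one components — one is pushed either to use components with $-K_X\cdot D_i\ge 2$, which lowers $\sum d_i$ by the displayed identity, or to drop the coefficients of the lines through the worst point, which is exactly what the Cartan-matrix inequalities quantify. Combining the two inputs produces the fractional bounds $\tfrac76,\tfrac43,\tfrac32,\tfrac52$.

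For the \emph{lower bound} I exhibit in each case an explicit log-canonical $D\equiv -K_X$ attaining the asserted value. The building blocks are degenerate anticanonical divisors: a cycle (or chain) of lines, each with coefficient $1$, is automatically log canonical because locally it is an ordinary node (or a simple normal crossing configuration) and raises $\sum d_i$ to the number of components; when a fractional value is needed one adds a further component with a rational coefficient chosen so that $K_X+D\equiv 0$ and the pair stays log canonical at the worst du Val point, the admissible value being precisely the one produced by the resolution computation above. For $d=1$ a general member of $|-K_X|$ is an irreducible (arithmetic genus one) curve disjoint from (or meeting mildly) the singular locus, so $\sigma(X)\ge 1$; with $\sigma(X)\le 1$ this forces $\sigma(X)=1$ for every such surface. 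Finally $\gamma(X)=2+\rho(X)-\sigma(X)$ and $\rho(X)=10-d-\mu$, where $\mu$ is the total Milnor number of the singular points (equivalently the number of $(-2)$-curves on $\widetilde X$), so the statements for $d\ge 7$ and $5\le d\le 6$ follow by inspecting the (short) lists of surfaces: for $d\ge 7$ every such $X$ is toric, hence $\gamma(X)=0$, while for $5\le d\le 6$ the only possibilities turn out to be $\gamma=0$ (the toric cases) and $\gamma=1$.

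I expect the main obstacle to be the bookkeeping for $d=3$ and $d=4$: there the number of du Val configurations is large, and for each one both the construction of an optimal boundary and the proof of its optimality require a hands-on study of the finitely many low-degree curves together with the local log-canonicity constraints at the singular points. The genuinely delicate cases are the ones producing the non-obvious fractional values — $\sigma=\tfrac32$ and $\tfrac52$ for $d=4$, $\sigma=\tfrac43$ for $d=3$, $\sigma=\tfrac76$ for $d=2$ — where the binding constraint is not the crude inequality $\sigma\le d$ but a tight interaction between the numerical relation $(-K_X)\cdot D=d$ and log canonicity at a large singular point (such as $D_n$ or $E_6$), so that one must determine exactly which curves may appear in $D$ and with which coefficients.
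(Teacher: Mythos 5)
Your overall frame (pass to the crepant minimal resolution, get $\sigma(X)\le d$ from $d=(-K)\cdot D\ge\sum d_i$, then exhibit boundaries attaining the bound) matches the paper's starting point (its Lemma \ref{kleiman} and the degree-$1$ case are exactly your first two steps). But beyond that the proposal is a plan rather than a proof, and the plan has a concrete hole precisely where the theorem's content lies: the exceptional values $\tfrac32,\tfrac52,\tfrac43,\tfrac76$ and the verification that $\sigma$ lands in the stated finite sets for \emph{every} singularity type. You say that combining the discrepancy/Cartan-matrix inequalities with the scarcity of low-degree curves ``produces the fractional bounds,'' but you never identify which surfaces these are nor carry out the estimate; in the paper this is the bulk of the work, organized not by brute-force classification of du Val configurations but by a dichotomy on the configuration $D(Y)$ of negative curves on the resolution: if $D(Y)$ contains a cycle of $(-1)$- and $(-2)$-curves one gets a $1$-complement (Lemmas \ref{exists}, \ref{cyclic_d}) via an induction on blow-ups (Lemma \ref{induction_lemma}), and if $D(Y)$ is a tree one needs the tailored upper-bound arguments of Theorem \ref{Treetheorem} (the ``slave divisor'' intersection estimates, plus the contraction tricks for $Y_{3,4}$ and $Y_{2,4}$ that yield $\tfrac43$ and $\tfrac76$). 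Nothing in your sketch substitutes for that second half; ``the binding constraint is a tight interaction\dots so one must determine exactly which curves may appear'' is a restatement of the problem, not an argument.

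A second genuine error: your lower-bound construction asserts that a cycle of lines with coefficient $1$ is ``automatically log canonical because locally it is an ordinary node.'' This is false in general and the paper flags it explicitly (the remark after Definition \ref{cycle_def}: three $(-1)$-curves through one point, or two curves tangent at a point, give a non-lc pair even though they form a numerical cycle). Handling exactly these degenerate cycles is the purpose of Section 5 (Lemmas \ref{notsnc3}, \ref{notsnc2}), where new boundaries with coefficients $\tfrac12$ or $\tfrac34$ must be built by contracting and re-expanding the configuration; without an argument of this kind your claimed sharpness fails on those surfaces. So the proposal correctly locates the easy bounds but is missing both the mechanism that proves sharp upper bounds in the tree cases and the repair of log canonicity when the anticanonical cycle is not snc.
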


\textbf{Sketch of the proof.} First of all, we calculate the complexity for smooth del Pezzo surfaces in theorem \ref{smooth}. Then for del Pezzo surfaces with du Val singularities we get the upper bound for $\sigma(X)$ in Lemma \ref{kleiman}. Then we use the minimal resolution and reduce the problem to the computations of a similar invariant on smooth weak del Pezzo surfaces (definition \ref{sigma_prime}). Then we use the notion of a cycle which is a special configuration of $(-1)$-curves and $(-2)$-curves on a minimal resolution. If a cycle exists, then to compute the complexity of the initial surface is easy (Lemma \ref{exists}). Smooth weak del Pezzo surfaces with a cycle constitute the majority of all smooth weak del Pezzo surfaces, that is proved in Lemma \ref{induction_lemma}, which provides an induction step. Statements of Section 5  are devoted to the exceptional cases when a cycle exists but the support of the boundary divisor is not snc, so the idea of Lemma \ref{induction_lemma} also works in this case. In Section 6 we work with the case when the cycle does not exist on a minimal resolution and deduce the answer by constructing a special divisor on such resolution.

\section{Complexity of smooth del Pezzo surfaces}
In this section  we compute the complexity of smooth del Pezzo surfaces.
\begin{theo}\label{smooth}
For a smooth del Pezzo surface $X$ of degree $d \ge 7$ we have $\gamma(X) = 0$, and if  $d \le 6$ then $\gamma(X) = 2(6-d)$.
\end{theo}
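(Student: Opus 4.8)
The plan is to separate the toric range from the non-toric range, and in the latter to squeeze $\sigma(X)$ between two matching bounds. If $d\ge 7$, then $X$ is one of $\mathbb{P}^2$, $\mathbb{P}^1\times\mathbb{P}^1$, or the blow-up of $\mathbb{P}^2$ at one or two points; all of these are toric, so $\gamma(X)=0$ by the Remark following the definition of complexity. The degree $6$ del Pezzo surface is the blow-up of $\mathbb{P}^2$ at three general points, which is again toric, so the same argument gives $\gamma(X)=0=2(6-d)$ in that case. It remains to treat $1\le d\le 5$, where $X=\mathrm{Bl}_{9-d}\mathbb{P}^2$ has $\rho(X)=10-d$; by equation (3) the claim $\gamma(X)=2(6-d)$ is then equivalent to $\sigma(X)=K_X^2=d$, since $\gamma(X)=2+\rho(X)-\sigma(X)=12-d-\sigma(X)$.

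For the upper bound $\sigma(X)\le d$, let $D=\sum d_iD_i$ be any log-canonical boundary with $K_X+D\equiv 0$, so $\sum d_iD_i\equiv -K_X$. Intersecting with the ample Cartier divisor $-K_X$ gives $\sum d_i\,\bigl(D_i\cdot(-K_X)\bigr)=(-K_X)^2=d$; since each $D_i$ is a curve and $-K_X$ is ample and Cartier, $D_i\cdot(-K_X)$ is a positive integer, hence $\sum d_i\le\sum d_i\,\bigl(D_i\cdot(-K_X)\bigr)=d$. Taking the supremum over $D$ yields $\sigma(X)\le d$; this is the argument that reappears in Lemma~\ref{kleiman}.

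For the reverse inequality $\sigma(X)\ge d$ one exhibits, on each smooth del Pezzo surface of degree $d\le 5$, a reduced anticanonical divisor $D_0$ whose support is a nodal configuration of $d$ smooth rational curves, hence an snc (in particular log-canonical) boundary with all coefficients $1$ and $\sum d_i=d$. Concretely: for $d=5$ take a pentagon of $(-1)$-curves in $X_5$, for instance the classes $E_1$, $H-E_1-E_2$, $E_2$, $H-E_2-E_3$, $H-E_1-E_4$, which meet cyclically and transversally and sum to $-K_X=3H-E_1-\dots-E_4$; for $d=4$ take the square formed by the conic $2H-E_1-\dots-E_5$, the curve $E_1$, the line $H-E_1-E_2$, and $E_2$, whose sum is $-K_X$; for $d=3$ take the three lines cut out by a tritangent plane that is not an Eckardt configuration (one exists, since there are $45$ tritangent planes and at most $18$ Eckardt points); for $d=2$ take the union of the strict transforms of a conic through five of the seven blown-up points and a line through the remaining two, which meet at two nodes; and for $d=1$ take a general smooth elliptic member of $|-K_X|$ with coefficient $1$. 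In each case, when the blown-up points are in general position the prescribed classes are represented by irreducible curves meeting transversally at distinct points, so $D_0$ is snc, $(X,D_0)$ is log-canonical, and $\sigma(X)\ge\sum d_i=d$. Combining the two bounds gives $\sigma(X)=d$, hence $\gamma(X)=2(6-d)$ for $1\le d\le 5$, which together with the toric cases finishes the proof.

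I expect the only genuine work to be the last step: verifying that the required cycles of $(-1)$-curves (the pentagon, the square, a non-Eckardt triangle of lines, the $I_2$-configuration of a conic and a line) really occur on \emph{every} smooth del Pezzo surface of the relevant degree and are honestly nodal, so that the resulting boundary is log-canonical with the maximal possible value of $\sum d_i$. The toric range and the intersection-theoretic upper bound are immediate.
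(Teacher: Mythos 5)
Your skeleton is the same as the paper's: handle $d\ge 6$ by toricity, and for $d\le 5$ squeeze $\sigma(X)$ between the intersection-theoretic upper bound $\sigma(X)\le d$ (intersect the boundary with the ample divisor $-K_X$; this is exactly the paper's first estimate and the argument of Lemma~\ref{kleiman}) and an explicit boundary attaining $\sum d_i=d$. Where you genuinely diverge is in the extremal boundary: the paper takes the symmetric divisor $D=\frac{d}{N(d)}\sum E_i$ over \emph{all} $(-1)$-curves and quotes Cheltsov's log canonical threshold computations for log canonicity of $(X,D)$ (plus a smooth anticanonical elliptic curve for $d=1$), whereas you build reduced nodal anticanonical cycles with all coefficients equal to $1$. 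Your route is more self-contained when it works: for $d=5$ and $d=4$ the pentagon and the square you write down are automatically snc on every such surface (the needed transversalities follow from no three of the blown-up points being collinear and from the conic being irreducible), and for $d=3$ your count ($45$ tritangent planes versus at most $18$ Eckardt points) correctly produces a triangle of lines, so in those degrees you avoid the external lct citation that the paper relies on. The paper's choice, on the other hand, works uniformly in $2\le d\le 5$ without any case-by-case geometry. For $d\ge 7$ you invoke the remark that toric implies $\gamma=0$; the paper additionally reproves $\gamma(X)\ge 0$ by intersecting with the pullback of a line, but citing the remark is legitimate since it is stated in the paper.

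The genuine gap is at $d=2$. There you take the strict transforms of a conic through five of the seven points and the line through the other two, and you justify nodality only ``when the blown-up points are in general position.'' General position in the del Pezzo sense (no three collinear, no six on a conic) does not forbid the line $p_6p_7$ from being tangent to the conic through $p_1,\dots,p_5$; on such a (still smooth) degree~$2$ surface your two curves meet at a single point with multiplicity $2$, the divisor acquires a tacnode, and the pair with both coefficients equal to $1$ is \emph{not} log canonical (the lct of that singularity is $\frac34$), so the claimed bound $\sigma(X)\ge 2$ is not established for those surfaces. You flag this as ``the only genuine work,'' but it is exactly the step that needs an argument, and it does not follow from genericity because the theorem is about every smooth del Pezzo surface of degree $2$. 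It is repairable: in the double-cover model the pairs $E+E'\sim -K_X$ of $(-1)$-curves correspond to the $28$ bitangents of the branch quartic, a pair is tangent precisely when the bitangent is a hyperflex line, and a smooth quartic has at most $12$ hyperflexes, so some pair meets in two distinct nodes and gives the required snc boundary with $\sum d_i=2$; alternatively one can simply fall back on the paper's divisor $\frac{2}{56}\sum E_i$ with Cheltsov's result. With that degree~$2$ case closed, your proof is complete and matches the stated values.
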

\begin{remark}
It is well known that all smooth del Pezzo surfaces with $d\ge 6$ are toric, and so $\gamma(X)=0$ in this case. 
\end{remark}
\begin{proof}

Let $D =\sum d_iD_i$ such that $K_X+D\equiv 0$.
For all prime divisors $D_i$  on $X$ we have
    $-K_Y\cdot D_i \ge 1$,
    hence
    \begin{equation*}
     d = K_X^2 = -K_Y\cdot D = -K_Y \cdot \sum d_iD_i \ge \sum d_i.
    \end{equation*}
So, $\sigma(X)\le d$ and hence, 
\begin{equation*}
    \gamma(X)=12-d-\sigma(X)\ge 2(6-d).
\end{equation*}

Note that unless $X \cong \mathbb{P}^1 \times \mathbb{P}^1$, we have
 $\phi \colon X \to \mathbb{P}^2$ is a blow-up of some number of points on $\mathbb{P}^2$.
Denote by $E_i$ the exceptional divisors of a blow-up and by $H$ the strict transform of a general line on $\mathbb{P}^2$. Then
\begin{equation*}
    \mathrm{Pic}(X) \cong \mathbb{Z}[H]\oplus \mathbb{Z}[E_1] \oplus ...\oplus \mathbb{Z}[E_n]
\end{equation*}
where $n = \rho(X) - 1$.
For a prime divisor $D_i$, if $D_i\cdot H=0$, then $D_i = E_j$ for some $1 \le j \le n$.
Then 
\begin{equation*}
    -K_X \equiv D = \sum d_iD_i + \sum e_jE_j
\end{equation*} where prime divisors $D_i$ satisfy $H\cdot D_i \ge 1$.
Then 
\begin{equation*}
    3 = H\cdot D \ge \sum d_i
\end{equation*}
and
\begin{equation*}
    \sum d_i + \sum e_j \le 3 + \rho(X)-1 = \mathrm{dim}X+\rho(X)=12-d.
\end{equation*}
So $\sigma(X) \le 12-d $ and  $\gamma(X)=12-d-\sigma(X) \ge 0$.
Finally, we have $\gamma(X) \ge \mathrm{max}(0,2(6-d))$.

For $X \cong \mathbb{P}^1\times \mathbb{P}^1$, we have
$-K_X \sim 2H$ 
where $H = H_1+H_2$ -- is an ample Cartier divisor, $H_1$ and $H_2$ are divisors of lines isomorphic to $\mathbb{P}^1\times \lbrace p \rbrace $ and $\lbrace q \rbrace\times \mathbb{P}^1$, respectively.
Then 
\begin{equation*}
 8 = K_X^2 = -K_X\cdot D= 2H\cdot \sum d_iD_i \ge 2\sum d_i 
\end{equation*}
and 
\begin{equation*}
    \sum d_i \le 4 = \mathrm{dim}X+\rho(X).
\end{equation*}

Now we show that this estimate is sharp by presenting for each $X$ a divisor $-K_X \equiv D$ which has maximal sum of coefficients.
For $d \ge 6$, one can pick the sum of toric divisors  on~$X$. 
For $d \le 5$ and for  $X \cong Bl_{p_1,p_2, ... p_n}\mathbb{P}^2$, $d = 9 - n$, $3 \le  n \le 7$ pick 
\begin{equation*}
D = \frac{d}{N(d)}\sum E_i
\end{equation*}
where the sum is taken among the set of $(-1)$-curves $E_i$ and  $N(d)$ is a number of $(-1)$-curves on a surface.
Recall that 
\begin{equation*}
    N(6)=6,\quad N(5)=10,\quad N(4)=16,\quad N(3)=27, \quad N(2)=56.
\end{equation*}
Corresponding pair $(X,D)$ is log-canonical, cf. \cite{Cheltsov}.
When $X \cong Bl_{p_1,...p_8}(\mathbb{P}^2)$, take $D = C$ where $C$ is a smooth elliptic curve in $|-K_X|$.
\end{proof}





\section{Geometry of weak del Pezzo surfaces}
In this section we recall the geometrical properties of smooth weak del Pezzo surfaces which will be applied for the computations for du Val del Pezzo surfaces. We will use the following notation.

 A smooth projective surface $X$ has du Val singularities if the exceptional divisor of the minimal resolution $f\colon Y \to X$ consists of $(-2)$-curves, i.e., smooth rational curves $A_j$ which satisfy $A_j^2=-2$. The dual graph of the resolution is a Dynkin diagram of type $\mathbb{A,D,E}$. Recall  that a smooth projective surface $Y$ is a \textit{weak del Pezzo surface} if $-K_Y$ is nef and big. 
 Suppose that $f\colon Y \to X$ is the minimal resolution of a du Val del Pezzo surface $X$. Then $Y$ is a weak del Pezzo surface.
Classification of smooth weak del Pezzo surfaces is given in \cite{Watanabe}.
The $\textit{degree}$ of a del Pezzo surface $X$ is an integer number $d=K_X^2$ and $d\in \lbrace 1,\dots,9 \rbrace$. Because $f$ is $\textit{crepant}$, i.e., $K_Y=f^*K_X$, we have $K_Y^2=K_X^2=d$.
Unless $Y \cong \mathbb{F}_2$ (this holds if and only if  $Y$ is the minimal resolution of the cone over a conic), $Y$ is a blow-up of $\mathbb{P}^2$ in some number of points in almost general position \cite{Watanabe}, i.e, 
there is a sequence of blow-ups
\begin{equation*}
Y = Y_n \to Y_{n-1} \to \dotsb \to Y_1 \to Y_0 \cong \mathbb{P}^2,    
\end{equation*}
 where the point of $i$-th blow-up $p_i$ does not belong to a $(-2)$-curve of $Y_{i-1}$ and $n=9-d$.

We introduce some notation which is used in the paper.

\begin{defi}\label{D(Y)}
    
Denote by $D(Y)$ the finite set of all curves on $Y$ with negative self-intersection.
Note that $D(Y)=A(Y)\cup E(Y)$ where $A(Y)$ and $E(Y)$ are sets of $(-2)$-curves $A_i$ on $Y$ and $(-1)$-curves $E_i$ on $Y$, respectively. Recall that  $f$ contracts $A_i \in A(Y)$ to singular points on $X$. 
\end{defi}

Note that $X$ is $\mathbb{Q}$-factorial \cite{Watanabe}, that is, any Weil divisor is a $\mathbb{Q}$-Cartier divisor.
For any Weil divisor $D$ on $X$ we have $f^*(D) = f_*^{-1}(D) + \sum a_iA_i$ where $a_i \in \mathbb{Q}$ and $f_*^{-1}(D)$ is a strict transform of $D$.
Because $f$ is crepant, the pair $(X,D)$ is log-canonical if and only if $(Y,f^*(D))$ is log-canonical.

\begin{defi}\label{sigma_prime}
    
For a weak del Pezzo surface $Y$ denote 
\begin{equation}
\sigma'(Y) = \mathrm{\sup_{D}}\lbrace \sum d_i \colon  K_Y+D \equiv 0 \rbrace ,
\end{equation}
where the supremum is taken among such $D$ that 
\begin{equation*}
    D=\sum  d_iD_i + \sum a_jA_j, \quad D_i^2\ge -1, \quad A_i \in A(Y), \quad 0<d_i,a_j\le 1, \quad (Y,D) \text{ is log-canonical}.
\end{equation*}

\end{defi}
\begin{remark}
    If $f\colon Y \to X$ is the minimal resolution of $X$, then $\sigma(X)=\sigma'(Y)$.
\end{remark}

\begin{prop}\label{MoriCone}
    When $-K_Y^2 = d \le 7$, the Mori Cone $\mathrm{\overline{NE(Y)}}$ is spanned by $(-1)$-curves and $(-2)$-curves on $Y$. 
\end{prop}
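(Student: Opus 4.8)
The plan is to invoke the Cone Theorem together with the classification of extremal contractions of a smooth projective surface. Set $P := \sum_{C \in D(Y)} \mathbb{R}_{\ge 0}[C]$. Since $D(Y)$ is finite (Definition \ref{D(Y)}), $P$ is a rational polyhedral subcone of $\overline{NE}(Y)$, and it suffices to prove the reverse inclusion $\overline{NE}(Y) \subseteq P$.

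First I would treat the $K_Y$-trivial part of the cone. Because $Y$ is a weak del Pezzo surface, $-K_Y$ is semiample and the anticanonical morphism is the crepant contraction $f\colon Y \to X$ to the du Val del Pezzo surface $X$; hence $K_Y^{\perp}\cap\overline{NE}(Y)$ is exactly the relative cone $\overline{NE}(Y/X)$, which is spanned by the curves contracted by $f$, i.e.\ by the $(-2)$-curves $A(Y)$. Equivalently: if $Z=\sum n_iC_i$ is an effective curve with $-K_Y\cdot Z=0$, then nefness of $-K_Y$ forces $-K_Y\cdot C_i=0$ for every $i$, and then adjunction together with the Hodge index theorem, using $(-K_Y)^2=d>0$, gives $C_i^2=-2$, so $C_i\in A(Y)$. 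In either form this face lies in $P$.

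Next, by the Cone Theorem $\overline{NE}(Y)=\bigl(K_Y^{\perp}\cap\overline{NE}(Y)\bigr)+\sum_j \mathbb{R}_{\ge 0}[\ell_j]$, where the $R_j=\mathbb{R}_{\ge 0}[\ell_j]$ are the finitely many $K_Y$-negative extremal rays, each carrying an extremal contraction $\varphi_j\colon Y\to Z_j$. I would then run through the trichotomy for $\dim Z_j$. If $\dim Z_j=0$ then $\rho(Y)=1$, so $Y\cong\mathbb{P}^2$ and $d=9$, contradicting $d\le 7$, so this case does not occur. If $\dim Z_j=2$, then $\varphi_j$ contracts a single $(-1)$-curve $E$, so $R_j=\mathbb{R}_{\ge 0}[E]\subseteq P$. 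If $\dim Z_j=1$, then $Z_j\cong\mathbb{P}^1$ and $\varphi_j$ is a conic bundle; since $d\le 7$ gives $\rho(Y)=10-d\ge 3>2$, this conic bundle has at least one reducible fibre $F_0$, and every irreducible component $C$ of $F_0$ satisfies $C^2<0$ (the intersection matrix of the components of a connected reducible fibre is negative semidefinite with the fibre class spanning its kernel), hence $C\in D(Y)$; as all fibres are numerically equivalent, $[\ell_j]\in\mathbb{R}_{\ge 0}[F_0]\subseteq P$. In every case $R_j\subseteq P$, so $\overline{NE}(Y)\subseteq P$, which finishes the argument.

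The \textbf{main obstacle} is the conic bundle case: one must check that such a contraction genuinely has a reducible fibre --- this is exactly where the hypothesis $d\le 7$, i.e.\ $\rho(Y)\ge 3$, is used (besides ruling out $Y\cong\mathbb{P}^2$) --- and that the components of that fibre are honest negative curves, not, say, curves of self-intersection $0$. Once the Cone Theorem and the surface contraction trichotomy are granted, the remaining verifications are routine.
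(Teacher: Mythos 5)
Your proposal is correct and follows essentially the same route as the paper: invoke the Cone Theorem, identify the $K_Y$-trivial face with the cone spanned by the $(-2)$-curves contracted by the crepant morphism $f\colon Y\to X$, and show every $K_Y$-negative extremal ray is spanned by a curve of $D(Y)$ (the paper simply asserts these rays are generated by $(-1)$-curves, while you justify it via the contraction trichotomy and $d\le 7\Rightarrow\rho(Y)\ge 3$). One small simplification: in the case $\dim Z_j=1$ you need not hunt for a reducible fibre at all, since an extremal contraction onto a curve has $\rho(Y/Z_j)=1$ and hence $\rho(Y)=2$, contradicting $\rho(Y)\ge 3$, so that case is vacuous --- though your detour through Zariski's lemma still lands the ray inside the cone spanned by $D(Y)$ and does no harm.
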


\begin{proof}
    From the Cone theorem \cite{Kollar-Mori} follows that
    \begin{equation*}
        \overline{NE(Y)} = \overline{NE(Y)_{K_Y=0}} + \sum_{i}\textbf{R}_{\ge 0}[E_i], \quad E_i\in E(Y).
    \end{equation*} Note that if $-K_Y\cdot C =0$, then $C^2 < 0$ and $supp(C)$ consists of $(-2)$-curves. Hence, $\overline{NE(Y)_{K_Y=0}}$ is spanned by curves from $A(Y)$, which is a finite set.
\end{proof}











\begin{lem} \label{lem34}
Suppose that $d\ge 3$.

    \textbf{i)} Suppose that an effective divisor $L$ on $Y$  satisfies $L^2=0$ and $-K_Y\cdot L =2$. Then
    \begin{equation*}
     h^0(Y,\mathcal{O}_{Y}(L))=2.
    \end{equation*}


    
    \textbf{ii)} Suppose a $(-1)$-curve $E$ satisfies $E\cdot L = 0$ and $L$ satisfies the conditions of i). Then 
   \begin{equation*}
    h^0(Y, \mathcal{O}_{Y}(L-E))=1.
   \end{equation*}

    \textbf{iii)} Suppose that $L$ from i) has a fixed component $F$ and a movable component $M$ such that $F = L - M$. 
    Then $M^2=0$ and $-K_Y\cdot L = 2$ and hence, i) holds for the effective divisor $M$. More over, $\mathrm{supp}(F)$ consists of $(-2)$-curves on $Y$ and $M$ is base point free.
    
    \textbf{iv)} Suppose that $L$ is from i) and $L$ is base point free. Then a general member of $L$ is a curve $C$ such that $C \cong \mathbb{P}^1$ and $L$ admits a decomposition
    \begin{equation*}
        L \sim \sum_{i}e_jE_i + \sum_{j}a_jA_j, \quad 0 < e_i,a_j \in \mathbb{Z}, \quad E_i \in E(Y), \quad A_j \in A(Y).
    \end{equation*} Possible decompositions are described in the Table 1.

\end{lem}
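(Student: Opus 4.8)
The plan is to handle i) and ii) by Riemann--Roch together with a vanishing argument on a general anticanonical curve, iii) by an elementary analysis of the fixed/movable decomposition using parity of Euler characteristics and the Hodge index theorem, and iv) by adjunction plus the classification of degenerate fibres of the conic bundle defined by $|L|$. Throughout I use that $Y$ is rational, so $\chi(\mathcal O_Y)=1$ and $h^1(\mathcal O_Y)=0$, and that every irreducible curve of negative self-intersection on $Y$ lies in $D(Y)=A(Y)\cup E(Y)$.

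For i), Riemann--Roch gives $\chi(\mathcal O_Y(L))=1+\tfrac12(L^2-K_Y\cdot L)=2$. Since $-K_Y$ is nef and $(K_Y-L)\cdot(-K_Y)=-d-2<0$, the divisor $K_Y-L$ is not effective, so $h^2(L)=h^0(K_Y-L)=0$. For $h^1(L)=0$ I take a general $D\in|{-K_Y}|$, which for $d\ge3$ is a smooth curve of arithmetic genus $1$ (using base point freeness of $|{-K_Y}|$), and restrict along
\[
0\to\mathcal O_Y(K_Y+L)\to\mathcal O_Y(L)\to\mathcal O_D(L|_D)\to0 .
\]
Here $\deg(L|_D)=L\cdot(-K_Y)=2>0$ forces $h^1(\mathcal O_D(L|_D))=0$, while $h^1(\mathcal O_Y(K_Y+L))=0$ because $\chi(K_Y+L)=0$, $h^0(K_Y+L)=0$ (as $(K_Y+L)\cdot(-K_Y)=2-d<0$) and $h^2(K_Y+L)=h^0(-L)=0$. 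The long exact sequence then yields $h^1(L)=0$, hence $h^0(L)=2$. Part ii) runs on exactly the same engine with $L$ replaced by $L-E$: one computes $(L-E)^2=-1$, $-K_Y\cdot(L-E)=1$, so $\chi(L-E)=1$; the non-effectivity of $K_Y-L+E$, of $K_Y+L-E$ and of $E-L$ (each meets $-K_Y$ negatively for $d\ge3$), combined with $\deg((L-E)|_D)=1>0$, give $h^1(L-E)=h^2(L-E)=0$, hence $h^0(L-E)=1$.

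For iii), write $|L|=F+|M|$ with $F$ the fixed part and $M$ the movable part, so $h^0(M)=h^0(L)=2$ and $|M|$ is a fixed-component-free pencil; in particular $M^2\ge0$ and $M\cdot F\ge0$. Since $M\ne0$ is effective and $-K_Y$ is nef we have $-K_Y\cdot M\ge1$ (if $-K_Y\cdot M=0$ then $\operatorname{supp}M\subseteq A(Y)$ and $M^2<0$ by negative definiteness), and $-K_Y\cdot M\le-K_Y\cdot L=2$. Suppose $M^2\ge1$. Riemann--Roch forces $M^2\equiv-K_Y\cdot M\pmod2$, and the Hodge index theorem gives $d\cdot M^2\le(-K_Y\cdot M)^2$; the case $-K_Y\cdot M=1$ (then $M^2$ odd, so $d\cdot M^2\le1$) and the case $-K_Y\cdot M=2$ (then $M^2$ even $\ge2$, so $d\cdot M^2\le4$) both contradict $d\ge3$. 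Hence $M^2=0$, so $L^2=0$ gives $2M\cdot F+F^2=0$; the same parity/nefness argument forces $-K_Y\cdot M=2$, whence $-K_Y\cdot F=0$ and $\operatorname{supp}F\subseteq A(Y)$. Finally i) applies to the effective divisor $M$, and a fixed-component-free pencil with $M^2=0$ has empty base locus, so $M$ is base point free.

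For iv), $L$ is base point free, so a general $C\in|L|$ is smooth by Bertini; it is connected since $h^0(\mathcal O_C)=1+h^1(\mathcal O_Y(K_Y+L))=1$ by Serre duality and the vanishing established in i), hence irreducible, and adjunction $2p_a(C)-2=C^2+K_Y\cdot C=-2$ gives $C\cong\mathbb P^1$. Thus $\varphi_{|L|}\colon Y\to\mathbb P^1$ is a conic bundle. Whenever $\rho(Y)\ge3$ (that is, $d\le7$) it has a reducible fibre $F_0\sim L$; by Zariski's lemma each component $\Gamma$ of $F_0$ satisfies $\Gamma^2<0$, hence $\Gamma\in D(Y)=E(Y)\cup A(Y)$, and writing $F_0=\sum e_iE_i+\sum a_jA_j$ the relations $\sum e_i=-K_Y\cdot F_0=2$, $F_0^2=0$, $F_0\cdot E_i=F_0\cdot A_j=0$, together with connectedness of $F_0$ and the ADE structure of the $A_j$'s, cut the possibilities down to the finite list in Table 1; the remaining $d=8$ cases ($Y\cong\mathbb F_0,\mathbb F_1,\mathbb F_2$) are checked directly. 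I expect the main effort to be precisely this last enumeration filling in Table 1, with a secondary point of care being the fact that the general anticanonical curve is smooth of genus $1$ for $d\ge3$.
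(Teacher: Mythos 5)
Your argument is correct, and in parts it takes a genuinely different (and cleaner) route than the paper. Part i) is essentially the paper's proof: both restrict to a smooth anticanonical elliptic curve and use $h^0(K_Y+L)=h^1(K_Y+L)=h^2(K_Y+L)=0$. In ii) you diverge: the paper only gets $h^0(L-E)\ge 1$ from Riemann--Roch and then excludes $h^0=2$ by a rather involved analysis (no movable part, then a connectedness count using negative definiteness on the $(-2)$-lattice), whereas you run the same anticanonical-restriction sequence for $L-E$, kill $h^1$ and $h^2$ by the three non-effectivity checks, and read off $h^0=\chi=1$ in one stroke; this is simpler and complete. In iii) the paper forces $M^2=0$, $-K_Y\cdot M=2$ by Riemann--Roch/vanishing computations, while you use the parity of $M^2+K_Y\cdot M$ together with the Hodge-index inequality $d\,M^2\le (K_Y\cdot M)^2$; both work, and you additionally justify the base-point-freeness of $M$ (a fixed-component-free pencil with $M^2=0$), which the paper asserts but never proves. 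In iv) the paper decomposes $L$ in the Mori cone (Proposition 3.5, valid for $d\le 7$), upgrades to an integral effective decomposition via ii), and contracts to $\mathbb{F}_1$; you instead use the genus-zero fibration $\varphi_{|L|}$ and Zariski's lemma on a reducible fibre, which exists exactly when $\rho(Y)\ge 3$, i.e. $d\le 7$ --- an equally valid route, and both you and the paper leave the actual enumeration of Table 1 at the level of a sketch. One caveat: your remark that the $d=8$ cases are ``checked directly'' cannot be carried out as stated, since for $Y\cong\mathbb{P}^1\times\mathbb{P}^1$ (or $\mathbb{F}_1$, $\mathbb{F}_2$) a ruling fibre satisfies the hypotheses of iv) but admits no decomposition into $(-1)$- and $(-2)$-curves at all; this is really a defect of the lemma's statement, and the paper's own proof carries the same implicit restriction to $d\le 7$ through its use of Proposition 3.5, so it does not undermine your argument in the range where the lemma is actually applied.
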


\begin{proof}

    \textbf{i)} Note that $-K_Y\cdot (K_Y+L) = 2-d < 0$ and, because $-K_Y$ is nef, we have $h^0(Y,\mathcal{O}(K_Y+L))=0$. Also 
    \begin{equation*}
        h^2(Y,\mathcal{O}(K_Y+L))=h^0(Y,\mathcal{O}(-L))=0.
    \end{equation*}
    By Riemann-Roch theorem, we have
    \begin{equation*}
        h^1(Y,\mathcal{O}(K_Y+L))= -(1 + \frac{1}{2}L\cdot(K_Y+L))=0.
    \end{equation*}
Then from the  short exact sequence
    \begin{equation*}
        0 \longrightarrow \mathcal{O}_Y(K_Y+L) \longrightarrow \mathcal{O}_Y(L) \longrightarrow    \mathcal{O}_C(L_{|C}) \longrightarrow 0,
    \end{equation*}
    where $C \sim -K_Y$ is a non-singular elliptic curve, follows that the sequence 
\begin{equation*}
        0 \longrightarrow H^0(Y,\mathcal{O}_Y(K_Y+L)) \longrightarrow H^0(Y,\mathcal{O}_Y(L)) \longrightarrow    H^0(C,\mathcal{O}_C(L_{|C})) \longrightarrow 0
    \end{equation*}
is also exact. Then
\begin{equation*}
    h^0(Y,\mathcal{O}_Y(L)) = h^0(\mathcal{O}_C(L_{|C})) = L\cdot C
\end{equation*},
where the last equality follows from Riemann-Roch for curves and $g(C)=1$. Hence, $h^0(Y,\mathcal{O}_Y(L)) = 2$.

    \textbf{ii)} Because $L\cdot E = 0$, we have $(L-E)^2 = -1$. We have by Serre duality
    \begin{equation*}
        h^2(Y,\mathcal{O}_Y(L-E)) =h^0(Y,\mathcal{O}_Y(K_Y+E-L))=0.
    \end{equation*}
    By Riemann-Roch we deduce that 
    \begin{equation*}
        h^0(Y,\mathcal{O}_Y(L-E)) \ge 1+\frac{1}{2}(L-E)(L-E-K_Y)=1.
    \end{equation*} Suppose that $h^0(Y,\mathcal{O}_Y(L-E)) = 2$. We prove that $L-E$ does not have a movable component. Indeed, suppose that there is a divisor $F+M \in |L-E|$ such that $M$ is movable.
    We have  $-K_Y \cdot M \neq 0$ because otherwise $\mathrm{supp}(M)$ consists of $(-2)$-curves. Then
    $1 \le -K_Y\cdot M \le -K_Y \cdot L = 1$ because $-K_Y$ is nef. Because $d \ge 2$, we have $ -K_Y\cdot(M+K_Y) = 1 - d < 0$, so $h^0(Y,\mathcal{O}_Y(M+K_Y)) = 0$.
    By Riemann-Roch theorem, we have
    \begin{equation*}
       - h^1(Y,\mathcal{O}_Y(M+K_Y))  = 1 + \frac{1}{2}(M+K_Y)\cdot M = 1 + \frac{1}{2}(M^2 - 1).
    \end{equation*}
    Then $M^2 < 0$ which is a contradiction. So, $L-E$ does not have movable components. Then $h^0(Y,\mathcal{O}_Y(L-E))$ coincides with the number of connected components in $L-E$, which equals to $1$. Indeed, only one connected component $C$ from $\mathrm{supp}(L-E)$ satisfies $-K_Y \cdot C = 1$ and $C^2 = -1$, the other components $D$ may consist only of $(-2)$-curves and
    \begin{equation*}
    -1 = (L-E)^2 = (C+D)^2 = C^2 + D^2 \le -1,    
    \end{equation*}
     so $D^2 = 0$ and $D = 0$, because the intersection form is negative-definite on $(-2)$-curves on $Y$. 
    
    \textbf{iii)} Note that $-K_Y \cdot M > 0 $ because if $-K_Y \cdot M = 0$, then $\mathrm{supp}(M)$ consists of $(-2)$-curves and $M^2 < 0$. We have $2 = h^0(Y,\mathcal{O}_{Y}(L))\ge h^0(Y,\mathcal{O}_{Y}(M))\ge 2$ because $M$ is a movable divisor. Hence, $h^0(Y,\mathcal{O}_{Y}(M)) = 2$. From Riemann-Roch theorem follows that
    \begin{equation*}
         2 - h^1(Y,\mathcal{O}_{Y}(M)) = 1 + \frac{1}{2}(M(M-K_Y)) \ge 2,
    \end{equation*}
    because $-K_Y\cdot M >0$ and $M^2 \ge 0$. Then $M^2 -K_Y\cdot M =2$. Suppose that $-K_Y\cdot M = 1$. Then $h^0(Y,\mathcal{O}_Y(K_Y+M))=0$ because $-K_Y\cdot(K_Y+M) = 1 - d < 0$. From that follows that 
    \begin{equation*}
        -h^1(Y,\mathcal{O}_Y(K_Y+M)) = 1 + \frac{1}{2}M\cdot(K_Y+M) = 1,
    \end{equation*}
    which is a contradiction. Hence, $-K_Y\cdot M = 2$ and $M^2=0$. We deduce that $-K_Y\cdot F = 0$ and, hence, $\mathrm{supp}(F)$ consists of $(-2)$-curves.

    \textbf{iv)} By proposition \ref{MoriCone}, one can write 
         $L = \sum e_iE_i + \sum a_jA_j$, where $e_i$ and $a_j$ are nonnegative rational numbers.
         Then $2 = -K_Y\cdot L = \sum e_i$ and at least one $(-1)$-curve $E$ is included in the decomposition. Then $L\cdot E = 0$, because $L$ is base point free. By ii), one can write
         \begin{equation*}
         L-E = \sum \Tilde{e_i}E_i + \sum \Tilde{a_j}A_j, \quad 0 \ge \tilde{e_i}, \Tilde{a_j} \in \mathbb{Z}.    
         \end{equation*}
          So, $L$ admits an integer decomposition.
         
         Now consider the contraction $\pi \colon Y \to Z$, which contracts $E$ to a point $p$. Then general members of $L$ and $\pi_*(L)$ are isomorphic because $L \cdot E = 0$, $(\pi_*L)^2=0$ and $-K_Z\cdot \pi_*(L)=2$.
         One can arrange a sequence of contractions to $Z = Bl_p{\mathbb{P}^2}\cong \mathbb{F}_1$ and deduce that if
         $-K_Z\cdot L = 2$ and $L^2=0$ and $L$ is base point free, then $L$ is a fiber and $L \cong \mathbb{P}^1$. 

         One can get all the possible integer decompositions by blowing up a fiber
         on $Bl_p{\mathbb{P}^2}$ and taking reducible members of the pullback.

\begin{center}
    \begin{tabular}{ | l | l | p{4cm}|}
    \hline
    Type & Dual graph & Decomposition of $L$ \\ \hline
    $1$
    &
    \begin{minipage}{0.3\textwidth}
\begin{center}
     
    \begin{tikzpicture}
        \node[circle,draw=black,scale=0.5](1) at (-3,0) {};
        \node[]() at (-3,0.5){$1$};
        \node[circle,draw=black,scale=0.5](2) at (-2,0) {};
        \node[]() at (-2,0.5){$1$};
    \path (1) edge (2);
    \end{tikzpicture}

\end{center}

\if 0
\begin{center}
    \begin{tikzpicture}
        \node[circle,draw=black,scale=0.5](1) at (-3,0) {};
        \node[]() at (-3,0.5){$1$};
        \node[circle,fill=black,scale=0.5](2) at (-2,0) {};
        \node[]() at (-2,0.5){$1$};
        \node[circle,draw=black,scale=0.5](3) at (-1,0) {};
        \node[]() at (-1,0.5){$1$};
    \path (1) edge (2);
    \path (2) edge (3);
    \end{tikzpicture}

   \end{center}

\begin{center}

    \begin{tikzpicture}
        \node[circle,draw=black,scale=0.5](1) at (-3,0) {};
        \node[]() at (-3,0.5){$1$};
        \node[circle,fill=black,scale=0.5](2) at (-2,0) {};
        \node[]() at (-2,0.5){$1$};
        \node[circle,fill=black,scale=0.5](3) at (-1,0) {};
        \node[]() at (-1,0.5){$1$};
        \node[circle,draw=black,scale=0.5](4) at (0,0) {};
        \node[]() at (0,0.5){$1$};
    \path (1) edge (2);
    \path (2) edge (3);
    \path (3) edge (4);
    \path (4) edge (5);
    \end{tikzpicture}
\end{center}
  \fi  
\begin{center}

    \begin{tikzpicture}
        \node[circle,draw=black,scale=0.5](1) at (-3,0) {};
        \node[]() at (-3,0.5){$1$};
        \node[circle,fill=black,scale=0.5](2) at (-2,0) {};

        \node[]() at (-2,0.5){$1$};
        \node[](3) at (-1,0) {$\dots$};
        \node[circle,fill=black,scale=0.5](4) at (0,0) {};
        \node[]() at (0,0.5){$1$};
        \node[circle,draw=black,scale=0.5](5) at (1,0) {};
        \node[]() at (1,0.5){$1$};
    \path (1) edge (2);
    \path (2) edge (3);
    \path (3) edge (4);
    \path (4) edge (5);
    \end{tikzpicture}
\end{center}
    \end{minipage}

    & $L\sim E_1+A_1+\dots +A_l+E_2; l \ge 0$ \\ \hline
    $2$
    &
    \begin{minipage}{0.3\textwidth}\begin{center}
    \begin{tikzpicture}
        \node[circle, fill=black,scale=0.5](1) at (-1,0) {};
        \node[circle, draw=black,scale=0.5](2) at (0,0) {};
        \node[circle, fill=black,scale=0.5](3) at (1,0) {};
        \node[]() at (-1,0.5) {$1$};
        \node[]() at (0,0.5) {$2$};
        \node[]() at (1,0.5) {$1$};
    \path (1) edge (2);
    \path (2) edge (3);
    \end{tikzpicture}
\end{center}
    \end{minipage}
    & $L\sim 2E_1+A_1+A_2$ \\ \hline
    $3$
    &
    \begin{minipage}{0.3\textwidth}
\if 0
    \begin{center}
      \begin{tikzpicture}
        \node[circle, draw=black,scale=0.5](1) at (1,0) {};
        \node[]() at (1,0.5){$2$};
        \node[circle,fill=black,scale=0.5](4) at (2,0) {};
        \node[]() at (2,0.5){$2$};
        \node[circle,fill=black,scale=0.5](5) at (3,0.5) {};
        \node[]() at (3,1){$1$};
        \node[circle,fill=black,scale=0.5](6) at (3,-0.5) {};
        \node[]() at (3,0){$1$};
    \path (1) edge (4);
    \path (4) edge (5);
    \path (4) edge (6);
    \end{tikzpicture}
    \end{center}
    
    \begin{center}
      \begin{tikzpicture}
        \node[circle, draw=black,scale=0.5](1) at (0,0) {};
        \node[]() at (0,0.5){$2$};
        \node[circle, fill=black,scale=0.5](2) at (1,0) {};
        \node[]() at (1,0.5){$2$};
        \node[circle,fill=black,scale=0.5](4) at (2,0) {};
        \node[]() at (2,0.5){$2$};
        \node[circle,fill=black,scale=0.5](5) at (3,0.5) {};
        \node[]() at (3,1){$1$};
        \node[circle,fill=black,scale=0.5](6) at (3,-0.5) {};
        \node[]() at (3,0){$1$};
    \path (1) edge (2);
    \path (2) edge (4);
    \path (4) edge (5);
    \path (4) edge (6);
    \end{tikzpicture}
    \end{center}

\fi
    
    \begin{center}
      \begin{tikzpicture}
        \node[circle, draw=black,scale=0.5](1) at (-1,0) {};
        \node[]() at (-1,0.5){$2$};
        \node[circle, fill=black,scale=0.5](2) at (0,0) {};
        \node[]() at (0,0.5){$2$};
        \node[](3) at (1,0) {$\dots$};
        \node[circle,fill=black,scale=0.5](4) at (2,0) {};
        \node[]() at (2,0.5){$2$};
        \node[circle,fill=black,scale=0.5](5) at (3,0.5) {};
        \node[]() at (3,1){$1$};
        \node[circle,fill=black,scale=0.5](6) at (3,-0.5) {};
        \node[]() at (3,0){$1$};
    \path (1) edge (2);
    \path (2) edge (3);
    \path (3) edge (4);
    \path (4) edge (5);
    \path (4) edge (6);
    \end{tikzpicture}
    \end{center}
    \end{minipage}
    & 
    $L\sim 2E_1+2A_1+2A_2+\dots+2A_l+A_{l+1}+A_{l+2}; l \ge 1$ \\ \hline
\end{tabular}

\end{center}

\end{proof}

\begin{remark}
    For $d = 2$ the previous lemma does not hold. For example, take $L = A + C$, where $A$ is a $(-2)$-curve and $C \sim -K_Y$ is a non-singular elliptic curve. Then $L^2=0$ and $-K_Y\cdot L =2$, but 
    \begin{equation*}
        h^0(Y,\mathcal{O}_{Y}(L)) \ge  h^0(Y,\mathcal{O}_{Y}(-K_Y)) = d + 1 = 3.
    \end{equation*}
\end{remark}

\begin{lem}\label{genus_zero}
    Suppose that $C$ is a smooth curve with $g(C)=0$ and an effective divisor $D$ such that $D\cdot C \ge 1$. Then for any point $p \in C$ there exists $L \in |D|$ such that $p \in L$.
\end{lem}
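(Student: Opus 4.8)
The plan is to restrict $\mathcal{O}_Y(D)$ to $C$ and then lift back a section that vanishes at $p$. Since $g(C)=0$, we have $C\cong\mathbb{P}^1$. First I would dispose of the case $C\subseteq\mathrm{Supp}(D)$: there $D$ itself, viewed as a member of $|D|$, already passes through $p$. So assume $C\not\subseteq\mathrm{Supp}(D)$; then $D|_C$ is an honest effective divisor on $C$ of degree $D\cdot C\ge 1$, so $\mathcal{O}_Y(D)|_C\cong\mathcal{O}_C(D|_C)$ is a line bundle of degree $\ge 1$ on $\mathbb{P}^1$ — in particular globally generated, with $H^1(C,\mathcal{O}_C(D|_C))=0$, and admitting a nonzero section $t$ with $t(p)=0$.

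Next I would use the restriction sequence
\[
0\longrightarrow\mathcal{O}_Y(D-C)\longrightarrow\mathcal{O}_Y(D)\longrightarrow\mathcal{O}_C(D|_C)\longrightarrow 0 .
\]
Passing to cohomology, the cokernel of the restriction map $H^0(Y,\mathcal{O}_Y(D))\to H^0(C,\mathcal{O}_C(D|_C))$ embeds into $H^1(Y,\mathcal{O}_Y(D-C))$. Once that restriction map is known to be surjective, I lift $t$ to a section $s\in H^0(Y,\mathcal{O}_Y(D))$ and take $L:=\mathrm{div}(s)\in|D|$; since $p\in C$ and $s|_C=t$ vanishes at $p$, we get $p\in\mathrm{Supp}(L)$. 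More cheaply, whenever $h^0(Y,\mathcal{O}_Y(D))\ge 2$ one can skip the sequence altogether: passage through $p$ is a single linear condition on the linear system $|D|$ of dimension $\ge 1$, so it carves out a nonempty subsystem. On a weak del Pezzo this already covers every nonzero nef $D$: Riemann--Roch, together with $D^2\ge 0$, $(-K_Y)\cdot D>0$ and $h^2(Y,\mathcal{O}_Y(D))=0$, forces $h^0(Y,\mathcal{O}_Y(D))\ge 2$, where $(-K_Y)\cdot D>0$ because $(-K_Y)\cdot D=0$ would make $\mathrm{Supp}(D)$ consist of $(-2)$-curves and hence give $D^2<0$.

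The one genuine point is therefore the vanishing $H^1(Y,\mathcal{O}_Y(D-C))=0$, equivalently the surjectivity of restriction; note that $H^1(C,\mathcal{O}_C(D|_C))=0$ already makes $H^1(Y,\mathcal{O}_Y(D-C))\to H^1(Y,\mathcal{O}_Y(D))$ surjective, so it suffices that the source vanish. This is where I expect the real work to lie. In the cases where the lemma gets used the ambient class $D$ can be taken nef and big — for instance the classes $L$ of Lemma \ref{lem34} — which puts $D-C$ into a range where a Kawamata--Viehweg/Mumford-type vanishing applies; failing that, one would fall back on a finite check against the list of negative curves on $Y$, in the style of the proof of Lemma \ref{lem34} and using that $-K_Y$ is nef.
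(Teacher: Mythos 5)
Your first half is exactly the paper's argument: the paper sets $\hat D=D|_C$, applies Riemann--Roch on $C\cong\mathbb{P}^1$ to get $h^0(C,\mathcal{O}_C(\hat D))\ge 2$, and concludes by saying that all points of $\mathbb{P}^1$ are linearly equivalent. What the paper never addresses is the step you correctly isolate as the crux: lifting a section of $\mathcal{O}_C(\hat D)$ vanishing at $p$ to a section of $\mathcal{O}_Y(D)$, i.e.\ the surjectivity of $H^0(Y,\mathcal{O}_Y(D))\to H^0(C,\mathcal{O}_C(D|_C))$. So your proposal is more careful than the paper's own proof on precisely the point the paper skips.

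Two remarks. You should not try to prove $H^1(Y,\mathcal{O}_Y(D-C))=0$ in the stated generality, because the statement as literally written is false: take $D=E$ a $(-1)$-curve meeting $C$ transversally in one point; then $|D|=\{E\}$ and no member of $|D|$ contains a point $p\in C\setminus E$. Some tacit hypothesis such as $h^0(Y,\mathcal{O}_Y(D))\ge 2$ is needed, and your ``cheap'' pencil argument (passing through $p$ is one linear condition on a system of dimension $\ge 1$) is then a complete proof; it also covers every place the lemma is actually invoked, since there $D$ is a class $L$ with $L^2=0$, $-K_Y\cdot L=2$ and $h^0(Y,\mathcal{O}_Y(L))=2$ by Lemma \ref{lem34}. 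In that form the hypotheses $g(C)=0$ and $D\cdot C\ge 1$ are not even used, and the Kawamata--Viehweg part of your sketch can simply be dropped.
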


\begin{proof}
    Let $\hat{D} = D_{|C}$. We have 
    \begin{equation*}
        \chi(\hat{D}) = D\cdot C + \chi(\mathcal{O}_C) = C\cdot D + 1,
    \end{equation*} so $h^0(C,\mathcal{O}_C(\hat{D})) \ge 2$, the statement then follows from the fact that all points on $\mathbb{P}^1 \cong C$ are linearly equivalent.
\end{proof}

The next lemma clarifies the structure of $(-1)$-curves on $Y$.
\begin{lem}\label{intersection_minus_one_curves}
    Suppose that $Y$ is a weak del Pezzo surface with $d = K_Y^2 $. Then, when $d \ge 3$, for any two  $(-1)$-curves $E_1, E_2 \in Exc(Y)$ we have $E_1\cdot E_2 \le 1$. If $d = 2$, then $E_1\cdot E_2 \le 2$ and if $E_1\cdot E_2 = 2$, then $E_1+E_2 \sim -K_Y$.
\end{lem}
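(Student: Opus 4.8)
The plan is to apply the Hodge index theorem to the effective divisor $L = E_1 + E_2$. Since $-K_Y$ is nef and big we have $(-K_Y)^2 = d > 0$, and that positivity is all that is needed: the inequality
\[
L^2 \cdot (-K_Y)^2 \le \bigl(L \cdot (-K_Y)\bigr)^2
\]
holds for any divisor class $L$, being a statement about the signature $(1,\rho(Y)-1)$ of the intersection form on $\mathrm{NS}(Y)\otimes\mathbb{Q}$, with equality precisely when $L$ is numerically proportional to $-K_Y$.

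First I would compute the two relevant intersection numbers. We may assume $E_1 \ne E_2$, as otherwise $E_1 \cdot E_2 = -1$ and there is nothing to prove. By adjunction every $(-1)$-curve $E$ satisfies $K_Y \cdot E = -1$, so $-K_Y \cdot L = 2$. Setting $m = E_1 \cdot E_2 \ge 0$ (non-negative since $E_1,E_2$ are distinct irreducible curves), we get $L^2 = E_1^2 + 2m + E_2^2 = 2m - 2$. Substituting into the Hodge index inequality yields $(2m-2)d \le 4$, i.e. $(m-1)d \le 2$. If $d \ge 3$ this forces $m - 1 \le 0$, hence $E_1 \cdot E_2 \le 1$; if $d = 2$ it gives $m \le 2$.

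It remains to treat the equality case $d = 2$, $m = 2$. Here $L^2 = 2$, so $L^2 \cdot (-K_Y)^2 = 4 = \bigl(L \cdot (-K_Y)\bigr)^2$, i.e. equality holds in the Hodge index inequality. Therefore $L$ is numerically proportional to $-K_Y$, and comparing $L \cdot (-K_Y) = 2 = (-K_Y)^2$ the proportionality constant is $1$, so $L \equiv -K_Y$. Since $Y$ is a smooth rational surface, $\mathrm{Pic}(Y) = \mathrm{NS}(Y)$ is torsion-free and numerical equivalence coincides with linear equivalence, hence $E_1 + E_2 \sim -K_Y$.

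The whole argument reduces to one Hodge index computation; the only points requiring a little care are the validity of the inequality for the merely nef-and-big class $-K_Y$ (which is fine because only $(-K_Y)^2>0$ is used) and the analysis of the equality case, which is exactly what produces the extra conclusion $E_1 + E_2 \sim -K_Y$ when $d = 2$.
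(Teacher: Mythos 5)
Your proof is correct, and it takes a genuinely different route from the paper's. The paper argues cohomologically: for $d\ge 3$ it notes that $-K_Y\cdot(E_1+E_2+K_Y)=2-d<0$ forces $h^0(Y,\mathcal{O}_Y(E_1+E_2+K_Y))=0$, and Riemann--Roch then gives $E_1\cdot E_2=1-h^1\le 1$; for $d=2$ it uses Riemann--Roch to produce an effective divisor in $|E_1+E_2+K_Y|$, observes that such a divisor must be supported on $(-2)$-curves since $-K_Y\cdot(E_1+E_2+K_Y)=0$, and concludes from the negative definiteness of the $(-2)$-curve lattice that $(E_1+E_2+K_Y)^2=2(E_1\cdot E_2-2)\le 0$, with the equality case yielding $E_1+E_2+K_Y\sim 0$. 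You instead run a purely numerical signature argument: the Hodge index inequality $L^2(-K_Y)^2\le\bigl(L\cdot(-K_Y)\bigr)^2$ applied to $L=E_1+E_2$ gives $(m-1)d\le 2$ at once, and the equality case of that inequality handles $d=2$, $m=2$, after which you pass from numerical to linear equivalence using that $Y$ is a rational surface with torsion-free Picard group (a step the paper gets for free from the $(-2)$-lattice argument, but which is standard and correctly justified by you). Your approach is shorter and avoids cohomology of line bundles entirely, only using $(-K_Y)^2>0$ rather than nefness; the paper's approach is more in line with the Riemann--Roch machinery it reuses elsewhere (e.g.\ Lemma \ref{lem34}) and exhibits the intermediate effective divisor supported on $(-2)$-curves, which is the kind of structural information exploited in later sections. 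Both proofs are complete.
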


\begin{proof}
   \textbf{Step 1.} Suppose that $d \ge 3$. Because $-K_Y\cdot (E_1+E_2+K_Y) = 2-d < 0$ and $-K_Y$ is nef, we have 
     $h^0(Y,\mathcal{O}_{Y}(E_1+E_2+K_Y))=0$. Then by Riemann-Roch theorem, we have
     \begin{equation*}
         -h^1(Y,\mathcal{O}_{Y}(E_1+E_2+K_Y))= 1 + \frac{1}{2}(E_1+E_2)\cdot (E_1+E_2+K_Y) = 1 +(E_1\cdot E_2 - 2).
     \end{equation*}
     So, 
     \begin{equation*}
         E_1\cdot E_2 = 1-h^1(Y,\mathcal{O}_{Y}(E_1+E_2+K_Y)).
     \end{equation*}

    \textbf{Step 2.} Suppose that $d=2$ and $E_1\cdot E_2 \ge 2$. Riemann-Roch theorem gives, as in the previous case,
    \begin{equation*}
        h^0(Y,\mathcal{O}_{Y}(E_1+E_2+K_Y))-h^1(Y,\mathcal{O}_{Y}(E_1+E_2+K_Y))= E_1\cdot E_2 - 1 \ge 1.
    \end{equation*}
    So, $h^0(Y,\mathcal{O}_{Y}(E_1+E_2+K_Y)) \ge 1$.
    Because $-K_Y\cdot (E_1+E_2+K_Y) = 2-d = 0$, we have the decomposition $E_1+E_2+K_Y = \sum a_iA_i$, where $A_i$ are $(-2)$-curves on $Y$. But 
    \begin{equation*}
        0 \ge (E_1+E_2+K_Y)^2 = 2(E_1\cdot E_2 - 2),
    \end{equation*} so  $E_1\cdot E_2 \le 2 $ and, once  $E_1\cdot E_2 = 2$, we have  $a_i = 0$ and $E_1+E_2+K_Y \sim 0$, because the self-intersection form on $(-2)$-curves is negative definite. 
\end{proof}

\begin{remark}\label{mult_remark}
    Suppose that $\pi \colon Z= Bl_pY\to Y$ is a blow-up of $Y$ in $p$, where $Y$ is a weak del Pezzo surface and $L \subset Y$ is a Weil divisor. Suppose that $\hat{L} \subset Z$ is a strict transform of $L$ and $E$ is an exceptional divisor of $\pi$. Then $\mathrm{mult}_pL = E\cdot \hat{L}$. This follows from the equations of a blow-up.
\end{remark}

\section {Computations for del Pezzo surfaces with du Val singularities with $D(Y)$ snc}

In this section we prove  Lemma \ref{exists} and Lemma \ref{induction_lemma}.
Everywhere below we denote by $f\colon Y \to X$ the minimal resolution of $X$. Recall that $Y$ is a smooth weak del Pezzo surface.
\begin{lem}\label{kleiman}
    $\sigma'(Y) \le d$.
\end{lem}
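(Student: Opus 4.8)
The plan is to use only that $-K_Y$ is nef and big (which holds because $Y$ is a weak del Pezzo surface) together with the numerical relation $D \equiv -K_Y$; log-canonicity of $(Y,D)$ will not be needed for this upper bound.

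First I would fix an admissible boundary $D = \sum d_i D_i + \sum a_j A_j$ with $K_Y + D \equiv 0$, so that $D \equiv -K_Y$ and therefore
\[
  (-K_Y)\cdot D = (-K_Y)^2 = K_Y^2 = d .
\]
Next I would estimate each summand of $(-K_Y)\cdot D$ from below. For a $(-2)$-curve $A_j$ adjunction gives $-K_Y\cdot A_j = 0$, so those terms vanish. For a prime divisor $D_i$ with $D_i^2 \ge -1$ I would show $-K_Y\cdot D_i \ge 1$: nefness gives $-K_Y\cdot D_i \ge 0$, and if equality held then, since $-K_Y$ is big, $(-K_Y)^2 = d > 0$, the Hodge index theorem would force $D_i^2 < 0$, while adjunction would give $D_i^2 = 2p_a(D_i) - 2 - K_Y\cdot D_i = 2p_a(D_i)-2$, hence $p_a(D_i)=0$ and $D_i^2 = -2$, contradicting $D_i^2 \ge -1$. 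So each $-K_Y\cdot D_i$ is a positive integer.

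Combining the two steps, $d = (-K_Y)\cdot D = \sum d_i\,(-K_Y\cdot D_i) \ge \sum d_i$, and taking the supremum over all admissible $D$ yields $\sigma'(Y) \le d$. I do not expect any genuine obstacle here: this is the same Kleiman-type estimate already used for smooth del Pezzo surfaces in Theorem~\ref{smooth}, and the only subtle point is excluding $-K_Y\cdot D_i = 0$, which is precisely where the hypothesis $D_i^2 \ge -1$ (no $(-2)$-curve is allowed among the $D_i$) enters.
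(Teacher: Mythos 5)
Your proposal is correct and follows essentially the same Kleiman-type estimate as the paper: intersect $D\equiv -K_Y$ with $-K_Y$, note $-K_Y\cdot A_j=0$ for the $(-2)$-curves and $-K_Y\cdot D_i\ge 1$ for the remaining components, and conclude $d\ge\sum d_i$. The only difference is that you spell out, via the Hodge index theorem and adjunction, why $-K_Y\cdot D_i=0$ would force $D_i$ to be a $(-2)$-curve, a fact the paper simply asserts.
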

\begin{proof}

We write 
\begin{equation*}
    -K_Y \equiv D = \sum d_iD_i + \sum a_jA_j, \quad A_j \in D(Y), \quad A_j^2=-2,\quad D_i \quad \text{are prime divisors}, \quad D_i^2>-2.
\end{equation*} We have
$-K_Y\cdot D_i \ge 1$, $-K_Y\cdot A_j=0$.
Hence, 
    \begin{equation*}
        d = K_Y^2 = -K_Y\cdot(\sum d_iD_i + \sum a_jA_j) \ge \sum d_i.
    \end{equation*}
From the Remark 3.2 follows that $\sigma(X) = \sigma'(Y) \le d$.
\end{proof}

\begin{lem}
    For $d=1$ we have $\sigma'(Y)=1$.
\end{lem}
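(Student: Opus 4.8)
The plan is to obtain the equality from two matching bounds. The inequality $\sigma'(Y)\le d=1$ is exactly Lemma \ref{kleiman}, so it remains to exhibit a single boundary divisor $D$ which is admissible in Definition \ref{sigma_prime}, satisfies $K_Y+D\equiv 0$, and has $\sum d_i=1$; then $\sigma'(Y)\ge 1$ and we are done.

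For the lower bound I would use the anticanonical pencil. Since $Y$ is a rational surface, $h^0(Y,\mathcal{O}_Y(-K_Y))=d+1=2$ (as recorded in the remark after Lemma \ref{lem34}, or by Riemann--Roch together with Kawamata--Viehweg vanishing applied to $K_Y+(-2K_Y)$, noting $-2K_Y$ is nef and big), so $|-K_Y|$ is a pencil. It is not base point free, since $(-K_Y)^2=1\neq 0$; comparing two distinct members $C_1,C_2$ one sees $\mathrm{Bs}|-K_Y|\subseteq C_1\cap C_2$, a set of at most $C_1\cdot C_2=1$ point, hence exactly one point. By Bertini a general member $C\in|-K_Y|$ is smooth away from that point, and the standard description of weak del Pezzo surfaces of degree one (cf.\ \cite{Prokhorov}; equivalently, blowing up the base point turns $|-K_Y|$ into the base point free anticanonical system defining an elliptic fibration) shows that a general $C$ is a smooth irreducible curve of arithmetic genus one.

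Now take $D=C$ for such a general member. Then $C\sim -K_Y$ gives $K_Y+D\sim 0\equiv 0$; $C$ is a prime divisor with $C^2=K_Y^2=1\ge -1$, so it enters Definition \ref{sigma_prime} as a $D_i$ rather than as a $(-2)$-curve; and the pair $(Y,C)$ is log smooth, hence log canonical, because both $Y$ and $C$ are smooth. Thus $D=C$ is admissible with $\sum d_i=1$, which gives $\sigma'(Y)\ge 1$, and combined with Lemma \ref{kleiman} this yields $\sigma'(Y)=1$. The only non-formal ingredient is the smoothness and irreducibility of a general anticanonical curve, which is the step I expect to treat most carefully; if one prefers to avoid it, one can instead take $D=\tfrac12 C_1+\tfrac12 C_2$ for two distinct members of the pencil, which meet transversally at the unique (reduced) base point, so that $(Y,D)$ is log canonical by a one blow-up discrepancy computation while still $\sum d_i=1$.
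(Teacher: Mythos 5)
Your proposal is correct and follows essentially the same route as the paper: the upper bound $\sigma'(Y)\le 1$ is Lemma \ref{kleiman}, and the lower bound comes from taking $D=C$ a non-singular elliptic member of $|-K_Y|$. The only difference is that the paper simply cites \cite{Watanabe} for the existence of such a curve, whereas you sketch its derivation via the anticanonical pencil (no fixed part, one base point, smooth general member); that extra argument is fine, but it is doing the work the paper delegates to the reference.
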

\begin{proof}
    It is known from \cite{Watanabe} that there is a non-singular elliptic curve $C$ in $|-K_Y|$, so in the previous lemma the equality holds.
    \end{proof}

    Everywhere below we assume that $d\ge 2$.


\begin{defi} \label{cycle_def}
   We say that the set of curves $I$  is a \emph{cycle} in $D(Y)$ if $I = \lbrace C_1, \dots C_m \rbrace $ with $m\ge 3$, $C_i \in D(Y)$ and satisfy 
   \begin{equation*}
       C_1\cdot C_2=C_2\cdot C_3 = \dots = C_{m-1}\cdot C_m = C_m\cdot C_1 = 1.
   \end{equation*} Also we call $I = \lbrace C_1,C_2 \rbrace \subset D(Y)$ a cycle if $C_1\cdot C_2 = 2$.
\end{defi}  

\begin{remark}
    The pair $(Y,C)$, where $C = \sum_{C_i \in I}C_i$, is not supposed to be log-canonical. For example, if $d=3$, one can take a cycle of $\lbrace C_1,C_2,C_3\rbrace$, where $C_i$ are lines which intersect in a point $p$. 
\end{remark}
\begin{defi}
   By the \emph{content} $c(I)$ of a cycle $I$  we mean the number of $(-1)$-curves in I.
\end{defi}
\begin{prop}
    $(\sum_{C_i \in I} C_i)^2 = -K_Y\cdot \sum_{C_i \in I} C_i=c(I)$.
\end{prop}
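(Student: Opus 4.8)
The plan is to prove both equalities by a single direct computation, expanding each side over the curves $C_i$ of the cycle and using adjunction together with the combinatorial shape of $I$. Write $C = \sum_{C_i \in I} C_i$. Every curve in $D(Y)$ is a smooth rational $(-1)$- or $(-2)$-curve, so adjunction gives $C_i^2 + K_Y \cdot C_i = -2$, that is $-K_Y \cdot C_i = C_i^2 + 2$, which is $1$ when $C_i$ is a $(-1)$-curve and $0$ when $C_i$ is a $(-2)$-curve.

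For the anticanonical degree I would just sum this over $i$: since by definition exactly $c(I)$ of the $C_i$ are $(-1)$-curves and the remaining $m - c(I)$ are $(-2)$-curves, $-K_Y \cdot C = \sum_i (-K_Y \cdot C_i) = c(I)$. For the self-intersection I would expand $C^2 = \sum_i C_i^2 + 2\sum_{i<j} C_i \cdot C_j$. The first term contributes $-1$ for each of the $c(I)$ $(-1)$-curves and $-2$ for each of the other $m - c(I)$ curves, hence equals $c(I) - 2m$. The second term is controlled by Definition \ref{cycle_def}: for $m \ge 3$ the only nonzero products are the $m$ adjacencies $C_1 \cdot C_2, \dots, C_{m-1} \cdot C_m, C_m \cdot C_1$, each equal to $1$, while for $m = 2$ there is the single product $C_1 \cdot C_2 = 2$; in both cases $2\sum_{i<j} C_i \cdot C_j = 2m$. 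Adding up gives $C^2 = (c(I) - 2m) + 2m = c(I) = -K_Y \cdot C$.

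This argument has no genuinely hard step; the only things needing care are bookkeeping. One must invoke that the members of $D(Y)$ are exactly the smooth rational $(-1)$- and $(-2)$-curves, so adjunction leaves no other value for $-K_Y \cdot C_i$. One must also be sure the cross-term total is exactly $2m$, which is precisely where the full cycle condition enters: for $m \ge 3$ the dual graph of $I$ is an $m$-cycle, so apart from the listed adjacencies all pairs $C_i, C_j$ are disjoint, and the degenerate length-two case must be folded into the same formula — which it is, since there $\sum_i C_i^2 = c(I) - 4 = c(I) - 2m$ as well. (It is also worth recording that two $(-2)$-curves cannot meet with multiplicity $2$, as the intersection form on $(-2)$-curves is negative definite, so in a length-two cycle $c(I) \in \{1,2\}$; this is not needed for the computation but pins down the shape of such cycles.)
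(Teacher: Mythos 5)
Your computation is correct and amounts to exactly the verification the paper leaves implicit --- its own proof is the one-liner ``it follows from the definition of the cycle,'' and your expansion via adjunction ($-K_Y\cdot C_i = C_i^2+2$) plus the cross-term count $2\sum_{i<j}C_i\cdot C_j = 2m$ is the intended bookkeeping, including the degenerate length-two case. The only point to flag is that you read the definition of a cycle as also requiring non-consecutive members to be disjoint; this is not spelled out in the paper's Definition 4.4 but is clearly the intended meaning, and the paper's tacit proof relies on it just as yours does.
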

\begin{proof}
    It follows from the definition of the cycle.
\end{proof}
\begin{lem}\label{exists}
     $c(I)\ge d$. Moreover, if $c(I)=d$, then

     \begin{equation*}
      -K_Y \sim \mathrm{\sum_{C_i \in I} C_i}.
     \end{equation*}
\end{lem}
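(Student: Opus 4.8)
The plan is to use the linear system of the divisor $L = \sum_{C_i \in I} C_i$ associated to the cycle, together with Lemma~\ref{lem34}, which governs precisely effective divisors with $L^2 = 0$ and $-K_Y \cdot L = 2$. By the preceding Proposition, a cycle $I$ of content $c(I) = c$ gives an effective divisor $L_I := \sum_{C_i \in I} C_i$ with $L_I^2 = -K_Y \cdot L_I = c$. First I would treat the key case $c = 2$: here $L = L_I$ satisfies exactly the hypotheses of Lemma~\ref{lem34}~i), so $h^0(Y, \mathcal{O}_Y(L)) = 2$, i.e.\ $L$ moves in a pencil. Then I would invoke parts iii) and iv): either $L$ is already base point free, in which case its decomposition into $(-1)$- and $(-2)$-curves is one of the three types in Table~1, each of which visibly has content $2$ and arithmetic genus $0$ --- but a cycle as in Definition~\ref{cycle_def} has a reducible member that is a genuine loop of curves (so $p_a \ge 1$), contradicting that the generic and hence every member is $\mathbb{P}^1$ after the contractions of iv); or $L$ has a fixed part $F$ consisting of $(-2)$-curves and a movable part $M$ with $M^2 = 0$, $-K_Y \cdot M = 2$, again forcing the cycle structure to degenerate. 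The upshot is that $c(I) = 2$ is impossible unless $d \le 2$, and when $d = 2$ one checks directly (using the Remark after Lemma~\ref{lem34} and Lemma~\ref{intersection_minus_one_curves}, which says $E_1 \cdot E_2 = 2$ forces $E_1 + E_2 \sim -K_Y$) that $c(I) = 2 = d$ forces $L_I \sim -K_Y$.

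Next I would handle general $c$. Suppose for contradiction that $c = c(I) < d$. The idea is to successively ``break'' the cycle: pick a $(-1)$-curve $E \in I$ (one exists since $c \ge 1$; if $c = 0$ then $L_I^2 = 0$ with support in $(-2)$-curves, impossible by negative-definiteness) and consider the contraction $\pi\colon Y \to Y'$ of $E$. The curves of $I \setminus \{E\}$ map to curves on $Y'$, and because $E$ met exactly its two neighbours in the cycle each once, their images reconnect into a cycle $I'$ on the weak del Pezzo surface $Y'$ of degree $d' = d + 1 > d$, with content $c(I') = c - 1$. Iterating, after $c$ steps we reach a weak del Pezzo surface of degree $d + c$ carrying a ``cycle'' of content $0$ --- a loop of $(-2)$-curves --- whose associated divisor $L$ has $L^2 = 0$ and support in $(-2)$-curves; by negative-definiteness of the intersection form on $(-2)$-curves this forces $L = 0$, a contradiction. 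Hence $c(I) \ge d$. (One must take care that the surface stays a weak del Pezzo at each step: contracting a $(-1)$-curve on a weak del Pezzo surface either yields another weak del Pezzo surface or $\mathbb{P}^1 \times \mathbb{P}^1$ or $\mathbb{P}^2$; the latter two have degrees $8, 9$ and carry no $(-2)$-curves, so the argument terminates consistently, and when $d + c \ge 9$ one gets the contradiction even sooner.)

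For the moreover part, assume $c(I) = d$. Run the same contraction procedure: after $d$ steps we arrive at $\mathbb{P}^2$ (degree $9 = d + c$... here one should instead stop the blow-down sequence appropriately, or argue numerically), and $L_I$ pulls back from a line; more directly, I would argue on $Y$ itself. We have $-K_Y \cdot L_I = d = -K_Y \cdot (-K_Y)$, so $-K_Y \cdot (-K_Y - L_I) = 0$, hence $-K_Y - L_I$ is either $0$ or effective with support in $(-2)$-curves (using $-K_Y$ nef and big, and that a nef-and-big class meeting an effective class trivially forces that class into the negative-definite $(-2)$-lattice). Similarly $(-K_Y - L_I)^2 = d - 2d + L_I^2 = d - 2d + d = 0$, so again negative-definiteness forces $-K_Y - L_I \sim 0$, i.e.\ $-K_Y \sim L_I = \sum_{C_i \in I} C_i$.

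The main obstacle I anticipate is making the ``contract a $(-1)$-curve of the cycle and reconnect'' step fully rigorous: one must verify that the chosen $E \in I$ is a genuine $(-1)$-curve on $Y$ that can be contracted (it is, being a $(-1)$-curve on a smooth surface), that the images of its two cycle-neighbours are still curves in $D(Y')$ with the right self-intersections and meet each other transversally in one new point (this is where the ``almost general position'' hypothesis and the combinatorics of the cycle enter --- in particular one must ensure the two neighbours are distinct, which is automatic when $m \ge 3$ and needs the special $m = 2$ clause of Definition~\ref{cycle_def} handled separately), and that $Y'$ is again a weak del Pezzo surface so that the inductive hypothesis applies. The degenerate endpoint cases ($Y'$ becoming $\mathbb{F}_1$, $\mathbb{F}_2$, $\mathbb{P}^1 \times \mathbb{P}^1$, or $\mathbb{P}^2$) need to be enumerated and dispatched by hand, but each is a small finite check.
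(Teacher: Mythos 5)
Your main reduction --- contracting a $(-1)$-curve $E\in I$ and claiming its image is a cycle of content $c-1$ on a surface of degree $d+1$ --- has the arithmetic backwards, and this breaks your proof of $c(I)\ge d$. Write $L=\sum_{C_i\in I}C_i$, so $L^2=-K_Y\cdot L=c$. If $\pi$ contracts $E\in I$ and $L'=\pi_*L$, then because the two cycle-neighbours of $E$ both pass through $\pi(E)$ we have $\pi^*L'=L+E$, hence $L'^2=-K_{Y'}\cdot L'=c+1$ on a surface of degree $d+1$: the content goes \emph{up} by one together with the degree (concretely, two $(-2)$-neighbours of $E$ become $(-1)$-curves), and if a neighbour of $E$ is itself a $(-1)$-curve its image has self-intersection $0$, so the configuration is no longer a cycle inside $D(Y')$ at all. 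Thus $c(I)-d$ is invariant under your step and you can never reach a ``loop of $(-2)$-curves''; note also that your induction as written never uses the hypothesis $c<d$, so if it were correct it would prove that no cycle exists on any weak del Pezzo surface, which is false. Two further points: for a cycle of content $2$ the associated divisor has $L^2=2\neq 0$, so Lemma \ref{lem34} i) does not apply to it as you claim in your ``key case''; and in the ``moreover'' part you assert that $-K_Y-L$ is zero or effective and supported on $(-2)$-curves merely from $-K_Y\cdot(-K_Y-L)=0$, but effectivity is precisely the missing ingredient there --- what comes for free from $(-K_Y-L)^2=0$ and orthogonality to the nef and big class $-K_Y$ is only numerical triviality (Hodge index), which would still need the remark that numerical and linear equivalence coincide on a rational surface.

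The paper's proof avoids all of this with a short cohomological argument you should compare with: since $(K_Y+L)\cdot L=0$ (equivalently $p_a(L)=1$), Riemann--Roch together with $h^2(Y,\mathcal{O}_Y(K_Y+L))=h^0(Y,\mathcal{O}_Y(-L))=0$ gives $h^0(Y,\mathcal{O}_Y(K_Y+L))\ge 1$, so $K_Y+L$ is effective; nefness of $-K_Y$ then yields $0\le -K_Y\cdot(K_Y+L)=c(I)-d$, and when $c(I)=d$ the effective divisor $K_Y+L$ satisfies $-K_Y\cdot(K_Y+L)=0$, hence is supported on $(-2)$-curves, and $(K_Y+L)^2=0$ forces it to vanish by negative-definiteness of the $(-2)$-lattice, giving $-K_Y\sim L$. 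If you wish to keep your outline, establish effectivity of $K_Y+L$ first and then run the nefness and negative-definiteness arguments; the contraction induction in its present form cannot be repaired simply, because the quantity it is supposed to decrease is actually preserved.
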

\begin{proof}
Denote $C = \sum_{i=1}^{d+r} E_i + \sum A_j $, where $r=c(I)-d \in \mathbb{Z}$, $E_i \in E(Y)$, $A_j \in A(Y)$. Note that $c(I)>0$ because $Y$ is a minimal resolution of a du Val del Pezzo surface and a connected set of $(-2)$-curves from $A(Y)$ is a Dynkin diagram of type $\mathbb{A},\mathbb{D},\mathbb{E}$, which is a tree. 
By Riemann-Roch, 
\begin{equation*}
    h^0(K_Y+C) \ge 1+ \frac{1}{2}(K_Y+C)\cdot C = 1
\end{equation*}
Hence, there exists an effective divisor in $|K_Y+C|$.
Because $-K_Y$ is nef, we have 
\begin{equation*}
0 \le -K_Y \cdot (K_Y+C) = c(I)-d. 
\end{equation*}
Suppose that  $c(I)=d$. Then  $-K_Y \cdot (K_Y+C) = 0$ and we have
\begin{equation*}
    K_Y+C \sim  \sum a_jA_j, \quad 0 < a_j \in \mathbb{Z}.
\end{equation*}
Then $0 = (K_Y+C)^2 = (\sum a_jA_j)^2$.
We deduce that all $a_i=0$, because the intersection form is negative-definite on subgroup in $\mathrm{Pic}(Y)$, generated by $(-2)$-curves. Then  $K_Y+C \sim 0$.
\end{proof}

By the previous lemma, if it is possible to find a cycle of content $d$ in $D(Y)$ which is snc, then $\sigma'(Y)=d$.
The next lemma provides an induction step from surfaces of degree $d$ to surfaces of degree $d-1$.

\begin{lem} \label{induction_lemma}
    Suppose that $K_Y^2=d \ge 3$. Let $\phi \colon Z = Bl_p(Y) \to Y$, such that $p$ does not belong to any curve from $A(Y)$. Note that $Z$ is a weak del Pezzo surface as well. Suppose that $supp(D(Y))$ and $supp(D(Z))$ are snc. Let $I$ be a cycle in $D(Y)$, $c(I)=d$, so $-K_Y\sim C$.
    Suppose that $p$ is not a point of intersection of two $(-1)$-curves from $E(Y)\backslash I$. Then there is a cycle $J$ on $Z$ such that $c(J) = d-1$ and $\sigma'(Z)=d-1$.
\end{lem}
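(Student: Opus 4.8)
The plan is to produce the cycle $J$ on $Z$ explicitly from the cycle $I$ on $Y$, distinguishing cases by how the blown-up point $p$ sits relative to the curves of $I$ and the other $(-1)$-curves. Write $C=\sum_{C_i\in I}C_i\sim -K_Y$, and let $E$ be the exceptional $(-1)$-curve of $\phi$. Recall from Lemma~\ref{exists} that it suffices to exhibit an snc cycle $J\subset D(Z)$ with $c(J)=d-1$: then automatically $\sigma'(Z)=d-1$, since $\sigma'(Z)\le d-1$ by Lemma~\ref{kleiman} and $C_J:=\sum_{C_i\in J}C_i\sim -K_Z$ realizes a log-canonical boundary (an snc reduced nodal cycle of rational curves has log-canonical — even log-terminal-at-worst — singularities) with $\sum d_i = c(J) = d-1$.

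First I would handle the generic case: $p\notin C$. Then $-K_Z=\phi^*(-K_Y)-E$, and the strict transforms $\widetilde{C_i}=\phi^*C_i$ keep all their intersection numbers, while now one of them, say $\widetilde{C_1}$, may or may not meet $E$. If $p$ lies on exactly one curve $C_1$ of $I$ (so $p$ is a smooth point of $\mathrm{supp}(C)$), then $\widetilde{C_1}^2$ drops by one and $\widetilde{C_1}\cdot E=1$; I insert $E$ into the cyclic chain by breaking one of the two edges at $C_1$, i.e. replace the sub-path $C_0-C_1-C_2$ of the cycle by $C_0-C_1-E-\dots$? — more precisely, since $E$ meets only $\widetilde{C_1}$, the cleanest move is: take $J = (I\setminus\{C_1\})\cup\{\widetilde{C_1},E,\widetilde{C_1}\}$ is not a set, so instead subdivide: if $C_1$ had neighbours $C_0,C_2$ in $I$, the set $\{\widetilde{C_0},\dots,\widetilde{C_2}\}$ together with $E$ attached at $\widetilde{C_1}$ is a "cycle with a tail", which is not yet a cycle. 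The correct construction: because $C\sim -K_Y$ with $C^2 = c(I) = d$, the divisor $\phi^*C - 2E$ has the wrong class; instead use that on $Z$ one wants a cycle of content $d-1$, so one $(-1)$-curve must disappear from the count. This is exactly why the hypothesis "$p$ is not the intersection of two $(-1)$-curves of $E(Y)\setminus I$" enters: one wants $p$ to lie on (at most) one $(-1)$-curve $E'$ outside $I$, or on a $(-1)$-curve of $I$; in the former case $\widetilde{E'}$ becomes a $(-2)$-curve on $Z$ — no wait, $\widetilde{E'}^2 = -2$ only if $p\in E'$, and then $\widetilde{E'}\in A(Z)$ is available to patch the cycle while $E$ provides a new $(-1)$-curve. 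So the mechanism is: blowing up a point on a $(-1)$-curve $E'\in I$ converts $E'$ to a $(-2)$-curve and creates $E$; replacing $E'$ by the pair $\widetilde{E'},E$ in the cyclic order (they meet transversally, and $E$ inherits one of $E'$'s two neighbours) keeps it a cycle, now of content $c(I)-1+? $ — one must check the arithmetic of $c(J)$ carefully, and the case $p\in E'$ with $E'\notin I$ but $E'$ meeting two curves of $I$, etc.

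The key steps, in order, are therefore: (1) reduce, via Lemma~\ref{exists} and Lemma~\ref{kleiman}, to merely constructing an snc cycle $J\subset D(Z)$ with $c(J)=d-1$ and verifying $C_J\sim -K_Z$ gives a log-canonical pair; (2) enumerate the position of $p$: either $p$ lies on no curve of $D(Y)$ (then $\widetilde{C}$ stays a cycle of content $d$ but $-K_Z = \widetilde{C}-E \not\sim$ effective in the needed way, so one instead removes one $(-1)$-curve $E_1\in I$ from $C$ and adds $E$ and the strict transform appropriately — here I would invoke Lemma~\ref{lem34} to find, passing through $p$, a suitable fiber-type divisor $L\sim E_1 + A_1+\dots+A_l+E_2$ through $p$ realizing the reconfiguration), or $p$ lies on exactly one curve of $I$, or $p$ lies on a $(-1)$-curve $E'$ of $D(Y)\setminus I$; (3) in each case write down $J$ and check: it is a cycle per Definition~\ref{cycle_def} (intersection numbers $1$ around the loop, or a single pair meeting in $2$), its members are genuine curves in $D(Z)$ (self-intersections $-1$ or $-2$ — this uses Remark~\ref{mult_remark} for the multiplicity/strict-transform bookkeeping and the hypothesis on $p$ to keep the new curves nef-degree-correct), $\mathrm{supp}(D(Z))$ is snc by assumption so the cyclic union is an snc reduced curve, and $c(J)=d-1$; (4) conclude $C_J\sim -K_Z$ from the $c(J)=d$-for-$Z$ — rather, $c(J) = d-1 = K_Z^2$, so Lemma~\ref{exists} applied to $Z$ gives $C_J\sim -K_Z$, hence $\sigma'(Z)\ge c(J)=d-1$, and combined with Lemma~\ref{kleiman} equality holds.

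The main obstacle I anticipate is step (2)–(3) in the subcase where $p$ avoids $D(Y)$ entirely: there the strict transform of $C$ is still a cycle of content $d$, but on $Z$ a content-$d$ cycle would force $\sigma'(Z)=d > d-1 = K_Z^2$, contradicting Lemma~\ref{kleiman} — so in fact $\widetilde{C}$ cannot be snc-compatible-with-being-used, meaning $p$ generic forces $\widetilde{C}$ to fail some hypothesis, or more likely the real content is that one must actively \emph{reduce} the cycle. Concretely, one of the two $(-1)$-curves adjacent in $I$, say $E_1$ with $I$-neighbours $C_0$ and $C_2$, must be replaced: using Lemma~\ref{lem34}(iv) one finds a base-point-free $L$ with $-K_Y\cdot L = 2$ through $p$ whose components lie among $D(Y)$, and on $Z$ the strict transform of $L$ together with $E$ reorganizes $C$ into $C_J$ with one fewer $(-1)$-curve in the loop. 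Making this replacement respect the snc and cycle conditions simultaneously — in particular ensuring no unwanted tangencies or triple points are created on $Z$, which is where the hypothesis "$supp(D(Z))$ is snc" and "$p$ not a node of two $(-1)$-curves outside $I$" are used — is the delicate bookkeeping, and I would organize it as a short finite case check on the local picture near $p$ rather than a uniform formula.
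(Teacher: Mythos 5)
Your overall skeleton is the paper's: reduce via Lemmas \ref{kleiman} and \ref{exists} to exhibiting a cycle $J\subset D(Z)$ with $c(J)=d-1$, split into cases according to the position of $p$, and, when $p$ is off the cycle, use the linear system from Lemma \ref{lem34} to trade two $(-1)$-curves of $I$ for a single new one. But two of your three case constructions, as written, would fail. First, when $p$ lies on exactly one $(-1)$-curve $C_1\in I$, you try to insert the exceptional curve $E$ into the loop and correctly notice you only get a ``cycle with a tail''; the resolution you never reach is that $E$ is simply not needed: the strict transforms of the curves of $I$ already form a cycle on $Z$, with $\widetilde{C_1}$ now a $(-2)$-curve, so the content drops by one automatically ($E$ enters the cycle only when $p$ is the node of two $(-1)$-curves of $I$, in which case it is inserted between their strict transforms, which no longer meet). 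Second, when $p$ lies on a $(-1)$-curve $E'\notin I$, your proposed patching through $\widetilde{E'}\in A(Z)$ and $E$ cannot produce a cycle at all: since $p$ lies on no other curve of $D(Y)$, $E$ meets only $\widetilde{E'}$, so $E$ has valence one in the dual graph of $D(Z)$ and lies on no cycle. The paper handles this case the same way as $p\notin D(Y)$: choose a chain $D\subset I$ containing exactly two $(-1)$-curves with $D^2=0$ and $-K_Y\cdot D=2$, use Lemma \ref{lem34} and Lemma \ref{genus_zero} (note $D\cdot E'=1$) to find $L\in|D|$ through $p$, use the hypothesis on $p$ to exclude a reducible $L$ (a reducible member through $p$ would contain a $(-1)$-curve through $p$, giving two $(-1)$-curves of $E(Y)\setminus I$ meeting at $p$), and take $J$ to consist of the complementary part of $I$ together with $\phi_{*}^{-1}(L)$, which is a $(-1)$-curve on $Z$; neither $E$ nor $\widetilde{E'}$ belongs to $J$.

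Two related slips should also be repaired. You assert that a content-$d$ cycle on $Z$ would force $\sigma'(Z)=d$ and contradict Lemma \ref{kleiman}; this is false, since Lemma \ref{exists} only gives $c(I)\ge K^2$ and produces $-K\sim\sum C_i$ precisely when equality holds, so a cycle of content $d>K_Z^2$ is not contradictory, merely useless --- which is exactly why the cycle must be actively shortened. Likewise, the member $L$ through $p$ is precisely \emph{not} a combination of curves of $D(Y)$ (it is a new irreducible rational curve with $L^2=0$ whose strict transform is the new $(-1)$-curve closing the loop), contrary to your phrase ``whose components lie among $D(Y)$'', and the exceptional curve $E$ plays no role in the reorganized cycle. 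These are the places where the hypothesis that $p$ is not a node of two $(-1)$-curves of $E(Y)\setminus I$ is actually consumed, and until they are fixed the induction step is not yet a proof.
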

\begin{proof}

We consider three cases according to the position of $p$.

    \textbf{Case 1.} 
    If $p$ lies on a $(-1)$-curve from $I$, then  $-K_Z \equiv D  = \phi_{*}^{-1}(C)+\delta E$. Here $\delta=0$ if $p$ lies exactly on one $(-1)$-curve from $I$ and $\delta=1$ if $p$ is the intersection of two $(-1)$-curves from $I$.
    If $\delta=0$, then the cycle $J$ consists of all the strict transforms of the curves from $D$. If $\delta=1$, then the cycle $J$ consists of all the strict transforms of the curves from $D$ and the exceptional divisor $E$ of the blow-up.  Note that $c(J)=d-1$, because the number of $(-1)$-curves in the cycle decreases of $n - \delta = 1$, where $n$ is the number of $(-1)$-curves which contain the point $p$.
    
    \textbf{Case 2.} 
    If $ p \notin D(Y)$ and $p$ lies on a $(-1)$-curve $E$ which is not a component of $C$, then $E\cdot C = 1$ and there is the only one curve $F$ from $I$ which intersects $E$ in a point $q$. Now take  $D =\sum \limits_{C_i \in I} C_i + F$, where the sum is taken among such $C_i$ that $\mathrm{supp}(D)$ is connected and $-K_Y\cdot D = 2$, i.e, $D$ contains exactly two $(-1)$-curves. In fact, $supp(D)$ is a circuit from a cycle $I$, i.e., the sequence of curves $C_1 \dots C_k$ such that 
    \begin{equation*}
    C_1 \cdot C_2 = C_2 \cdot C_3 = \dots = C_{k-1} \cdot C_k = 1
    \end{equation*}
    and the rest intersection numbers $C_i \cdot C_j = 0$ for $i \neq j$. 
    Note that  $D^2=0$ and $-K_Y\cdot D = 2$. By Lemma \ref{lem34}, 
    \begin{equation*}
        h^0(D) = 2,
    \end{equation*}
    Because $D\cdot E = 1$, it is possible to choose a curve $L \in |D|$ such that $p\in supp(L)$. If $L$ is reducible, then there is a $(-1)$-curve $\tilde{E} \subset \mathrm{supp}(L)$ which passes through $p$, which is a contradiction. So $L$ is an irreducible divisor, hence, it is a rational curve with $L^2=0$.
    Then $J$ consists of the strict transforms of the curves from $I$ which are not included to $D$ and $\phi_{*}^{-1}(L)$, which is a $(-1)$-curve. One can check that $J$ is a cycle.

    \textbf{Case 3}.  
    If $p \notin D(Y)$, then choose any $D$ made from $I$ in the way above. The structure of $J$ is the same that in the last case.
\end{proof}
\begin{remark} \label{weak_contraction}
    If $\pi \colon Y \to Y'$ is the contraction of a $(-1)$-curve on $Y$ which is weak del Pezzo surface, then $Y'$ is also a weak del Pezzo surface.  Indeed, for $C \subset Y'$ we have 
    \begin{equation*}
        -K_{Y'}\cdot C = -K_Y \cdot \pi^*(C) = -K_Y \cdot (\Tilde{C}+eE) = -K_Y\cdot C +e \ge 0
    \end{equation*} where $e=mult_{\pi(E)}(C)$. Hence, $-K_{Y'}$ is nef. Because $K_{Y'}^2=K_{Y}^2+1 > 0$ and $-K_{Y'}$ is nef, $-K_{Y'}$ is big.
\end{remark}

We need one more statement about the case when there exists a cycle $I \in D(Y)$, $c(I)=d$ and $(-1)$-curves $C_1,C_2$ such that  $p \in C_1 \cap C_2$, $C_1,C_2 \notin I$. 

Obviously it is possible to include $C_1, C_2$ into 2 cycles $I_1, I_2$ but they might have $c(I_j) > d$.

\begin{lem} \label{coffin4}
    Suppose that $d=4$, and there is a cycle $I \in D(Y)$ such that $c(I)=4$, $C_1,C_2 \notin I$ are $(-1)$-curves which intersect in $p\notin D(Y)$. Then it is possible to include $C_1$ and $C_2$ into a cycle $J$ such that $c(J)=4$.
\end{lem}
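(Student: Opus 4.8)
The plan is to realise the desired cycle $J$ as the set of irreducible components of a suitable reduced anticanonical divisor passing through $C_1$ and $C_2$, and to obtain that divisor from the linear system $|{-}K_Y-C_1-C_2|$ using Lemma \ref{lem34}.

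Since $d=4\ge 3$, Lemma \ref{intersection_minus_one_curves} gives $C_1\cdot C_2\le 1$, and as $p\in C_1\cap C_2$ we get $C_1\cdot C_2=1$. Put $L:=-K_Y-C_1-C_2$. A direct computation gives $L^2=0$ and $-K_Y\cdot L=2$; Riemann--Roch yields $\chi(\mathcal O_Y(L))=2$, while $h^2(\mathcal O_Y(L))=h^0(\mathcal O_Y(K_Y-L))=0$ because $-K_Y\cdot(K_Y-L)<0$ and $-K_Y$ is nef. Hence $L$ is linearly equivalent to an effective divisor, and by Lemma \ref{lem34}(i) we have $h^0(\mathcal O_Y(L))=2$. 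I claim it is enough to find a \emph{reduced} $B\in|L|$ with $\mathrm{supp}(B)\subseteq\mathrm{supp}(D(Y))$ and $C_1,C_2\notin\mathrm{supp}(B)$. Indeed, set $\Gamma:=C_1+C_2+B\sim -K_Y$. Then $\Gamma$ is reduced, all its components are $(-1)$- or $(-2)$-curves (hence rational, and by the standing snc hypothesis of this section meeting pairwise transversally at distinct points), and $\Gamma$ is connected since $h^1(\mathcal O_Y)=0$ makes anticanonical divisors connected. For every component $\Gamma_k$ of $\Gamma$ one has $\Gamma_k\cdot(\Gamma-\Gamma_k)=\Gamma_k\cdot(-K_Y)-\Gamma_k^2=2$ (it is $1-(-1)$ if $\Gamma_k$ is a $(-1)$-curve and $0-(-2)$ if it is a $(-2)$-curve); as $\Gamma$ has at least four components, this forces every vertex of the dual graph of $\Gamma$ to have degree exactly $2$, so the dual graph is a single circuit. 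Thus $J:=\{\text{components of }\Gamma\}$ is a cycle containing $C_1$ and $C_2$, and its content equals $2$ (from $C_1,C_2$) plus the number of $(-1)$-curves in $B$, which equals $-K_Y\cdot B=-K_Y\cdot L=2$; hence $c(J)=4$.

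To produce $B$, write $L=F+M$ by Lemma \ref{lem34}(iii), where $F$ is the (possibly zero) fixed part, a sum of $(-2)$-curves, and $M$ is base-point-free with $M^2=0$, $-K_Y\cdot M=2$; by Lemma \ref{lem34}(iv), $M$ is of one of the three types of Table 1. If $M$ is of Type $1$, then $M\sim E_3+A_1+\dots+A_l+E_4$ with distinct $(-1)$-curves $E_3,E_4$, and the reduced chain $B_0:=E_3+A_1+\dots+A_l+E_4$ is an effective member of $|M|$; then $B:=F+B_0\in|L|$ is supported on $D(Y)$, and $C_1,C_2\notin\mathrm{supp}(B)$ because $C_i\cdot M=1$, whereas $C_i=E_3$ or $C_i=E_4$ would force $C_i\cdot M\le-1+1=0$, and $C_i$ is not a $(-2)$-curve. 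One also has to check that $F$ is reduced and that the reducible member of the conic pencil $|M|$ can be chosen with support disjoint from $\mathrm{supp}(F)$, so that $B$ itself is reduced; this uses $F\cdot M\ge 0$, $F^2=-2F\cdot M$, and the negative-definiteness of the sublattice of $\mathrm{Pic}(Y)$ spanned by the $(-2)$-curves.

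The remaining point, which I expect to be the main obstacle, is that $M$ cannot be of Type $2$ or Type $3$. In those cases the only $(-1)$-curve of $\mathrm{supp}(M)$ is a single curve $E_0$ occurring with coefficient $2$ and satisfying $M\cdot E_0=0$; then $1=C_i\cdot L\ge 2(C_i\cdot E_0)$ forces $C_i\cdot E_0=0$ for $i=1,2$, and $1=E_0\cdot L=E_0\cdot F$ forces a $(-2)$-curve $A_0\subseteq\mathrm{supp}(F)$ with $E_0\cdot A_0=1$; combined with the shape of Types $2$ and $3$ this puts (at least) three $(-2)$-curves through $E_0$. Excluding this requires using that the $(-2)$-curves on a weak del Pezzo surface of degree $4$ form a subdiagram of the $D_5$ Dynkin diagram and that $p=C_1\cap C_2$ lies on no curve of $D(Y)$ besides $C_1,C_2$, together with the standing hypothesis that a cycle $I$ of content $4$ with $C_1,C_2\notin I$ exists on $Y$ --- equivalently, one shows that for $M$ of Type $2$ or $3$ these hypotheses are incompatible. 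Once Types $2$ and $3$ are ruled out, the divisor $B$ constructed above exists and the proof is complete.
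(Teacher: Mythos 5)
Your reduction is attractive and genuinely different from the paper's argument: with $C_1\cdot C_2=1$ (Lemma \ref{intersection_minus_one_curves}), the class $L=-K_Y-C_1-C_2$ is effective with $L^2=0$, $-K_Y\cdot L=2$, and if one can exhibit a \emph{reduced} $B\in|L|$ supported on $D(Y)\setminus\{C_1,C_2\}$, then $C_1+C_2+B$ is a connected reduced anticanonical divisor whose dual graph is a circuit, giving the desired cycle of content $4$. But the existence of such a $B$ is not a technical remainder --- it is essentially equivalent to the statement being proved (any cycle $J$ with $c(J)=4$ containing $C_1,C_2$ produces such a $B$ via Lemma \ref{exists}), and this is precisely the point you leave open: you only assert that degenerate members of Type 2 or 3 ``cannot occur'' and list ingredients (the $D_5$ lattice, the position of $p$, the hypothesis on $I$) that an exclusion argument ``would require'', without giving it. Note also that ``$M$ is of Type 1/2/3'' is not a property of the class $M$: $|M|$ is a pencil whose several degenerate members may be of different types, so what you actually need is that \emph{some} degenerate member avoiding $C_1,C_2$ is reduced and shares no component with $F$, and that $F$ is reduced; none of this is established (your inequality $1=C_i\cdot L\ge 2\,C_i\cdot E_0$ already assumes $C_i\not\subset\mathrm{supp}(F)$, which is part of what must be shown). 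Tellingly, the hypothesis that a cycle $I$ with $c(I)=4$ and $C_1,C_2\notin I$ exists is never used in the portion you actually prove, although it is exactly the input that must be exploited.

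For comparison, the paper's proof consists of doing that missing work combinatorially: it examines how $C_1,C_2$ attach to the given cycle $I$; in all but one configuration the cycle $J$ is read off directly from arcs of $I$, and decompositions from Lemma \ref{lem34}(ii) reduce everything to the single bad case where the four $(-1)$-curves of $I$ are pairwise non-adjacent and $C_1,C_2$ meet opposite curves $E_1,E_3$. That case (the shadow of your Type 2/3 situation) is then eliminated by a concrete geometric step: blowing up $p$ turns the strict transforms of $C_1,C_2$ into $(-2)$-curves, so $E_1$ meets three $(-2)$-curves; contracting $E_1$ produces a degree-$4$ weak del Pezzo surface with three $(-1)$-curves through one point, i.e. a cycle of content $3<d$, contradicting Lemma \ref{exists}. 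Some argument of this kind, or an equally explicit computation in the $D_5$ root lattice, has to replace your sentence ``one shows that for $M$ of Type 2 or 3 these hypotheses are incompatible''; as written, the proposal is incomplete exactly at its central step.
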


\begin{proof}
We prove the statement by contradiction. Suppose that the desired cycle does not exist.

\textbf{Step 1.}
    Denote $I=\lbrace E_1, \dots, E_2, \dots, E_3, \dots, E_4, \dots \rbrace$ where there may be $(-2)$-curves instead of dots.    
    Because $\sum_{i=1}^4 E_i + \dots \sim -K_Y$, $C_1$ and $C_2$ intersect one of the curves of $I$ in  points $q_1$ and $q_2$, correspondingly.
    If $C_1$ intersects a $(-2)$-curve from $I$, one can build the cycle $J$ as follows: start from this $(-2)$-curve and go clockwise or anti-clockwise in $I$ until a curve from $I$ which intersects $C_2$ is reached.
    Let us now consider that $(C_1,E_1)=1$. If $(C_2,E_3)=0$, analogously, start from $E_1$ and go clockwise or anti-clockwise in $I$ until a curve from $I$ which intersects $C_2$ is reached. 
    Let us finally consider that $(C_1,E_1)=(C_2,E_3)=1$. Now the content of the cycles built in the way above is $5$.

\textbf{Step 2.} 
Let us prove that $(E_1,E_2)=0$, i.e., $I=\lbrace E_1, A_1,\dots, A_k, E_2, \dots \rbrace$.
    Suppose that $(E_1,E_2)=1$.
Here lemma \ref{lem34} is applied. Denote by $L$ the curve such that $L^2=0$ and $E_2+ \tilde{A}_1 + \dots +\tilde{A}_l + E_3 \sim L$. Then $(L,C_1)=0$ and, because $(L,C_2)=1$, there exist $(-2)$-curves $A_1,\dots A_k$ and a $(-1)$-curve $E$ such that $L \sim C_1+A_1+\dots +A_k + E$. Note that $E\cdot E_2=E\cdot E_3 = 0 $, $E\neq E_1$ because $(E_1,E_2)=1$ and $E \neq E_4$ because otherwise the cycle $ K = \lbrace C_1, \dots , E = E_4, \dots E_1 \rbrace$ satisfies $c(K) = 3$, which is a contradiction. Then $E$ intersects either $E_4$  or one of $(-2)$-curves. In both cases $C_1$ can be included into a new cycle of content equal to $4$. 

\textbf{Step 3.} 
From step 2 immediately follows that
\begin{equation*}
    (E_1,E_2)=(E_2,E_3)=(E_3,E_4)=(E_4,E_1)=0
\end{equation*} by symmetry.
Consider a blow-up $\psi \colon \widetilde{Y}=Bl_p(Y) \to Y$. Notice that $\psi^{-1}(E_1)$ is a $(-1)$-curve and it intersects three $(-2)$-curves. Then contract $\psi^{-1}(E_1)$ and get a weak del Pezzo surface of degree 4 which has three $(-1)$-curves $D_1,D_2,D_3$ which intersect in a point, which leads to a contradiction because then $I=\lbrace D_1,D_2,D_3 \rbrace$ is a cycle of content $3$.







\end{proof}
\begin{lem} \label{cyclic_d}
    
Suppose that $d \ge 3$ and there exists a cycle $I \subset D(Y)$. Then there exists a cycle $J \subset D(Y)$ such that $c(J)=d$.
\end{lem}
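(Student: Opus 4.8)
The plan is to argue by induction on the degree $d$, using the blow-down/blow-up structure of weak del Pezzo surfaces together with the reduction lemmas already proved. The base case is $d = 7$ (or one can start even higher): here $Y$ is either $\mathbb{P}^2$ blown up in two points in almost general position or $\mathbb{P}^1 \times \mathbb{P}^1$ or $\mathbb{F}_2$, and one checks directly that any cycle $I \subset D(Y)$ can be completed to, or already is, a cycle of content $d$ — the configurations of $(-1)$- and $(-2)$-curves are few enough to enumerate. The idea for the inductive step is to pass from $Y$ to a weak del Pezzo surface $Y'$ of degree $d+1$ by contracting a suitable $(-1)$-curve $E_0 \subset Y$ (legitimate by Remark \ref{weak_contraction}), arrange that the image of the given cycle $I$ remains a cycle on $Y'$, apply the inductive hypothesis to obtain on $Y'$ a cycle of content $d+1$, and then blow back up and invoke Lemma \ref{induction_lemma} (or Lemma \ref{coffin4} in the degree-$4$ exceptional case) to recover a cycle of content $d$ on $Y$.

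Concretely, given a cycle $I \subset D(Y)$: if $I$ already has content $d$ we are done, so assume $c(I) > d$. First I would reduce to the case where $I$ contains a $(-1)$-curve $E_0$ that can be contracted while keeping the rest of $I$ a cycle on $Y' = \mathrm{Bl}^{-1}E_0$; since a cycle with $c(I) > d \ge 3$ contains at least two $(-1)$-curves, and since $-K_Y \cdot \sum_{C_i \in I} C_i = c(I) > 0$, there is room to pick $E_0 \in I$ not meeting the other components of $I$ in a way that destroys the cycle structure. Under $\pi \colon Y \to Y'$ the image $\pi(I \setminus \{E_0\})$, after reconnecting at $\pi(E_0)$, is a cycle in $D(Y')$ of content $c(I) - 1$; by the inductive hypothesis (applicable since $K_{Y'}^2 = d+1$) there is a cycle $J'$ in $D(Y')$ with $c(J') = d+1$. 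Now blow up the point $p = \pi(E_0)$: by Lemma \ref{induction_lemma}, provided $p$ is not the intersection of two $(-1)$-curves of $E(Y') \setminus J'$, there is a cycle on $Y$ of content $(d+1) - 1 = d$, as desired. The leftover case — $p$ lying on two $(-1)$-curves of $Y'$ not in $J'$ — is precisely what Lemma \ref{coffin4} handles when $d+1 = 4$; for $d+1 \ge 5$ one must check that the same conclusion (including $C_1, C_2$ into a cycle of content $d+1$) still holds, which should follow by the same argument since the $(-1)$-curve configurations only get more rigid as $d$ decreases.

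I expect the main obstacle to be controlling the snc hypothesis and the degenerate incidences throughout the induction: Lemma \ref{induction_lemma} requires both $\mathrm{supp}(D(Y))$ and $\mathrm{supp}(D(Z))$ to be snc, and the blow-down $\pi$ may a priori create non-snc configurations on $Y'$ (three $(-1)$-curves through a point, or a $(-1)$-curve meeting a $(-2)$-curve with multiplicity two), which forces a careful case analysis of exactly which $(-1)$-curve $E_0 \in I$ to contract. There is also a bookkeeping subtlety in verifying that the reconnected image $\pi(I \setminus \{E_0\})$ genuinely satisfies Definition \ref{cycle_def} — in particular when $E_0$ is adjacent in $I$ to two curves whose images then meet, one must confirm the resulting intersection number is exactly $1$ (or $2$, giving a two-element cycle). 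Once these incidence bookkeeping points are settled, the numerology ($c(J) = d$, and hence $-K_Y \sim \sum_{C_i \in J} C_i$ by Lemma \ref{exists}) is immediate, and combining with Lemma \ref{kleiman} gives $\sigma'(Y) = d$ whenever any cycle exists and the relevant supports are snc.
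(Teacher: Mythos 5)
Your inductive framework (contract a curve, apply the hypothesis one degree up, then blow back down via Lemma \ref{induction_lemma} and Lemma \ref{coffin4}) resembles the paper's strategy, but the step where you contract a $(-1)$-curve $E_0$ belonging to the cycle $I$ has a genuine gap. Every curve of $I$ meets its two neighbours in $I$, so there is no $E_0\in I$ that avoids ``destroying the cycle structure'' for free; in particular, if a neighbour of $E_0$ in $I$ is itself a $(-1)$-curve, its image under the contraction has self-intersection $0$, hence lies outside $D(Y')$, and the reconnected image of $I\setminus\{E_0\}$ is not a cycle in the sense of Definition \ref{cycle_def}. A cycle may consist entirely of $(-1)$-curves (a triangle or hexagon of lines), in which case no admissible $E_0$ exists at all, and you give no other mechanism for producing a cycle on $Y'$ --- which is exactly what your inductive hypothesis requires. (Two smaller slips: when both neighbours of $E_0$ are $(-2)$-curves the image \emph{is} a cycle, but its content is $c(I)+1$, not $c(I)-1$, since both neighbours become $(-1)$-curves; and your base case list at $d=7$ contains degree-$8$ surfaces, while at $d=7$ the negative curves form a chain, so that case is vacuous.)

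The paper avoids all of this by never touching the cycle. When $c(I)>d$ it produces a $(-1)$-curve $E\notin I$ with $E\cdot C=0$, where $C=\sum_{C_i\in I}C_i$: since $(K_Y+C)^2=d-c(I)<0$ while $h^0(Y,\mathcal{O}_Y(K_Y+C))\ge 1$ by Riemann--Roch, the class $K_Y+C$ has a fixed part $F$ with $F\cdot(K_Y+C)<0$; because $C$ is nef and every $(-2)$-curve meets $K_Y+C$ nonnegatively, some $(-1)$-curve in $F$ satisfies $E\cdot C=0$. Contracting such an $E$ leaves $I$ verbatim as a cycle of the same content on a weak del Pezzo surface of degree one higher; iterating until the degree equals $c(I)$ and then blowing the points back up, Lemma \ref{induction_lemma} (with Lemma \ref{coffin4} for the exceptional incidence in degree $4$) lowers the content by one at each step back down to $d$. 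This existence of a $(-1)$-curve disjoint from the cycle is the key idea missing from your proposal; without it, your induction cannot guarantee a cycle on the higher-degree surface and so does not get off the ground.
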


\begin{proof}
    If $c(I)=d$, then $J=I$. Suppose that $c(I) > d$. Denote $C = \sum \limits_{C_i \in I} C_i $.
    
    \textbf{Step 1.} 
    We prove that there exists a $(-1)$-curve $E$ such that $E \notin I$ and $E\cdot C = 0$ . 
    Indeed, we have $(C+K_Y)^2 = d-c(I) < 0$ and 
    \begin{equation*}
        h^0(Y,\mathcal{O}_Y(C+K_Y)) \ge h^1(Y,\mathcal{O}_Y(C+K_Y)) + 1 \ge 1.
    \end{equation*} Hence, $C+K_Y$ has a nonempty fixed component $F$ and a movable component $M$ such that $C+K_Y = F+M$. Then $F\cdot (C+K_Y) < 0$, otherwise $(C+K_Y)^2 \ge 0$, which is a contradiction.
    Note that $C$ is nef. Indeed, for all curves $C_i$ from $I$ we have $C_i\cdot C \ge 0$.
    Note that $F$ consists of $(-1)$-curves $E_i$ and $(-2)$-curves $A_j$, we have $F = \sum e_iE_i + \sum a_jA_j$, where $0 \le e_i, a_j$.
    We have $A_j\cdot (C+K_Y) = A_j\cdot C \ge 0$, hence, there exists $E_i$ such that $E_i\cdot (C+K_Y) < 0$. Because $C$ is nef, we have
    \begin{equation*}
        0 > E_i\cdot(C+K_Y) = E_i\cdot C - 1,
    \end{equation*}
    from that follows that $E_i\cdot C = 0$. Step 1 is finished.

    \textbf{Step 2.}
    Contract $E_i$ which is found in the step 1 and apply the remark \ref{weak_contraction} to contract $Y \to Z$, apply the step $1$ again for a weak del Pezzo surface $Z$ with $K_Z^2 = K_Y^2 + 1$. The content of a cycle does  not change within the contraction, so step $1$ is applied until the degree of a surface becomes equal to $c(I)$. Then the result follows from lemma \ref{induction_lemma} and lemma \ref{coffin4}.

\end{proof}

\begin{cor}\label{result3}
    When $d \ge 3 $, if $D(Y)$ is snc and if $D(Y)$ contains a cycle, there exists $1$-complement $D=\sum D_i \equiv -K_Y$.
\end{cor}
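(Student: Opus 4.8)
The plan is to read this off from the two lemmas just proved. Given a cycle $I \subset D(Y)$ and the hypothesis $d \ge 3$, I would first invoke Lemma~\ref{cyclic_d} to replace $I$ by a cycle $J \subset D(Y)$ with content $c(J) = d$. Then I would apply Lemma~\ref{exists} in the equality case $c(J) = d$, which yields the linear equivalence
\begin{equation*}
    -K_Y \sim \sum_{C_i \in J} C_i \;=:\; D.
\end{equation*}
Thus $D = \sum D_i$ is an effective integral divisor, each component $D_i$ being a curve of $D(Y)$ taken with coefficient $1$, and $K_Y + D \sim 0$, in particular $K_Y + D \equiv 0$.

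The only remaining point is to check that the pair $(Y,D)$ is log-canonical, and this is exactly where the assumption that $D(Y)$ is snc is used. Since $\mathrm{supp}(D) \subseteq \mathrm{supp}(D(Y))$, the divisor $D$ is reduced with simple normal crossing support: each $D_i \cong \mathbb{P}^1$ is smooth and any two components meet transversally. Concretely a cycle is a ``loop'': when $|J| \ge 3$, consecutive components meet in a single node and non-consecutive components are disjoint, while when $|J| = 2$ the two components meet (transversally, by the snc hypothesis, so at two distinct points). In every case $D$ is an snc divisor with all coefficients equal to $1$, hence $(Y,D)$ is log-canonical: for an snc pair on a smooth surface with $\lfloor D \rfloor = D$ one checks directly, by blowing up the intersection points, that every log discrepancy equals $0 \ge -1$. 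Therefore $D = \sum D_i \equiv -K_Y$ is the desired $1$-complement.

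I do not expect a genuine obstacle here: the corollary is a bookkeeping consequence of Lemma~\ref{cyclic_d} and Lemma~\ref{exists}. The one step worth spelling out — and the ``hardest'' part, such as it is — is the verification that the cycle divisor forms a log-canonical pair with $Y$, which follows immediately from the snc hypothesis together with the standard fact that a reduced snc boundary is always log-canonical.
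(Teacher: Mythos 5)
Your proposal is correct and follows exactly the route the paper intends: Lemma~\ref{cyclic_d} produces a cycle of content $d$, Lemma~\ref{exists} gives $-K_Y \sim \sum_{C_i \in J} C_i$, and the snc hypothesis on $D(Y)$ guarantees the reduced nodal cycle is a log-canonical boundary, hence a $1$-complement. Your explicit verification of log-canonicity is the only part the paper leaves implicit, and it is the right check to make.
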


\begin{remark}\label{bad_blowup}
    Lemma \ref{cyclic_d} does not hold for $d=2$, there is the only one weak del Pezzo surface of degree $2$ with the all cycles of the content at least $3$.
\end{remark}

\begin{lem}  \label{coffin3}  
Suppose  that $d=3$,  $D(Y)$ is snc, there is a cycle $I$ and $c(I)=3$, also $p=C_1\cap C_2$ where $C_i$ are $(-1)$-curves and $C_i\not\subset I$. Suppose that $C_1$ and $C_2$ cannot be included into cycles with content $3$.  Let $\phi \colon Z=Bl_p(Y) \to Y$ be the blow-up at a point $p$.
Then $\sigma'(Z)=2$ and $Z$ is the unique surface from the remark \ref{bad_blowup}.
\end{lem}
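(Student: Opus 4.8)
The plan is to blow up $p$, to describe the resulting surface $Z$ explicitly enough to recognise it as the surface of Remark~\ref{bad_blowup}, and then to compute $\sigma'(Z)$.

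First I would record the geometry of $Z=Bl_p(Y)$. Since $D(Y)$ is snc and $p\in C_1\cap C_2$, no curve of $D(Y)$ other than $C_1,C_2$ passes through $p$ and $C_1\cdot C_2=1$ (Lemma~\ref{intersection_minus_one_curves}); in particular $p$ lies on no $(-2)$-curve, so $Z$ is again a weak del Pezzo surface, of degree $2$, as in the set-up of Lemma~\ref{induction_lemma}. Writing $E$ for the exceptional curve of $\phi$ and $C_1',C_2'$ for the strict transforms of $C_1,C_2$, one has $(C_i')^2=-2$, $C_1'\cdot C_2'=0$, $E\cdot C_1'=E\cdot C_2'=1$, while $I$ (which is $\sim -K_Y$ by Lemma~\ref{exists}, as $c(I)=3=d$) lifts isomorphically to a cycle of content $3$ on $Z$. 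So $Z$ carries a chain $C_1'-E-C_2'$ (two $(-2)$-curves through a $(-1)$-curve) disjoint from a content-$3$ cycle, and $D(Z)$ is again snc.

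The algebraic input is an analysis of $|-K_Y-C_1-C_2|$. One computes $-K_Y\cdot(-K_Y-C_1-C_2)=1$, $(-K_Y-C_1-C_2)^2=-1$, and $h^2(-K_Y-C_1-C_2)=h^0(2K_Y+C_1+C_2)=0$ (as $-K_Y\cdot(2K_Y+C_1+C_2)=-4<0$ and $-K_Y$ is nef), so Riemann--Roch gives $h^0(-K_Y-C_1-C_2)\ge 1$. For any effective $G$ in this system we have $-K_Y\cdot G=1$, and since $-K_Y$ is nef and big, $G$ has a single $(-1)$-curve component $E_G$ of multiplicity $1$ with all its remaining components $(-2)$-curves; write $G=E_G+\Delta$. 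If $\Delta=0$ then $-K_Y\sim C_1+C_2+E_G$, and expanding $(C_1+C_2+E_G)^2=(-K_Y)^2$ (with $(C_1+C_2)^2=0$, $E_G^2=-1$) gives $E_G\cdot(C_1+C_2)=2$; by Lemma~\ref{intersection_minus_one_curves}, $E_G\cdot C_1=E_G\cdot C_2=1$, so $\{C_1,C_2,E_G\}$ is a cycle of content $3$ through $C_1$ and $C_2$, contradicting the hypothesis. Hence $\Delta\neq 0$. Pulling back by $\phi$ and using that among the curves on the right only $C_1,C_2$ pass through $p$ (each with multiplicity $1$), we get
\[
-K_Z\ \sim\ C_1'+C_2'+E+E_G'+\Delta',
\]
where $E_G'$ is still a $(-1)$-curve and $\Delta'\neq 0$ is an effective sum of $(-2)$-curves.

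Next I would identify $D(Z)$ by running, in degree $3$, the bookkeeping of Lemma~\ref{coffin4}: each $C_i$ meets the cycle $I$ in a single point of a single curve of $I$, and, producing base-point-free conics supported on circuits of $I$ with Lemma~\ref{lem34} and steering them through $p$ with Lemma~\ref{genus_zero} exactly as in Lemma~\ref{coffin4}, every admissible position of $p$ either produces a cycle of content $3$ through $C_1,C_2$ (forbidden), or, after blowing up $p$, a cycle of content $<2$ (forbidden by Lemma~\ref{exists}), with the single exception of one configuration. That configuration, together with the decomposition $-K_Y\sim C_1+C_2+E_G+\Delta$, pins down all of $D(Y)$ and hence, via $\phi$, all of $D(Z)$. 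One then checks that $Z$ has no cycle of content $2$: such a cycle $J$ cannot be disjoint from $E$ (it would descend to a content-$2$ cycle on $Y$, impossible since $d=3$); if $J$ contains $E$ then, contracting $E$, it becomes a content-$3$ cycle through $C_1,C_2$ on $Y$, again impossible; and the remaining possibilities are excluded from the explicit list $D(Z)$ (using Lemma~\ref{intersection_minus_one_curves} to forbid two $(-1)$-curves meeting doubly). Thus all cycles of $Z$ have content $\ge 3$, so by the uniqueness asserted in Remark~\ref{bad_blowup}, $Z$ is that surface. Finally $\sigma'(Z)\le d=2$ by Lemma~\ref{kleiman}; conversely, the explicit configuration shows $\Delta'$ is reduced and $C_1'+C_2'+E+E_G'+\Delta'$ is an snc anticanonical divisor with all coefficients $1$ and with exactly two components of self-intersection $-1$ (namely $E$ and $E_G'$), so it is a log-canonical boundary with $\sum d_i=2$ and $\sigma'(Z)\ge 2$. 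Therefore $\sigma'(Z)=2$.

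The main obstacle is the case analysis just described. In degree $4$ (Lemma~\ref{coffin4}) the analogous configuration is impossible because the obvious cycles through $C_1,C_2$ already have content $d=4$, giving an immediate contradiction with Lemma~\ref{exists}; in degree $3$ those cycles have content $4$ or $5$, so one gets no free contradiction and must rule out, configuration by configuration, the existence of a content-$3$ cycle through $C_1,C_2$ --- and one must be careful to enumerate \emph{all} the negative curves of $Z$, since both the absence of content-$2$ cycles and the snc-ness used in the lower bound for $\sigma'$ depend on having the complete picture of $D(Z)$.
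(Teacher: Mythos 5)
There is a genuine gap, and it sits exactly where your argument has to deliver: the lower bound $\sigma'(Z)\ge 2$. You claim that the effective member $G\in|-K_Y-C_1-C_2|$ has the form $E_G+\Delta$ with $E_G$ a $(-1)$-curve \emph{distinct from} $C_1,C_2$, with $\Delta$ a \emph{reduced} sum of $(-2)$-curves, and that consequently $-K_Z\sim C_1'+C_2'+E+E_G'+\Delta'$ is a reduced snc anticanonical divisor with two $(-1)$-curve components, giving an integral boundary with $\sum d_i=2$. In the one configuration that actually survives the hypotheses (the paper pins it down as $I=\lbrace E_1,A_1,E_2,A_1',E_3,A_1'',A,A_1'''\rbrace$ with $(C_1,A)=(C_2,E_2)=1$, $D(Y)=I\cup C_1\cup C_2$; concretely $Y$ is a del Pezzo of degree $6$ blown up at two opposite vertices of the hexagon and at a point of the side $A$, with $C_1$ the last exceptional curve and $C_2$ the strict transform of the conic through that point), one can compute the linear system explicitly: its unique member is $G=C_1+2A+A_1''+A_1'''$. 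So $E_G$ \emph{is} $C_1$, $\Delta$ is \emph{not} reduced, $\mathrm{mult}_p(C_1+C_2+G)=3$, and the pullback reads $-K_Z\sim 2C_1'+C_2'+2E+2A+A_1''+A_1'''$, which is not a boundary at all ($E$ has coefficient $2$). In fact no integral boundary on $Z$ realizes $\sum d_i=2$: a short lattice check shows $-K_Z-E_a-E_b$ is never an effective sum of $(-2)$-curves for any two $(-1)$-curves $E_a,E_b$ of $Z$. This is precisely why the paper certifies $\sigma'(Z)=2$ with the fractional $2$-complement $D=C_1+C_2+E_2+A+\tfrac{1}{2}(A_1+A_1'+A_1''+A_1''')\equiv -K_Y$, whose pullback $\phi^{-1}_*D+E$ has coefficient $1$ on $E$ and on $E_2$. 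Your step ``$\Delta\neq 0$'' is fine, but it does not exclude $E_G\in\lbrace C_1,C_2\rbrace$, and that is exactly the case that occurs; the whole final paragraph of your argument collapses with it.

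Separately, the middle of your proposal defers rather than proves the other essential ingredient: the classification of the configuration. Saying that the bookkeeping of Lemma \ref{coffin4}, run in degree $3$, ``either produces a forbidden cycle or, with a single exception, pins down $D(Y)$'' is an assertion of the conclusion, not an argument; in degree $3$ the obvious cycles through $C_1,C_2$ have content $4$ or $5$, so there is no quick contradiction, and the paper has to work through a sequence of applications of Lemma \ref{lem34} (to divisors such as $E_1+\sum A_i+E_2$ and $A_k+2E_2+A_1'$) to prove $A\cdot E_1=E_2\cdot E_3=0$, $k=l=m=n=1$ and $D(Y)=I\cup C_1\cup C_2$, which is also what establishes the uniqueness statement of Remark \ref{bad_blowup} rather than quoting it. To repair your proof you would need to (i) actually carry out that case analysis, and (ii) replace the claimed integral $1$-complement on $Z$ by a fractional boundary of the paper's type (or some other explicit divisor) and verify log-canonicity, since the integral one you propose does not exist.
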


\begin{proof}
By $...$ we mean that there are possibly $(-2)$-curves instead of dots.
Suppose that 
\begin{equation*}
    I=\lbrace E_1, A_1, \dots, A_k , E_2, A'_1, \dots, A'_l, E_3, A''_1, \dots, A''_m, A, A'''_1, \dots, A'''_n \rbrace , \quad 0 \le k,l,m,n.
\end{equation*}
If the index is zero, the corresponding $(-2)$-curves are absent.
Assume that $(C_1,A)=(C_2,E_2)=1$, otherwise apply the argument of step 1 from lemma \ref{coffin4} to build the cycle.
Denote $L \sim E_1 + \sum_{i=1}^k A_i + E_2$. We have $C_1\cdot L=0$, $L^2=0$ and $-K_Y\cdot L = 2$, then apply ii) of  lemma \ref{lem34}  for the divisor $L-C_1$. We have $L \sim C_1 + \dots + E$, where $E$ is a $(-1)$-curve.
$E \neq E_1$, because $L-C_1$ is the unique divisor. Also $E \neq C_2$, because $L\cdot C_2 = 1$. Analogously, $E \neq E_2$ because of the uniqueness of the decomposition of $L-E_2$, and $E \neq C_1$ because $L\cdot C_2 = 1$.
If $E \notin I$, then $E$ intersects $I$ in $E_3$ or $A'_i$ and then $J = \lbrace C_1, \dots, E_3, \dots \rbrace$ is a cycle which satisfies $c(J)=3$ and includes $C_3$, which is a contradiction.
The last option is when $E=E_3$. Then 
\begin{equation*}
    L = C_1+A+A''_1+\dots+A''_m + E_3,
\end{equation*} 
hence, because $L\cdot E_1 = L\cdot E_2 =0$, we have
    $A\cdot E_1 = E_3 \cdot E_2 = 0$.
Analogously, repeating the proof for a divisor $\tilde{L} = E_2 + \dots + E_3$, we have
    $A\cdot E_3 = E_1 \cdot E_2 = 0$.
We have proved that $1 \le k,l,m,n$. 
Let us prove that $1 = k,l,m,n$.
Take the divisor 
\begin{equation*}
    L = A_k + 2E_2 + A'_1 \tag{$\star$} \label{first},
\end{equation*} 
which satisfies the conditions of lemma \ref{lem34}. We have $L\cdot C_2 = 2$ and $L\cdot C_1 = 0$ and $L$ is base point free by iv) of lemma \ref{lem34}.
Let us prove that 
\begin{equation*}
    L \sim 2C_1 + 2A + A''_m + A'''_1. \tag{$\star \star$} \label{second}
\end{equation*} 
Indeed,
\begin{equation*}
    L\cdot A = L\cdot A''_m = L\cdot A'''_1 = 0,
\end{equation*} which is seen from \eqref{first}. Then $A, A''_m, A'''_1 \subset \mathrm{supp}(L-C_1)$ and one can apply iv) of lemma \ref{lem34} for $L$ to get the coefficients of \eqref{second}.
From that immediately follows that $A'''_1\cdot E_1 = A_k\cdot E_1$ and $A''_m\cdot E_3 = A'_1\cdot E_3$.
To finish the proof, let us look at the divisor $\hat{L}=C_2+E_2$. We have $\hat{L}\cdot E_1 = \hat{L}\cdot E_3 = 0$ and one can apply ii) of lemma \ref{lem34} to show that
\begin{equation*}
\hat{L}\sim E_3 + A''_1 + \dots + A''_m + A + A'''_1 + \dots + A'''_n + E_1.
\end{equation*}
Then $E_1\cdot A_k = \hat{L}\cdot A_k = 1$ and $E_3\cdot A'_1 = \hat{L}\cdot A'_1 =1$. This proves that $k=l=m=n=1$ and finally,
\begin{equation*}
    I = \lbrace E_1, A_1, E_2, A'_1, E_3, A''_1, A, A'''_1 \rbrace.
\end{equation*}
By the proof above, $D(Y)$ is uniquely defined, i.e., $D(Y) = I \cup C_1\cup C_2$. Indeed, one can contract $C_1,E_1$ and $E_3$ and get the smooth del Pezzo surface $S$ of degree $6$.
To build the $2$-complement on $Z = Bl_p(Y)$, take 
\begin{equation*}
    D = C_1+C_2+E_2+A+\frac{1}{2}(A_1+A'_1+A''_1+A'''_1 ) \equiv -K_Y.
\end{equation*}
Note then $mult_p(D)=2$. Blow up $Y$ at $p$ and get $\Tilde{D}=f^{-1}(D) + E \equiv -K_Z$, where $E$ is the divisor of the blow up. We have the $2$-complement on $Z$ and $\sigma'(Z)=2$. Note that $Z$ contains only the cycles of content $3$.
The uniquiness of such a surface is finally proved 
That finishes the proof.
\end{proof}

The result of this paragraph can be expressed in the next
\begin{cor}
    Suppose that $D(Y)$ is snc and there exists a cycle $I \subset D(Y)$. Then $\sigma'(Y)=d$ and for all $p\in Y$ such that $p$ does not belong to any $(-2)$-curve, we have
    $\sigma'(Bl_p(Y))=d-1$.
\end{cor}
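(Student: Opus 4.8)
The plan is to dispatch the equality $\sigma'(Y)=d$ by assembling facts already proved, and then to obtain $\sigma'(Bl_p(Y))=d-1$ by a case analysis in the position of $p$ that feeds into the induction lemmas.

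\textbf{First equality.} Lemma \ref{kleiman} gives $\sigma'(Y)\le d$ unconditionally. For the opposite inequality, apply Lemma \ref{cyclic_d} to the given cycle $I$ to get a cycle $J\subset D(Y)$ with $c(J)=d$, and then Lemma \ref{exists} to conclude $-K_Y\sim C:=\sum_{C_i\in J}C_i$. Take $D=C$, i.e.\ all coefficients equal to $1$: then $K_Y+D\equiv 0$, the pair $(Y,D)$ is log-canonical because $\mathrm{supp}(D)$ is a subconfiguration of $D(Y)$, which is snc (a cycle of smooth rational curves meeting pairwise transversally at distinct points is a log-smooth pair with reduced boundary), and exactly the $c(J)=d$ $(-1)$-curves among the $C_i$ contribute to $\sum d_i$ in Definition \ref{sigma_prime}, the $(-2)$-curves being the $A_j$. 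Hence $\sigma'(Y)\ge d$. (This is precisely Corollary \ref{result3} combined with Lemma \ref{kleiman}.)

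\textbf{Second equality.} Write $Z=Bl_p(Y)$, $\phi\colon Z\to Y$. Since $p$ lies on no $(-2)$-curve it is a point in almost general position, so $Z$ is again a weak del Pezzo surface, $K_Z^2=d-1$, and $A(Z)$ is the set of strict transforms of curves in $A(Y)$; thus Lemma \ref{kleiman} on $Z$ gives $\sigma'(Z)\le d-1$. For $\sigma'(Z)\ge d-1$ the goal is to exhibit, as in Lemma \ref{exists}, a cycle of content $d-1$ in $D(Z)$ (equivalently a $1$-complement on $Z$). First use Lemma \ref{cyclic_d} to replace $I$ by a cycle of content $d$, so $-K_Y\sim C$; note this replacement cannot move $p$ onto a $(-2)$-curve, since the $(-2)$-curves of $Y$ are independent of the choice of cycle and $p$ avoids them by hypothesis. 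Now distinguish: if $p$ is not the intersection of two distinct $(-1)$-curves of $E(Y)\setminus I$ and $D(Z)$ is snc, Lemma \ref{induction_lemma} applies directly and yields a cycle $J\subset D(Z)$ with $c(J)=d-1$ and $\sigma'(Z)=d-1$; if instead $p=C_1\cap C_2$ with $C_1,C_2\in E(Y)\setminus I$ two $(-1)$-curves, then for $d=4$ Lemma \ref{coffin4} lets us enlarge the cycle so that $C_1,C_2$ lie on a cycle of content $4$, reducing to Case 1 of Lemma \ref{induction_lemma}, and for $d=3$ Lemma \ref{coffin3} already gives $\sigma'(Z)=2=d-1$.

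\textbf{Main obstacle.} The hard case is precisely the last one — $p$ the intersection of two $(-1)$-curves lying on no cycle of content $d$ — which is exactly what the delicate configuration analyses of Lemmas \ref{coffin4} and \ref{coffin3} are designed for. The other delicate point is that Lemma \ref{induction_lemma} is stated for snc $D(Z)$: when the configuration on $Z$ degenerates (because a ``conic'' $L$ as in Lemma \ref{lem34} through $p$ acquires a tangency with another curve of $D(Z)$), $D(Z)$ is no longer snc and one must invoke the exceptional-case statements of Section 5, whose role is to show that the mechanism of Lemma \ref{induction_lemma} still produces a cycle of content $d-1$. Granting those, the case split above is exhaustive and gives $\sigma'(Bl_p(Y))=d-1$ for every $p$ off the $(-2)$-curves.
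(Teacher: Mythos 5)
Your proposal is correct and takes essentially the same route as the paper, whose one-line proof (``apply Corollary \ref{result3} and Lemma \ref{coffin3}'') unpacks exactly to your argument: Lemma \ref{kleiman} for the upper bounds, Lemmas \ref{cyclic_d} and \ref{exists} for the $1$-complement giving $\sigma'(Y)=d$, and Lemmas \ref{induction_lemma}, \ref{coffin4}, \ref{coffin3} (with the Section 5 statements for the non-snc degenerations of $D(Z)$) for the blow-up. The only small slip --- $A(Z)$ consists of the strict transforms of curves in $A(Y)$ \emph{together with} strict transforms of $(-1)$-curves through $p$ --- is harmless, since Lemma \ref{kleiman} applies to any weak del Pezzo surface and still yields $\sigma'(Z)\le d-1$.
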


\begin{proof}
    Apply corollary \ref{result3} and lemma \ref{coffin3}.
\end{proof}

\section{The case when $D(Y)$ is not snc}
In this section we work with the cases when $D(Y)$ from the definition \ref{D(Y)} is not snc.

Suppose that a cycle $J$ with $c(J)=d$ on $Y$ is found, i.e, $D_J=\sum_{E_i \in J} E_i + \sum_{A_j \in J} A_j \sim -K_Y$. The problem arises when $(Y,D)$ is not log-canonical, so $D$ cannot be taken as a  divisor for which the estimation is sharp.
The purpose of this paragraph is to show that, nevertheless, $\sigma'(Y)=d$ in this case.
Obviously, $D$ is not snc only if $D$ is either a sum of three lines which pass through a point, or a sum of two curves which are tangent in a point, and $(Y,D)$ is log-canonical if and only if $D$ is snc.

\begin{lem} \label{notsnc3}
    If $d=3$ and $D$, which is defined above, is not snc, then $\sigma'(Y)=3$.
\end{lem}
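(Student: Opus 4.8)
The plan is to retain the bound $\sigma'(Y)\le 3$ of Lemma \ref{kleiman} and to produce a matching lower bound by exhibiting an admissible boundary of coefficient sum $3$. For any admissible $D=\sum d_iD_i+\sum a_jA_j$ one has $\sum d_i\le -K_Y\cdot\sum d_iD_i=3$, with equality only when every $D_i$ carrying a positive coefficient is a $(-1)$-curve; so a boundary attaining the value $3$ is, apart from its $(-2)$-curve part, supported on $(-1)$-curves, and the natural candidate is the cycle divisor $D_J=\sum_{E_i\in J}E_i\sim -K_Y$ of Lemma \ref{exists}, which by hypothesis does not give a log canonical pair. I would first pin down the shape of $D_J$: since $d\ge 3$, two $(-1)$-curves meet in at most one point (Lemma \ref{intersection_minus_one_curves}), and a content-$3$ cycle with more than three members cannot be non-snc at a vertex without creating a chord, which would contradict $\bigl(\sum_{C_i\in J}C_i\bigr)^2=c(J)$; hence $J=\{E_1,E_2,E_3\}$ is a triangle of $(-1)$-curves concurrent at one point $p$ (an Eckardt point of the cubic $X$). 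Observe also that no $(-2)$-curve passes through $p$, since otherwise it would, together with two of the $E_i$, form a cycle of content $2<d$, contrary to Lemma \ref{exists}.

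Because coefficient $1$ on the $E_i$ is forbidden, I would instead average content-$3$ cycles. Using the classification of weak del Pezzo surfaces of degree $3$ in \cite{Watanabe}, one shows that for such a $Y$ there is a family $J_1,\dots,J_k$ of content-$3$ cycles (with $J_1=J$) no curve of which lies in more than $k/2$ of the $J_l$, and one takes
\begin{equation*}
D=\frac1k\sum_{l=1}^k D_{J_l},\qquad D_{J_l}=\sum_{E_i\in J_l}E_i\sim -K_Y .
\end{equation*}
Then $D\equiv -K_Y$, every coefficient of $D$ is at most $\tfrac12$, and the coefficients of the $(-1)$-curves sum to $\tfrac1k\sum_l c(J_l)=3$; thus $D$ is admissible in the sense of Definition \ref{sigma_prime} and has coefficient sum $3$. (Whenever $D(Y)$ already contains a content-$3$ cycle $J'$ whose divisor is snc, one simply uses $J'$ with $k=1$, and $\sigma'(Y)=3$ is immediate from Lemma \ref{exists}.)

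It then remains to verify that $(Y,D)$ is log canonical. Outside the finitely many points where at least three curves of $\bigcup_l\mathrm{supp}(D_{J_l})$ meet, $D$ is snc; at such a point $q$ the pair is log canonical as soon as the coefficients of the branches of $D$ through $q$ sum to at most $2$, and, each coefficient being $\le\tfrac12$, this holds once at most four branches pass through $q$. That the latter always holds follows from elementary geometry of weak del Pezzo surfaces — at most three $(-1)$-curves pass through a common point (they would be coplanar in the cubic model), at most two $(-2)$-curves meet at a point because a Dynkin diagram contains no triangle, and an Eckardt point carries no $(-2)$-curve — so the verification is modelled on the one performed for smooth cubic surfaces in \cite{Cheltsov}. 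Hence $\sigma'(Y)\ge 3$, and with Lemma \ref{kleiman} this gives $\sigma'(Y)=3$. The step I expect to be the real obstacle is the construction of the family $J_1,\dots,J_k$: one must extract from \cite{Watanabe} the precise list of degree-$3$ weak del Pezzo surfaces admitting a concurrent triangle of $(-1)$-curves, treat by hand the degenerate cases with too few $(-1)$-curves (where the averaged $D$ would be forced to put coefficient $1$ on three concurrent curves), and control the local multiplicities of $\bigcup_l\mathrm{supp}(D_{J_l})$ in the presence of $(-2)$-curves, which have no counterpart in the smooth situation of \cite{Cheltsov}.
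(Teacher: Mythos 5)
Your reduction to the Eckardt configuration (a triangle $E_1,E_2,E_3$ of $(-1)$-curves concurrent at a point through which no $(-2)$-curve passes) and the upper bound via Lemma \ref{kleiman} are fine, but the core of your lower bound — the existence of content-$3$ cycles $J_1,\dots,J_k$ such that no negative curve lies in more than $k/2$ of them — is asserted, not proved, and it is exactly the step that can fail. The paper's Remark following this lemma identifies a unique degree-$3$ surface $S$ carrying three concurrent lines on which there is \emph{no} $1$-complement at all; it is obtained from the exceptional degree-$2$ surface of Remark \ref{bad_blowup} (the one all of whose cycles have content $\ge 3$) by contracting the $(-1)$-curve meeting three $(-2)$-curves. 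On such a surface the $(-1)$-curves are scarce, your fallback option ($k=1$ with an snc content-$3$ cycle) is unavailable by hypothesis, and nothing in your argument shows that enough content-$3$ cycles exist to dilute every curve's coefficient to $\le \tfrac12$; you flag this yourself as ``the real obstacle,'' which means the proof is not complete. A secondary point: even granting the family, your coefficient count only controls the $(-1)$-curve part, and you would still need to check that the $(-2)$-curve coefficients produced by the averaging stay $\le 1$ and that tangencies (not just concurrences) do not occur among members of different $J_l$.

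The paper avoids the combinatorial existence problem entirely: it contracts one of the three concurrent $(-1)$-curves, $\phi\colon Y\to Z$, so that the images $L_1,L_2$ of the other two satisfy $L_i^2=0$, $-K_Z\cdot L_i=2$; Lemma \ref{lem34} then gives an explicit decomposition of such a class into $(-1)$- and $(-2)$-curves with bounded integer coefficients, producing $-K_Z\equiv D=L_2+\sum e_iE_i+\sum a_jA_j$. Pulling back and averaging with the triangle, $C=\tfrac12\bigl(E_1+E_2+E_3+D^*\bigr)$ with $D^*=\phi^*(D)$, yields an explicit $2$-complement with $-K_Y\equiv C$ and $(Y,C)$ log-canonical, uniformly in all cases including the exceptional surface $S$. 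If you want to salvage your averaging idea, you would have to replace ``cycles'' by anticanonical divisors of the kind the paper builds from fiber classes via Lemma \ref{lem34}, i.e., supply a concrete second member of $|-K_Y|$ supported on negative curves that avoids (or meets mildly) the three concurrent lines — which is precisely what the paper's contraction trick manufactures.
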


\begin{proof}
Denote $D=\sum_{i=1}^3 \mathbb{E}_i$, where $\mathbb{E}_i$ are $(-1)$-curves from $D(Y)$ and $q = \cap_{i=1}^3 \mathbb{E}_i$. 
Assume the contraction $\phi \colon Y \to Z$, where $\phi(\mathbb{E}_3)=p$ is a point in $Z$ .
We have $L_1$,$L_2$, which are images of $\mathbb{E}_1$ and $\mathbb{E}_2$ under the contraction, so $L_i^2=0$ and $-K_Z \cdot L_i = 2$. Then, by lemma \ref{lem34}, one can replace $L_1$ with corresponding combination of $(-1)$-curves and $(-2)$-curves. We get $-K_Z\equiv D = L_2 + \sum e_iE_i + \sum a_jA_j$, when, from the same lemma, $e_i,a_j \le 2$. Now blow up $D$ at $p\in L_2$ and get $-K_Y \equiv D^* = \phi^*(D)=E_2+\sum e_i\phi^{-1}(E_i)+\sum a_j\phi^{-1}(A_j)$.
Finally, take $C = \frac{1}{2}(E_1+E_2+E_3+D^*)$.
Obviously, $-K_Y\equiv C$ and $(Y,C)$ is log-canonical. Hence, $\sigma'(Y)=3$, which finishes the proof.
\end{proof}

\begin{remark}
    In lemma \ref{notsnc3} there is the unique surface $S$ such that $D(S)$ contains three lines which intersect in a point and such that $\sigma'(S)=3$ and there is no $1$-complement on $S$. This surface can be built from the example of the remark \ref{bad_blowup} by contracting the unique $(-1)$-curve which intersects three $(-2)$-curves.
\end{remark}

\begin{lem}\label{notsnc2}
    Suppose that $d=2$ and there are two $(-1)$-curves and one $(-2)$-curve on $Y$ which intersect in a point or two $(-1)$-curves which are tangent in a point. Then $\sigma'(Y)=2$.
\end{lem}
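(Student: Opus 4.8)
The plan is to mimic the strategy of Lemma~\ref{notsnc3}, replacing the non-snc cycle by a genuinely log-canonical boundary of the same total weight. By Lemma~\ref{kleiman} we already know $\sigma'(Y)\le 2$, so it suffices to construct one log-canonical boundary $C\equiv -K_Y$ with $\sum d_i = 2$. We are in one of two situations: either there is a configuration $\{\mathbb{E}_1,\mathbb{E}_2,A\}$ of two $(-1)$-curves and a $(-2)$-curve meeting at a single point $q$ (so $\mathbb{E}_1+\mathbb{E}_2+A\sim -K_Y$ is a cycle of content $2$ that fails to be snc), or two $(-1)$-curves $\mathbb{E}_1,\mathbb{E}_2$ tangent at $q$ (again $\mathbb{E}_1+\mathbb{E}_2\sim -K_Y$, non-snc). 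The first step is to record in each case the linear equivalence $\sum_{C_i\in J}C_i\sim -K_Y$, which follows from Lemma~\ref{exists} since $c(J)=2=d$.

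The main construction is to find a \emph{second}, snc-compatible decomposition of $-K_Y$ and then average. In the tangency case, blow up $q$ to get $\psi\colon \widetilde Y\to Y$; the strict transforms $\widetilde{\mathbb{E}}_1,\widetilde{\mathbb{E}}_2$ become disjoint $(-2)$-curves meeting the exceptional $E$, and $\widetilde Y$ is a weak del Pezzo of degree $1$. On $\widetilde Y$ there is a nonsingular elliptic curve $\widetilde C\in|-K_{\widetilde Y}|$ (as in the $d=1$ lemma), and $\psi_*\widetilde C\in|-K_Y|$ is an irreducible curve with a node or cusp at $q$; pushing forward, $C:=\tfrac12\big(\mathbb{E}_1+\mathbb{E}_2+\psi_*\widetilde C\big)\equiv -K_Y$ should be log-canonical, since the only delicate point is $q$ where we have three branches with coefficient $\tfrac12$. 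Alternatively, and more in the spirit of Lemma~\ref{notsnc3}: contract one of the $(-1)$-curves, say $\phi\colon Y\to Z$ with $\phi(\mathbb{E}_2)=p$; then $L_1=\phi_*\mathbb{E}_1$ satisfies $L_1^2=0$, $-K_Z\cdot L_1=2$, so even for $d=2$ one can hope to replace $L_1$ by a base-point-free linear system away from $p$ and pull back, obtaining $D^\ast=\phi^\ast D\equiv -K_Y$ supported on curves avoiding the bad tangency, then set $C=\tfrac12(\mathbb{E}_1+\mathbb{E}_2+D^\ast)$. In the triple-point case $\{\mathbb{E}_1,\mathbb{E}_2,A\}$ the same idea applies: contract $\mathbb{E}_2$, move the image of $\mathbb{E}_1$ off $p$ using Lemma~\ref{genus_zero} or a direct Riemann--Roch computation, pull back, and average with $\mathbb{E}_1+\mathbb{E}_2+A$ to kill the non-snc point with coefficients $\tfrac12$.

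The hard part will be controlling the linear systems in degree $2$, since Lemma~\ref{lem34} explicitly fails there (the remark after it gives $L=A+C$ as a counterexample, so $h^0$ can jump to $3$ and the neat decomposition Table~1 is unavailable). Concretely, after contracting $\mathbb{E}_2$ one must show that the class $L_1$ on $Z$ (or a suitable sub-system of $|-K_Y|$) contains an effective member passing through the prescribed point with the right multiplicity yet \emph{not} creating a new non-lc point after averaging; the elliptic-curve exact sequence argument from Lemma~\ref{lem34}(i) still gives $h^0(Z,\mathcal{O}_Z(L_1))\ge L_1\cdot(-K_Z)=2$, which is enough to move the member through one general point. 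One then checks log-canonicity of $C=\tfrac12(\mathbb{E}_1+\mathbb{E}_2+D^\ast)$ locally: away from $q$ the support is snc by hypothesis on $D(Y)$ and a general choice of the moved member, and at $q$ all components have coefficient exactly $\tfrac12$, so $\mathrm{mult}_q C\le \tfrac12\cdot(\text{number of branches})$ stays within the log-canonical threshold. I expect the local lc check at $q$ and the verification that the pulled-back divisor $D^\ast$ meets $\mathbb{E}_1\cup\mathbb{E}_2$ transversally away from $q$ to be the only real obstacles; both are finite case checks given the classification of degree-$2$ weak del Pezzo surfaces in \cite{Watanabe}.
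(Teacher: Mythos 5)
Your upper bound (Lemma \ref{kleiman}) and the general idea of averaging the non-snc anticanonical cycle with a second decomposition of $-K_Y$ are the right starting point, but the boundaries you actually construct never reach $\sum d_i=2$, and that is the whole content of the lemma. Since $2=d=-K_Y\cdot D=\sum d_i\,(-K_Y\cdot D_i)$ and every component that is not a $(-2)$-curve has $-K_Y\cdot D_i\ge 1$, the sum $\sum d_i$ can get close to $2$ only if essentially all the weight sits on curves of anticanonical degree exactly $1$. Your divisor $\frac{1}{2}\bigl(\mathbb{E}_1+\mathbb{E}_2+\psi_*\widetilde C\bigr)$ puts weight $\frac{1}{2}$ on an anticanonical curve of degree $2$, so $\sum d_i=\frac{3}{2}$; the same defect occurs in your alternative version, where you move an \emph{irreducible} member of the conic class (again degree $2$), and in your treatment of the triple-point case: the average is always $\frac{3}{2}$, not $2$. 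This is exactly why the paper, after contracting one $(-1)$-curve to a degree-$3$ surface $Z$ (where Lemma \ref{lem34} is available), replaces the conic class by an \emph{integral decomposition into $(-1)$- and $(-2)$-curves} before pulling back and averaging: each unit of anticanonical degree is then carried by a curve contributing $1$ to $\sum d_i$. Note also that in the tangency case your starting identity is false: if $\mathbb{E}_1\cdot\mathbb{E}_2=2$ then $\phi_*\mathbb{E}_1\sim -K_Z$ with $(\phi_*\mathbb{E}_1)^2=3$, so after contracting one tangent curve there is no class with $L_1^2=0$, $-K_Z\cdot L_1=2$ to move at all.

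The second gap is that the tangency case is not settled by a local lc check at $q$ plus an unspecified ``finite case check'': it needs a global analysis of the degree-$3$ surface $Z$ obtained by contracting $\mathbb{E}_1$. The paper splits according to whether $D(Z)$ contains a cycle (then Lemma \ref{induction_lemma} or Lemma \ref{notsnc3} applies) or is a tree, in which case the classification of Theorem \ref{Treetheorem} enters; and for the exceptional surface $Z\cong Y_{3,4}$ no boundary of the form $\frac{1}{2}(\mathbb{E}_1+\mathbb{E}_2)+\frac{1}{2}D'$ is available (the complement index there is $4$, not $2$): the paper instead takes $D=\frac{1}{4}\phi^{-1}(C)+\frac{3}{4}(\mathbb{E}_1+\mathbb{E}_2)$ with $C$ supported on the unique $(-1)$-curve, a conic class and the $E_6$-chain of $(-2)$-curves with multiplicities up to $4$, log-canonicity being precisely $\mathrm{lct}(Y,\mathbb{E}_1+\mathbb{E}_2)=\frac{3}{4}$. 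Your proposal does not see this surface, and your local criterion ``$\mathrm{mult}_q\le\frac{1}{2}\cdot(\text{number of branches})$ stays within the threshold'' is not a valid lc test in the presence of tangencies (two tangent curves with coefficient $1$ have multiplicity $2$ at the point and are not lc), which is the very phenomenon the lemma is about. So the proposal, as it stands, proves only $\sigma'(Y)\ge\frac{3}{2}$ and misses the case that forces the paper's more delicate construction.
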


\begin{proof}
    Suppose the first case, i.e., $\mathbb{E}_1$, $\mathbb{E}_2$ and $\mathbb{A}$ intersect in $q\in Y$.
Assume the contraction $\phi \colon Y \to Z$, $\phi(\mathbb{E}_1)=p$ is a point in $Z$.
As in lemma \ref{notsnc3}, we have $-K_Z \equiv L+E$, where $L$ and $E$ are images of $\mathbb{E}_2$ and $\mathbb{A}$ under the contraction. Then the proof is exactly the same as in the previous lemma, namely, represent $L$ in a form $D=\sum e_iE_i +\sum a_jA_j$ and then blow up at $p\in E$ the divisor $E+D$, then take $C=\frac{1}{2}(\phi^*(E+D)+\mathbb{E}_1+ \mathbb{E}_2+\mathbb{A})$. It is clear that $-K_Y\equiv C$ and $(Y,C)$ is log-canonical.
\newline

Suppose the second case, i.e., $\mathbb{E}_1$ and $\mathbb{E}_2$ are tangent in $q\in Y$.
Consider the contraction $\phi \colon Y \to Z$, $\phi(\mathbb{E}_1)=p$ is a point in  $Z$ .    
Then $\phi_*(\mathbb{E}_2)\equiv -K_Z$.
Note that, as $\mathbb{E}_1+\mathbb{E}_2\equiv -K_Z$, there is no $(-2)$-curves intersecting $\mathbb{E}_1$ and, hence, no $(-1)$-curves which pass through $p\in Z$. 
If  either $D(Z)$ is not a tree or $supp(D(Z))$ is not snc, then find a cycle $I$ such that $c(I)=3$ and apply lemma \ref{induction_lemma} for the case $p \notin D(Z)$. If $I$ consists of  three $(-1)$-curves which intersect in a point, then apply lemma \ref{notsnc3}.
If $D(Y)$ is a tree, see theorem \ref{Treetheorem} below and notice that in case 
 $Y$ is isomorphic to $Y_{3.1},Y_{3.2},Y_{3.3}$, one can find a curve $L$ such that $L^2=0$ and $(-K_Y,L)=2$ and $p \in L$. Then apply lemma \ref{lem34},  write 
 \begin{equation*}
 -K_Y \equiv D = L + \sum e_iE_i + \sum a_jA_j    
 \end{equation*}
  where integer coefficients $e_i, a_j $ satisfy $e_i, a_j \le 1$. Then blow up $D$ and get 1-complement on $Y$.
If $Y=Y_{3,4}$, then take
\begin{equation*}
    C=L+E+2A_1+3A_2+4A_3+2A_4+3A_5+2A_6
\end{equation*}
and
\begin{equation*}
    D=\frac{1}{4}(\phi^{-1}(C))+\frac{3}{4}(\mathbb{E}_1+\mathbb{E}_2).
\end{equation*}
 The pair $(Y,D)$ is log-canonical because $lct(Y,\mathbb{E}_1+\mathbb{E}_2)=\frac{3}{4}$. 
\end{proof}

In the next paragraph we show that when $p \notin D(Y_{3,4})$,the least complement index for the boundary on $Y=Bl_p(Y_{3,4})$ is $4$.

To finish this paragraph, we need one more statement.

\begin{lem}
    Suppose that $d=3$ and $p=\cap_{i=1}^3 C_i$, $C_i\in D(Y)$ are $(-1)$-curves. Than for any point $q$ that does not lie on a $(-2)$-curve and for $\phi \colon Z=Bl_q(Y) \to Y$ we have $\sigma'(Z)=2$.
\end{lem}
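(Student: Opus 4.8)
Since $K_Z^2=K_Y^2-1=2$, Lemma \ref{kleiman} gives $\sigma'(Z)\le 2$, so the task is to produce a log-canonical boundary $D_Z\equiv -K_Z$ whose coefficient sum (over the components that are not $(-2)$-curves) equals $2$. I begin by recording the geometry on $Y$: as $C_1,C_2,C_3$ meet pairwise only at $p$, the set $\{C_1,C_2,C_3\}$ is a cycle of content $3$, so Lemma \ref{exists} gives $-K_Y\sim C_1+C_2+C_3$; moreover each of $C_1+C_2$, $C_1+C_3$, $C_2+C_3$ has square $0$ and meets $-K_Y$ in $2$, hence by Lemma \ref{lem34}(i) spans a base-point-free pencil whose reducible members are of the three types of Table $1$, and the member of $|C_i+C_j|$ through $p$ is $C_i+C_j$ itself.

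Suppose first $q\ne p$; relabel so that $q\in C_1$ if $q$ meets one of the $C_i$ at all (it meets at most one, since the $C_i$ meet only at $p$). Let $L$ be the member of $|C_2+C_3|$ through $q$, so $-K_Y\sim L+C_1$ and, on $Z=Bl_q(Y)$ with exceptional curve $E$,
\begin{equation*}
-K_Z\sim\widehat L+\widehat{C_1}+\bigl(\operatorname{mult}_q L+\operatorname{mult}_q C_1-1\bigr)E .
\end{equation*}
If $L$ is irreducible then $\widehat L$ is a $(-1)$-curve and $\widehat L\cdot\widehat{C_1}=L\cdot C_1-\operatorname{mult}_qL\cdot\operatorname{mult}_qC_1$. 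When $q\notin C_1$ this reads $-K_Z\sim\widehat L+\widehat{C_1}$ with $\widehat L,\widehat{C_1}$ two $(-1)$-curves meeting in a length-$2$ scheme, so either they meet transversally in two points and $D_Z=\widehat L+\widehat{C_1}$ is snc of coefficient sum $2$, or they are tangent and Lemma \ref{notsnc2} already gives $\sigma'(Z)=2$. When $q\in C_1$ it reads $-K_Z\sim\widehat L+\widehat{C_1}+E$ with $\widehat{C_1}$ a $(-2)$-curve and $\widehat L,E,\widehat{C_1}$ forming a triangle, which is an snc cycle of content $2$ unless the three intersection points collapse to one, in which case Lemma \ref{notsnc2} applies. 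If $L$ is reducible, by Table $1$ the point $q$ sits on a $(-1)$-component of $L$: for a Type-$1$ fibre the same bookkeeping gives an snc cycle of content $2$ (using Lemma \ref{intersection_minus_one_curves} to bound the intersections with $C_1$, and Lemma \ref{notsnc2} at any triple point), whereas for Type-$2$ and Type-$3$ fibres one must first replace $L$ by a member of another pencil $|C_i+C_j|$, or pass to a different cycle of content $3$ through that $(-1)$-component, so as to avoid a coefficient $2$ on a $(-2)$-curve.

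The hard part is $q=p$. Every member of every $|C_i+C_j|$ through $p$ has multiplicity $2$ there, so the pull-back carries coefficient $2$ on $E$ and is not a boundary; instead one must use a reducible fibre of one of the pencils that avoids $p$. If some $|C_i+C_j|$ has a reducible fibre $G_1+G_2$ with $G_1,G_2$ two $(-1)$-curves and $p\notin G_1\cup G_2$, then with $\{i,j,k\}=\{1,2,3\}$ one has $-K_Y\sim C_k+G_1+G_2$, hence $-K_Z\sim\widehat{C_k}+\widehat{G_1}+\widehat{G_2}$ with $\widehat{C_k}$ a $(-2)$-curve and $\widehat{G_1},\widehat{G_2}$ two $(-1)$-curves, so $D_Z=\widehat{C_k}+\widehat{G_1}+\widehat{G_2}$ has coefficient sum $2$ and is snc unless $C_k$ passes through $G_1\cap G_2$, where again Lemma \ref{notsnc2} finishes. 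The remaining, genuinely delicate, point is to show that such a reduced reducible fibre avoiding $p$ exists unless $Y$ is the exceptional surface $S$ of the remark following Lemma \ref{notsnc3}; in that last case $Z=Bl_p(Y)$ is precisely the unique degree-$2$ surface of Remark \ref{bad_blowup}, for which $\sigma'(Z)=2$ was already established in Lemma \ref{coffin3}. Both this and the Type-$2$/Type-$3$ bookkeeping above ultimately reduce to a finite check against the classification of degree-$3$ weak del Pezzo surfaces.
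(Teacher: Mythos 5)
Your skeleton (upper bound from Lemma \ref{kleiman}, the cycle $C_1+C_2+C_3\sim -K_Y$ from Lemma \ref{exists}, the pencils $|C_i+C_j|$ from Lemma \ref{lem34}, and the appeal to Lemma \ref{notsnc2} for degenerate incidences on $Z$) is sound, and the cases $q\notin\cup C_i$ and $q\in C_1$ with an irreducible member through $q$ are handled correctly. But there is a genuine gap exactly where the lemma is hard, namely $q=p$ (and the Type-$2$/Type-$3$ sub-case of $q\in C_1$). Your argument there needs a reducible member of some $|C_i+C_j|$ that is a sum of two distinct $(-1)$-curves avoiding $p$, and you only assert the dichotomy ``such a fibre exists unless $Y$ is the exceptional surface of the remark after Lemma \ref{notsnc3}'', deferring it to an unperformed ``finite check''. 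That dichotomy is the whole content of the case and, as stated, it is too rigid: the reducible members available may all be of Type $2$ or $3$ of Table $1$ (coefficient $2$ on a $(-1)$-curve) or chains containing $(-2)$-curves, in which situations your coefficient-$1$ boundary $\widehat{C_k}+\widehat{G_1}+\widehat{G_2}$ simply does not exist, yet $\sigma'(Z)=2$ still has to be proved. The same unresolved issue is hidden in your sentence ``one must first replace $L$ by a member of another pencil, or pass to a different cycle'' for $q\in C_1$.

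The paper closes these cases by a different device: half-integral ($2$-complement) boundaries, which are insensitive to the fibre type. For $q\in C_1$, $q\neq p$, it takes $\widetilde D=\frac{1}{2}\pi^{-1}(L_1+2C_1+C_2+C_3)$ with $L_1\in|C_2+C_3|$ through $q$; the coefficient $\frac12$ turns a Type-$2$/$3$ multiplicity $2$ into an admissible coefficient $1$ and also absorbs a possible tangency of $L_1$ and $C_1$. For $q=p$ it splits according to whether there is a second cycle of content $3$ (your ``reducible fibre'' situation, treated via the cycle or an explicit half-integral boundary), and in the remaining case --- $I$ the unique minimal cycle --- it does not search for a special fibre at all: since $\rho(Y)=7$ there is a further $(-1)$-curve $E_1$ meeting some $C_i$, Lemma \ref{lem34}(ii) produces $L_1\sim C_2+C_3$ with $E_1\subset\mathrm{supp}(L_1)$, and $D=\frac12(2C_1+C_2+C_3+L_1)$ gives the required log-canonical boundary on $Z$ whatever the decomposition of $L_1$ is. If you want to keep your route, you must actually prove your existence dichotomy and extend your construction to fibres with $(-2)$-components and to Type-$2$/$3$ members; otherwise adopt the half-coefficient trick, which makes the case analysis uniform.
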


\begin{proof}

    Consider $\pi \colon Z=Bl_q(Y) \to Y$.
    
    \textbf{Case 1} : suppose that $q$ is not a point of intersection of three $(-1)$-curves. Suppose that $q\not\in \cup_{i=1}C_i$. Because $C_1,C_2,C_3$ together form a cycle, apply lemma \ref{induction_lemma}, if $q$ belongs to two curves $E_1, E_2$, apply lemma \ref{coffin3}.

    \textbf{Case 2}: suppose that $q\in C_1$ and $q \not = p$. Take $L_1 \sim C_2+C_3$ such that $q \in L_1$. Note that $-K_Y \equiv L_1 + C_1$. Note that because $L_1\cdot C_1=2$, $L_1$ and $C_1$ may be tangent at $q$. 
    As the boundary on $Z$ take 
    \begin{equation*}
        \Tilde{D}=\frac{1}{2}\pi^{-1}(L_1+2C_1+C_2+C_3).
    \end{equation*} 
    Pair $(Z,\Tilde{D})$ is log-canonical. 

    \textbf{Case 3}: $q=p$. Two options are possible: either there is one more cycle $J$ on $Y$ with $c(J)=3$ or $I=\lbrace C_1,C_2,C_3\rbrace$ is the unique cycle with the minimal content.
    Notice that if $J$ exists,  $J$ cannot contain two curves from $I$ because then the third one intersects $J$ in $p$ with multiplicity $2$, which is impossible.
 
    \textbf{Case 3.1}: If one of the curves of $I$ is in $J$ (say, $C_1$), then blow up $J$ and get the cycle of length $2$ on $Y$ which contains at least three curves and they do not intersect in a point, hence, $(Z,\pi^{-1}C_J)$ is log-canonical, where $C_J$ is a sum of all curves in $J$.
    
    \textbf{Case 3.2}: If the second cycle $J$ does not contain any of $C_i$, then curves $C_i$ intersect $(-1)$-curves of $J$. Indeed, $(C_i,J)=1$ and because $C_1+C_2+C_3\equiv -K_Y$, then $(-2)$-curves do not intersect $I$. Suppose that $E_1,E_2,E_3$ are $(-1)$-curves of $J$ and $C_i\cdot E_i=1$ and $C_i\cdot E_j = 0$ if $i\neq j$. Note that a divisor $L=C_1+C_2$ satisfies $L\cdot E_3 = 0$, hence, $L$ admits a decomposition of the form $L = E_3 + \dots + E$ by lemma \ref{lem34}. If $E \neq E_3$, then $E_3,\dots,E,C_3$ form a cycle with the minimal content which contains $C_3$, this is discussed in the case 3.2. So suppose that $E=E_3$ and $L = 2E_3 + \dots$. Then, because $L\cdot E_1 = L\cdot E_2 = 1$, we have $E_3\cdot E_1 = E_3\cdot E_2 = 0$. Analogously, taking $L = C_1+C_3$, we have in this case $E_1\cdot E_2 = 0$. Finally the structure of $J$ is as follows:
    \begin{equation*}
        J = \lbrace E_1, A_1,\dots,A_k,E_2,A'_1,\dots,A'_l,E_3,A''_1,\dots,A''_m\rbrace, 1 \le k,l,m
    \end{equation*}
    One can take 
    \begin{equation*}
        D = C_1 + E_1 + \frac{1}{2}(C_2+C_3+\sum_{i=1}^k A_i + \sum_{j=1}^l A''_j)
    \end{equation*} and get $\hat{D} = \pi^{-1}(D)+E$, where $E$ is the exceptional divisor of the blow up. One can check that $K_Z+\hat{D} \equiv 0$ and $(Z,\hat{D})$ is log-canonical, so $\sigma'(Z)=2$.

   \textbf{Case 3.3} : $I$ is the unique cycle on $Y$ such that $c(I)=3$.
    There is at least one more $(-1)$-curve $E_1\in D(Y)$, otherwise $D(Y)$ contains just $C_1,C_2,C_3$ which is impossible because $\rho(Y)=10-d=7$.
Suppose that $(E_1,C_1)=1$. Obviously, $p \notin E_1$. Then, because $(C_2+C_3,E_1)=0$, by lemma \ref{lem34}, there exists $L_1 \sim C_2+C_3$ such that $E_1 \subset supp(L_1)$.One can write the  decomposition 
\begin{equation*}
    L_1=e_1E_1+ \sum d_iD_i,\quad 1 \le e_1 \le 2, \quad 0\le d_i\le 2 \text{for} \quad e_1,d_i \in \mathbb{Z}, \quad D_i \in D(Y).
\end{equation*} 
    Take 
    \begin{equation*}
        D=\frac{1}{2}(2C_1+C_2+C_3+L_1).
    \end{equation*} Take $\hat{D}= \pi^{-1}(D)$. Then $(Z,\hat{D})$ is log-canonical, so $\hat{D}$ is the 2-complement on $Z$ and $K_Z+\hat{D}\equiv 0$, which means that $\sigma'(Z)=2$.
\end{proof}

\section{The case when $D(Y)$ is a tree}
In this section we work with the case when $D(Y)$ from the definition \ref{D(Y)} is a tree.

\begin{defi}\label{denominator}
    $n(Y)$, or the $\mathrm{denominator}$ of $D(Y)$ is 
    \begin{equation*}
        \min_{D}\lbrace \max_{\lbrace d_i\rbrace}(d_i) \colon  -K_Y \equiv D = \sum d_iD_i, D_i \in D(Y), 0\le d_i \rbrace .
    \end{equation*}
\end{defi}
\begin{lem}
Let $e = \max(\sum e_j)$ be the maximum.                                
The following inequality holds:
\begin{equation*}
    \frac{1}{2}(d+e) \ge \sigma'(Y) \ge 1 + \frac{d-1}{n(Y)}
\end{equation*}
    Here the maximum is taken by all 
    \begin{equation*}
        D = \sum d_iD_i + \sum e_jE_j + \sum a_kA_k, D \equiv -K_Y, 0 \le d_i,e_j,a_k,
    \end{equation*}
    where prime divisors $D_i, E_j, A_k $ satisfy
    \begin{equation*}
    D_i^2\ge 0, E_j^2=-1, A_k^2=-2.
    \end{equation*}
\end{lem}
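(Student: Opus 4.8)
The plan is to establish the two inequalities separately; the upper bound is almost immediate, while the lower bound rests on an explicit construction that uses the tree hypothesis in an essential way.

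\emph{Upper bound.} I would first show that $e\ge d$, which combined with Lemma~\ref{kleiman} gives $\sigma'(Y)\le d\le\tfrac12(d+e)$ (the last step being just $d\le e$). Since $-K_Y$ is effective and, by Proposition~\ref{MoriCone}, $\overline{NE(Y)}$ is spanned by the classes of the curves in $D(Y)$, one can write $-K_Y\equiv\sum_k a_kA_k+\sum_j e_jE_j$ with $A_k\in A(Y)$, $E_j\in E(Y)$ and $a_k,e_j\ge 0$. Intersecting with $-K_Y$ and using $-K_Y\cdot A_k=0$, $-K_Y\cdot E_j=1$ gives $\sum_j e_j=(-K_Y)^2=d$; as this divisor is admissible in the supremum defining $e$, we conclude $e\ge d$. (In fact $e=d$, since $\sum_j e_j\le -K_Y\cdot D=d$ for every competing $D$, but this is not needed.)

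\emph{Lower bound.} I would fix a decomposition $-K_Y\equiv N:=\sum_i d_iD_i$, $D_i\in D(Y)$, $d_i\ge 0$, realizing the denominator, so that $\max_i d_i=n:=n(Y)$; such a decomposition exists by the argument above, and $n\ge 1$. Since $d\ge 2$ the system $|-K_Y|$ is base point free, so by Bertini's theorem a general member $C\in|-K_Y|$ is a smooth irreducible curve, and one may take $C$ to meet every curve of $D(Y)$ transversally and to avoid the finitely many pairwise intersection points of curves in $D(Y)$. Set
\begin{equation*}
    B:=\Bigl(1-\tfrac1n\Bigr)C+\tfrac1n\,N .
\end{equation*}
Then $B\equiv\bigl(1-\tfrac1n\bigr)(-K_Y)+\tfrac1n(-K_Y)=-K_Y$, and every coefficient of $B$ lies in $[0,1]$, namely $1-\tfrac1n\le 1$ along $C$ and $d_i/n\le 1$ along $D_i$. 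The main obstacle is to verify that $(Y,B)$ is log-canonical: this is exactly where the hypothesis that $D(Y)$ is a tree enters, since a tree has no triangle (three curves through a common point) and no double edge (a tangency), whence $\mathrm{supp}(D(Y))$ is snc; by the genericity of $C$ the divisor $\mathrm{supp}(D(Y))\cup C$ is snc as well, so $\mathrm{supp}(B)$ is contained in an snc divisor and all coefficients of $B$ are $\le 1$, which forces $(Y,B)$ to be log-canonical.

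\emph{Conclusion.} Granting this, $B$ is admissible in the supremum defining $\sigma'(Y)$. Intersecting $N\equiv -K_Y$ with $-K_Y$ as above shows $\sum_{D_i\in E(Y)}d_i=(-K_Y)^2=d$, hence $\sum_i d_i\ge d$, and therefore
\begin{equation*}
    \sigma'(Y)\ge\Bigl(1-\tfrac1n\Bigr)+\tfrac1n\sum_i d_i=1+\frac{\sum_i d_i-1}{n}\ge 1+\frac{d-1}{n}.
\end{equation*}
Apart from the log-canonicity check, the points requiring care are: that $|-K_Y|$ is base point free and carries a smooth anticanonical member for $d\ge 2$ (for $d=1$ the claim reduces to $\sigma'(Y)\ge 1$, already known); the identity $\sum_{D_i\in E(Y)}d_i=d$; and the fact that $n(Y)\ge 1$, needed so that the coefficients $1-\tfrac1n$ and $d_i/n$ stay in $[0,1]$.
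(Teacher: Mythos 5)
Your lower bound is essentially the paper's own argument: the paper takes exactly the divisor $\frac{1}{n(Y)}D+\bigl(1-\frac{1}{n(Y)}\bigr)\mathcal{C}$ with $\mathcal{C}\in|-K_Y|$ a smooth anticanonical member and $D$ a decomposition realizing the denominator, which is your $B$; your discussion of why the tree hypothesis gives snc support and hence log-canonicity is more detailed than the paper's (which omits this check). Your upper bound, however, takes a genuinely different route. The paper proves the per-boundary inequality $d=\sum d_i(-K_Y\cdot D_i)+\sum e_j\ge 2\sum d_i+\sum e_j$, using $-K_Y\cdot D_i\ge 2$ for prime divisors with $D_i^2\ge 0$ (valid for $d\ge 2$), which gives $\sum d_i+\sum e_j\le\frac{1}{2}\bigl(d+\sum e_j\bigr)\le\frac{1}{2}(d+e)$ directly; you instead deduce $e\ge d$ from Proposition \ref{MoriCone} and fall back on Lemma \ref{kleiman}. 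Your route is correct for the surfaces at hand (it needs $d\le 7$ for the Mori cone statement, which holds in the cases of Theorem \ref{Treetheorem}), but note what it costs: as you observe, under the literal definition $e=d$, so your reading makes the upper bound coincide with Lemma \ref{kleiman}, whereas the paper's per-boundary estimate is the form that is actually reused in the proof of Theorem \ref{Treetheorem}, where $\sum e_j$ is bounded by intersecting with a slave divisor for admissible boundaries; that refined application follows from the paper's argument but not from yours.

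There is also a slip in your conclusion. By Definition \ref{sigma_prime} the coefficients of the $(-2)$-curves are not counted in $\sigma'(Y)$, so from $B$ you may only conclude $\sigma'(Y)\ge\bigl(1-\frac{1}{n}\bigr)+\frac{1}{n}\sum_{D_i\in E(Y)}d_i$; your displayed inequality $\sigma'(Y)\ge\bigl(1-\frac{1}{n}\bigr)+\frac{1}{n}\sum_i d_i$ with the sum over all components of $N$ is not justified and is false in general, since when the $(-2)$-coefficients of $N$ are large (e.g.\ the decomposition $D_K$ of $Y_{2,4}$) the right-hand side exceeds the bound $\sigma'(Y)\le d$ of Lemma \ref{kleiman}. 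The error is harmless for the statement, because you already proved $\sum_{D_i\in E(Y)}d_i=d$ and the corrected count gives precisely $1+\frac{d-1}{n(Y)}$, but the display should be restricted to the $(-1)$-curve coefficients.
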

\begin{proof}
    To prove the first inequality note that for 
    \begin{equation*}
        -K_Y \equiv D = \sum d_iD_i + \sum e_jE_j + \sum a_kA_k.
    \end{equation*}
    Here $D_i,E_j,A_k$ are prime divisors,
    \begin{equation*}
        D_i^2 \ge 0, \quad E_j^2=-1, \quad A_k^2=-2.
    \end{equation*}
    We have
    \begin{equation*}
        d=(-K_Y)^2= \sum d_i(-K_Y,D_i)+\sum e_j \ge 2\sum d_i + \sum e_j = 2(\sum d_i + \sum e_j) - \sum e_j
    \end{equation*}
    because $(-K_Y,D_i)\ge 2$.
    To show the second inequality take $C = \frac{1}{n(Y)}D + (1-\frac{1}{n(Y)})\mathcal{C}$, where $\mathcal{C} \in |-K_Y|$ is a non-singular elliptic curve, 
    $D$ is from the previous definition, i.e., $\mathrm{sup}(D)$ consists of $(-1)$-curves and $(-2)$-curves from $D(Y)$. 
\end{proof}

Now we build all the weak del Pezzo surfaces with $D(Y)$ a tree and $d \ge 2$.
Begin with $d=7$, here are $2$ surfaces. Since $d\le 7$, $NE(Y)$ is spanned by all $(-1)$-curves and $(-2)$-curves.
The following algorithm allows to build $D(Y)$ by induction.
Suppose that $D(Y)$ is given. let $Z=Bl_p(Y)$. 
To define the structure of $D(Z)$ provided that $D(Z)$ is a tree, we need to understand which prime divisors $L\subset Y$, such that $L^2=-1 + r^2$ and $-K_Y\cdot L = 1 + r$, pass through $p$. Here $r = \mathrm{mult}_pL$. By lemma \ref{intersection_minus_one_curves} and remark \ref{mult_remark}, $r = \hat{L}\cdot E \le 2$, where $\hat{L}$ is a strict transform of $L$ and $\hat{L}^2 = L^2 - r^2 =-1$ and $-K_Z \cdot \hat{L} = -K_Y \cdot L - r = 1$, so $\hat{L}$ is a $(-1)$-curve.If $r = 2$, we have $d=3$ and there is a cycle $I = \lbrace \hat{L},E \rbrace$ on $Z$, so if there exists a cubic on $Y$ which has a singular point $p$, $D(Z)$ contains a cycle. So in the case $D(Y)$ and $D(Z)$ are trees, new $(-1)$-curves on $D(Z)$ appear only from curves $L$ with $L^2=0$ and $-K_Y\cdot L =2$.
If $p$ lies on a $(-1)$-curve $E_1 \in D(Y)$, then $\mathrm{mult}_pE_1 = 1$ and $\hat{E}_1^2 = -2$, so $\hat{E}_1$ is a $(-2)$-curve on $Z$. 
If $p\in E \subset D(Y)$, then find the combinations of curves from $D(Y)$ from lemma \ref{lem34} which give a class $L$ such that $L^2=0$, $-K_Y\cdot L = 2$ and  $(L,E)\ge 1$, if these combinations exist. Then, by lemma \ref{genus_zero}, $p \in C \in |L|$  for some $C$.
If $p \notin D(Y)$, then, by lemma \ref{genus_zero}, for each $L$ defined above there is a unique curve $C \in |L|$ that $p\in C$.



    
    
    
    
    

\begin{theo} \label{Treetheorem}

For each type of surfaces from the previous lemma the values of $\gamma(X)$ are described in the tables 2 and 3.

\begin{table}
    \begin{tabular}{ | l | l | l | l | p{2.5cm}|}
    \hline
    Surface & $D_K$, Decomposition of $-K_Y$ & Slave divisor & $\gamma(X)$ &  $D\equiv -K_Y$ \\ \hline
    $Y_{7,1}$
    & 
        \begin{minipage}{0.35\textwidth}

    \begin{center}
    \begin{tikzpicture}
        \node[circle, draw=black,scale=0.5](1) at (-1,0) {};
        \node[circle, draw=black,scale=0.5](2) at (0,0) {};
        \node[circle, draw=black,scale=0.5](3) at (1,0) {};
        \node[]() at (-1,0.5) {$2$};
        \node[]() at (0,0.5) {$3$};
        \node[]() at (1,0.5) {$2$};
    \path (1) edge (2);
    \path (2) edge (3);
    \end{tikzpicture}
\end{center}
    \end{minipage}

    &
    
        \begin{minipage}{0.35\textwidth}
    \begin{center}
    \begin{tikzpicture}
        \node[circle, draw=black,scale=0.5](1) at (-1,0) {};
        \node[circle, draw=black,scale=0.5](2) at (0,0) {};
        \node[circle, draw=black,scale=0.5](3) at (1,0) {};
        \node[]() at (-1,0.5) {$1$};
        \node[]() at (0,0.5) {$2$};
        \node[]() at (1,0.5) {$1$};
    \path (1) edge (2);
    \path (2) edge (3);
    \end{tikzpicture}
\end{center}
    \end{minipage}
     & 0 &$L_1+L_2+\sum E_i$\\ \hline
    $Y_{7,2}$
    & 
    
        \begin{minipage}{0.35\textwidth}
    \begin{center}
    \begin{tikzpicture}
        \node[circle, draw=black,scale=0.5](1) at (-1,0) {};
        \node[circle, draw=black,scale=0.5](2) at (0,0) {};
        \node[circle, fill=black,scale=0.5](3) at (1,0) {};
        \node[]() at (-1,0.5) {$3$};
        \node[]() at (0,0.5) {$4$};
        \node[]() at (1,0.5) {$2$};
    \path (1) edge (2);
    \path (2) edge (3);
    \end{tikzpicture}
\end{center}
    \end{minipage}
    & 
    
        \begin{minipage}{0.35\textwidth}

    \begin{center}
    \begin{tikzpicture}
        \node[circle, draw=black,scale=0.5](1) at (-1,0) {};
        \node[circle, draw=black,scale=0.5](2) at (0,0) {};
        \node[circle, fill=black,scale=0.5](3) at (1,0) {};
        \node[]() at (-1,0.5) {$1$};
        \node[]() at (0,0.5) {$2$};
        \node[]() at (1,0.5) {$1$};
    \path (1) edge (2);
    \path (2) edge (3);
    \end{tikzpicture}
\end{center}
    \end{minipage}
    & 0&{\footnotesize$L+H+E_1+E_2+A$}\\ \hline
    
    $Y_{6,1}$
    & 
    
        \begin{minipage}{0.35\textwidth}
    \begin{center}
    \begin{tikzpicture}
        \node[circle, draw=black,scale=0.5](1) at (-1,0) {};
        \node[circle, fill=black,scale=0.5](2) at (0,0) {};
        \node[circle, draw=black,scale=0.5](3) at (1,0) {};
        \node[circle, fill=black,scale=0.5](4) at (2,0) {};
        \node[]() at (-1,0.5) {$2$};
        \node[]() at (0,0.5) {$3$};
        \node[]() at (1,0.5) {$4$};
        \node[]() at (2,0.5) {$2$};
    \path (1) edge (2);
    \path (2) edge (3);
    \path (3) edge (4);
    \end{tikzpicture}
\end{center}
    \end{minipage}
    &
    
        \begin{minipage}{0.35\textwidth}
        \begin{center}
    \begin{tikzpicture}
        \node[circle, draw=black,scale=0.5](1) at (-1,0) {};
        \node[circle, fill=black,scale=0.5](2) at (0,0) {};
        \node[circle, draw=black,scale=0.5](3) at (1,0) {};
        \node[circle, fill=black,scale=0.5](4) at (2,0) {};
        \node[]() at (-1,0.5) {$1$};
        \node[]() at (0,0.5) {$2$};
        \node[]() at (1,0.5) {$2$};
        \node[]() at (2,0.5) {$1$};
    \path (1) edge (2);
    \path (2) edge (3);
    \path (3) edge (4);
    \end{tikzpicture}
\end{center}
    \end{minipage}
    & 0 &{\footnotesize$L_1+L_2+\sum (E_i+A_i)$} \\ \hline 
    $Y_{6,2}$ 
    & 
        \begin{minipage}{0.35\textwidth}
        \begin{center}
    \begin{tikzpicture}
        \node[circle, draw=black,scale=0.5](1) at (-1,0) {};
        \node[circle, draw=black,scale=0.5](2) at (0,0) {};
        \node[circle, fill=black,scale=0.5](3) at (1,0) {};
        \node[circle, draw=black,scale=0.5](4) at (2,0) {};
        \node[circle, draw=black,scale=0.5](5) at (3,0) {};
        \node[]() at (-1,0.5) {$1$};
        \node[]() at (0,0.5) {$2$};
        \node[]() at (1,0.5) {$2$};
        \node[]() at (2,0.5) {$2$};
        \node[]() at (3,0.5) {$1$};
        
    \path (1) edge (2);
    \path (2) edge (3);
    \path (3) edge (4);
    \path (4) edge (5);
    \end{tikzpicture}
\end{center}
    \end{minipage}
    & 
    
        \begin{minipage}{0.35\textwidth}
            \begin{center}
    \begin{tikzpicture}
        \node[circle, draw=black,scale=0.5](1) at (-1,0) {};
        \node[circle, draw=black,scale=0.5](2) at (0,0) {};
        \node[circle, fill=black,scale=0.5](3) at (1,0) {};
        \node[circle, draw=black,scale=0.5](4) at (2,0) {};
        \node[circle, draw=black,scale=0.5](5) at (3,0) {};
        \node[]() at (-1,0.5) {$1$};
        \node[]() at (0,0.5) {$2$};
        \node[]() at (1,0.5) {$2$};
        \node[]() at (2,0.5) {$1$};
        \node[]() at (3,0.5) {$0$};
        
    \path (1) edge (2);
    \path (2) edge (3);
    \path (3) edge (4);
    \path (4) edge (5);
    \end{tikzpicture}
\end{center}
    \end{minipage}
    & 0&$L+A+\sum E_i$ \\ \hline
    $Y_{6,3}$ 
    & 
    
        \begin{minipage}{0.35\textwidth}
            \begin{center}
    \begin{tikzpicture}
        \node[circle, draw=black,scale=0.5](1) at (-1,0) {};
        \node[circle, fill=black,scale=0.5](2) at (0,0) {};
        \node[circle, fill=black,scale=0.5](3) at (1,0.5) {};
        \node[circle, draw=black,scale=0.5](4) at (1,-0.5) {};
        \node[]() at (-1,0.5) {$3$};
        \node[]() at (0,0.5) {$4$};
        \node[]() at (1,1) {$2$};
        \node[]() at (1,0) {$3$};
    \path (1) edge (2);
    \path (2) edge (3);
    \path (2) edge (4);
    \end{tikzpicture}
\end{center}
    \end{minipage}
    & 
    
        \begin{minipage}{0.35\textwidth}
                \begin{center}
    \begin{tikzpicture}
        \node[circle, draw=black,scale=0.5](1) at (-1,0) {};
        \node[circle, fill=black,scale=0.5](2) at (0,0) {};
        \node[circle, fill=black,scale=0.5](3) at (1,0.5) {};
        \node[circle, draw=black,scale=0.5](4) at (1,-0.5) {};
        \node[]() at (-1,0.5) {$1$};
        \node[]() at (0,0.5) {$2$};
        \node[]() at (1,1) {$1$};
        \node[]() at (1,0) {$1$};
    \path (1) edge (2);
    \path (2) edge (3);
    \path (2) edge (4);
    \end{tikzpicture}
\end{center}
    \end{minipage}
    & 1&{\footnotesize$L+H+A_1+A_2+E_1$}\\ \hline
    $Y_{6,4}$ 
    &  
    
        \begin{minipage}{0.35\textwidth}
        \begin{center}
    \begin{tikzpicture}
        \node[circle, fill=black,scale=0.5](1) at (-1,0) {};
        \node[circle, draw=black,scale=0.5](2) at (0,0) {};
        \node[circle, fill=black,scale=0.5](3) at (1,0) {};
        \node[circle, fill=black,scale=0.5](4) at (2,0) {};
        \node[]() at (-1,0.5) {$3$};
        \node[]() at (0,0.5) {$6$};
        \node[]() at (1,0.5) {$4$};
        \node[]() at (2,0.5) {$2$};
    \path (1) edge (2);
    \path (2) edge (3);
    \path (3) edge (4);
    \end{tikzpicture}
\end{center}
    \end{minipage}
    & 
        \begin{minipage}{0.35\textwidth}
        \begin{center}
    \begin{tikzpicture}
        \node[circle, fill=black,scale=0.5](1) at (-1,0) {};
        \node[circle, draw=black,scale=0.5](2) at (0,0) {};
        \node[circle, fill=black,scale=0.5](3) at (1,0) {};
        \node[circle, fill=black,scale=0.5](4) at (2,0) {};
        \node[]() at (-1,0.5) {$1$};
        \node[]() at (0,0.5) {$2$};
        \node[]() at (1,0.5) {$2$};
        \node[]() at (2,0.5) {$1$};
    \path (1) edge (2);
    \path (2) edge (3);
    \path (3) edge (4);
    \end{tikzpicture}
\end{center}
    \end{minipage}
    & 0&$L+H+E+\sum A_i$ \\ \hline    
    $Y_{6,5}$ 
    & 
        \begin{minipage}{0.35\textwidth}
                    \begin{center}
    \begin{tikzpicture}
        \node[circle, draw=black,scale=0.5](1) at (-1,0) {};
        \node[circle, fill=black,scale=0.5](2) at (0,0) {};
        \node[circle, draw=black,scale=0.5](3) at (1,0.5) {};
        \node[circle, draw=black,scale=0.5](4) at (1,-0.5) {};
        \node[]() at (-1,0.5) {$2$};
        \node[]() at (0,0.5) {$3$};
        \node[]() at (1,1) {$2$};
        \node[]() at (1,0) {$2$};
    \path (1) edge (2);
    \path (2) edge (3);
    \path (2) edge (4);
    \end{tikzpicture}
\end{center}
    \end{minipage}
    & 
        \begin{minipage}{0.35\textwidth}
                    \begin{center}
    \begin{tikzpicture}
        \node[circle, draw=black,scale=0.5](1) at (-1,0) {};
        \node[circle, fill=black,scale=0.5](2) at (0,0) {};
        \node[circle, draw=black,scale=0.5](3) at (1,0.5) {};
        \node[circle, draw=black,scale=0.5](4) at (1,-0.5) {};
        \node[]() at (-1,0.5) {$1$};
        \node[]() at (0,0.5) {$2$};
        \node[]() at (1,1) {$1$};
        \node[]() at (1,0) {$1$};
    \path (1) edge (2);
    \path (2) edge (3);
    \path (2) edge (4);
    \end{tikzpicture}
\end{center}
    \end{minipage}
    & 1&{\footnotesize$L_1+L_2+E_1+E_2+A$} \\ \hline
    $Y_{5,1}$ 
    & 
        \begin{minipage}{0.35\textwidth}
         \begin{center}
    \begin{tikzpicture}
        \node[circle, draw=black,scale=0.5](0) at (-2,0) {};
        \node[circle, fill=black,scale=0.5](1) at (-1,0) {};
        \node[circle, fill=black,scale=0.5](2) at (0,0) {};
        \node[circle, draw=black,scale=0.5](3) at (1,0.5) {};
        \node[circle, fill=black,scale=0.5](4) at (1,-0.5) {};
        
        \node[]() at (-2,0.5) {$2$};
        \node[]() at (-1,0.5) {$3$};
        \node[]() at (0,0.5) {$4$};
        \node[]() at (1,1) {$3$};
        \node[]() at (1,0) {$2$};
        
    \path (0) edge (1);
    \path (1) edge (2);
    \path (2) edge (3);
    \path (2) edge (4);
    \end{tikzpicture}
\end{center}
    \end{minipage}
    & 
        \begin{minipage}{0.35\textwidth}
             \begin{center}
    \begin{tikzpicture}
        \node[circle, draw=black,scale=0.5](0) at (-2,0) {};
        \node[circle, fill=black,scale=0.5](1) at (-1,0) {};
        \node[circle, fill=black,scale=0.5](2) at (0,0) {};
        \node[circle, draw=black,scale=0.5](3) at (1,0.5) {};
        \node[circle, fill=black,scale=0.5](4) at (1,-0.5) {};
        
        \node[]() at (-2,0.5) {$1$};
        \node[]() at (-1,0.5) {$2$};
        \node[]() at (0,0.5) {$2$};
        \node[]() at (1,1) {$1$};
        \node[]() at (1,0) {$1$};
        
    \path (0) edge (1);
    \path (1) edge (2);
    \path (2) edge (3);
    \path (2) edge (4);
    \end{tikzpicture}
\end{center}
    \end{minipage}
    & 1&$L_1+L_2+\sum_{i=1}^3A_i$\\ \hline
    $Y_{5,2}$ 
    &  
        \begin{minipage}{0.35\textwidth}
            \begin{center}
    \begin{tikzpicture}
        \node[circle, fill=black,scale=0.5](1) at (-1,0) {};
        \node[circle, draw=black,scale=0.5](2) at (0,0) {};
        \node[circle, fill=black,scale=0.5](3) at (1,0) {};
        \node[circle, fill=black,scale=0.5](4) at (2,0) {};
        \node[circle, draw=black,scale=0.5](5) at (3,0) {};
        \node[circle, draw=black,scale=0.5](6) at (4,0) {};
        \node[]() at (-1,0.5) {$1$};
        \node[]() at (0,0.5) {$2$};
        \node[]() at (1,0.5) {$2$};
        \node[]() at (2,0.5) {$2$};
        \node[]() at (3,0.5) {$2$};
        \node[]() at (4,0.5) {$1$};

    \path (1) edge (2);
    \path (2) edge (3);
    \path (3) edge (4);
    \path (4) edge (5);
    \path (5) edge (6);
    \end{tikzpicture}
\end{center}
    \end{minipage}
    & 
        \begin{minipage}{0.35\textwidth}
                \begin{center}
    \begin{tikzpicture}
        \node[circle, fill=black,scale=0.5](1) at (-1,0) {};
        \node[circle, draw=black,scale=0.5](2) at (0,0) {};
        \node[circle, fill=black,scale=0.5](3) at (1,0) {};
        \node[circle, fill=black,scale=0.5](4) at (2,0) {};
        \node[circle, draw=black,scale=0.5](5) at (3,0) {};
        \node[circle, draw=black,scale=0.5](6) at (4,0) {};
        \node[]() at (-1,0.5) {$1$};
        \node[]() at (0,0.5) {$2$};
        \node[]() at (1,0.5) {$2$};
        \node[]() at (2,0.5) {$2$};
        \node[]() at (3,0.5) {$1$};
        \node[]() at (4,0.5) {$0$};

    \path (1) edge (2);
    \path (2) edge (3);
    \path (3) edge (4);
    \path (4) edge (5);
    \path (5) edge (6);
    \end{tikzpicture}
\end{center}
    \end{minipage}
    & 0&$L+\sum_{i=1}^3(E_i+A_i)$ \\ \hline 
    $Y_{5,3}$
    &    
        \begin{minipage}{0.35\textwidth}
                 \begin{center}
    \begin{tikzpicture}
    
        \node[circle, draw=black,scale=0.5](-1) at (-3,0) {};
        \node[circle, draw=black,scale=0.5](0) at (-2,0) {};
        \node[circle, fill=black,scale=0.5](1) at (-1,0) {};
        \node[circle, fill=black,scale=0.5](2) at (0,0) {};
        \node[circle, draw=black,scale=0.5](3) at (1,0.5) {};
        \node[circle, draw=black,scale=0.5](4) at (1,-0.5) {};
        
        \node[]() at (-3,0.5) {$1$};
        \node[]() at (-2,0.5) {$2$};
        \node[]() at (-1,0.5) {$2$};
        \node[]() at (0,0.5) {$2$};
        \node[]() at (1,1) {$1$};
        \node[]() at (1,0) {$1$};
        
    \path (-1) edge (0);
    \path (0) edge (1);
    \path (1) edge (2);
    \path (2) edge (3);
    \path (2) edge (4);
    \end{tikzpicture}
\end{center}
    \end{minipage}
    & 
        \begin{minipage}{0.35\textwidth}
                     \begin{center}
    \begin{tikzpicture}
    
        \node[circle, draw=black,scale=0.5](-1) at (-3,0) {};
        \node[circle, draw=black,scale=0.5](0) at (-2,0) {};
        \node[circle, fill=black,scale=0.5](1) at (-1,0) {};
        \node[circle, fill=black,scale=0.5](2) at (0,0) {};
        \node[circle, draw=black,scale=0.5](3) at (1,0.5) {};
        \node[circle, draw=black,scale=0.5](4) at (1,-0.5) {};
        
        \node[]() at (-3,0.5) {$0$};
        \node[]() at (-2,0.5) {$1$};
        \node[]() at (-1,0.5) {$2$};
        \node[]() at (0,0.5) {$2$};
        \node[]() at (1,1) {$1$};
        \node[]() at (1,0) {$1$};
        
    \path (-1) edge (0);
    \path (0) edge (1);
    \path (1) edge (2);
    \path (2) edge (3);
    \path (2) edge (4);
    \end{tikzpicture}
\end{center}
    \end{minipage}
    & 1&{\footnotesize$L+\sum_{i=1}^3E_i+A_1+A_2$} \\ \hline
    $Y_{5,4}$
    & 
        \begin{minipage}{0.35\textwidth}
             \begin{center}
    \begin{tikzpicture}
        \node[circle, fill=black,scale=0.5](0) at (-2,0) {};
        \node[circle, fill=black,scale=0.5](1) at (-1,0) {};
        \node[circle, fill=black,scale=0.5](2) at (0,0) {};
        \node[circle, draw=black,scale=0.5](3) at (1,0.5) {};
        \node[circle, fill=black,scale=0.5](4) at (1,-0.5) {};
        
        \node[]() at (-2,0.5) {$2$};
        \node[]() at (-1,0.5) {$4$};
        \node[]() at (0,0.5) {$6$};
        \node[]() at (1,1) {$5$};
        \node[]() at (1,0) {$3$};
        
    \path (0) edge (1);
    \path (1) edge (2);
    \path (2) edge (3);
    \path (2) edge (4);
    \end{tikzpicture}
\end{center}
    \end{minipage}
    &   
        \begin{minipage}{0.35\textwidth}
                 \begin{center}
    \begin{tikzpicture}
        \node[circle, fill=black,scale=0.5](0) at (-2,0) {};
        \node[circle, fill=black,scale=0.5](1) at (-1,0) {};
        \node[circle, fill=black,scale=0.5](2) at (0,0) {};
        \node[circle, draw=black,scale=0.5](3) at (1,0.5) {};
        \node[circle, fill=black,scale=0.5](4) at (1,-0.5) {};
        
        \node[]() at (-2,0.5) {$1$};
        \node[]() at (-1,0.5) {$3$};
        \node[]() at (0,0.5) {$2$};
        \node[]() at (1,1) {$1$};
        \node[]() at (1,0) {$1$};
        
    \path (0) edge (1);
    \path (1) edge (2);
    \path (2) edge (3);
    \path (2) edge (4);
    \end{tikzpicture}
\end{center}
    
    \end{minipage}
    & 1&$L+H+\sum_{i=1}^4A_i$\\ \hline  
    $Y_{4,1}$ 
    & 
        \begin{minipage}{0.35\textwidth}
         \begin{center}
    \begin{tikzpicture}
    
        \node[circle, draw=black,scale=0.5](-2) at (-4,0) {};
        \node[circle, draw=black,scale=0.5](-1) at (-3,0) {};
        \node[circle, fill=black,scale=0.5](0) at (-2,0) {};
        \node[circle, fill=black,scale=0.5](1) at (-1,0) {};
        \node[circle, fill=black,scale=0.5](2) at (0,0) {};
        \node[circle, draw=black,scale=0.5](3) at (1,0.5) {};
        \node[circle, fill=black,scale=0.5](4) at (1,-0.5) {};
        
        \node[]() at (-4,0.5) {$0$};
        \node[]() at (-3,0.5) {$1$};
        \node[]() at (-2,0.5) {$2$};
        \node[]() at (-1,0.5) {$3$};
        \node[]() at (0,0.5) {$4$};
        \node[]() at (1,1) {$3$};
        \node[]() at (1,0) {$2$};
        
    \path (-2) edge (-1);
    \path (-1) edge (0);
    \path (0) edge (1);
    \path (1) edge (2);
    \path (2) edge (3);
    \path (2) edge (4);
    \end{tikzpicture}
\end{center}
    \end{minipage}
    & 
        \begin{minipage}{0.35\textwidth}
             \begin{center}
    \begin{tikzpicture}
    
        \node[circle, draw=black,scale=0.5](-2) at (-4,0) {};
        \node[circle, draw=black,scale=0.5](-1) at (-3,0) {};
        \node[circle, fill=black,scale=0.5](0) at (-2,0) {};
        \node[circle, fill=black,scale=0.5](1) at (-1,0) {};
        \node[circle, fill=black,scale=0.5](2) at (0,0) {};
        \node[circle, draw=black,scale=0.5](3) at (1,0.5) {};
        \node[circle, fill=black,scale=0.5](4) at (1,-0.5) {};
        
        \node[]() at (-4,0.5) {$0$};
        \node[]() at (-3,0.5) {$1$};
        \node[]() at (-2,0.5) {$2$};
        \node[]() at (-1,0.5) {$2$};
        \node[]() at (0,0.5) {$2$};
        \node[]() at (1,1) {$1$};
        \node[]() at (1,0) {$1$};
        
    \path (-2) edge (-1);
    \path (-1) edge (0);
    \path (0) edge (1);
    \path (1) edge (2);
    \path (2) edge (3);
    \path (2) edge (4);
    \end{tikzpicture}
\end{center}
    \end{minipage}
    & 1&$L+E_1+\sum_{i=1}^4A_i$ \\ \hline
    $Y_{4,2}$ 
    &  
        \begin{minipage}{0.35\textwidth}
            \begin{center}
    \begin{tikzpicture}
        \node[circle, draw=black,scale=0.5](1) at (-1,0) {};
        \node[circle, fill=black,scale=0.5](2) at (0,0) {};
        \node[circle, fill=black,scale=0.5](3) at (1,0) {};
        \node[circle, fill=black,scale=0.5](3*) at (1,1) {};
        \node[circle, fill=black,scale=0.5](4) at (2,0) {};
        \node[circle, draw=black,scale=0.5](5) at (3,0) {};
        \node[]() at (-1,0.5) {$2$};
        \node[]() at (0,0.5) {$3$};
        \node[]() at (1.2,0.5) {$4$};
        \node[]() at (1,1.5) {$2$};
        \node[]() at (2,0.5) {$3$};
        \node[]() at (3,0.5) {$2$};
        
    \path (1) edge (2);
    \path (2) edge (3);
    \path (3) edge (3*);
    \path (3) edge (4);
    \path (4) edge (5);
    \end{tikzpicture}
\end{center}
    \end{minipage}
    & 
        \begin{minipage}{0.35\textwidth}
      \begin{center}
    \begin{tikzpicture}
        \node[circle, draw=black,scale=0.5](1) at (-1,0) {};
        \node[circle, fill=black,scale=0.5](2) at (0,0) {};
        \node[circle, fill=black,scale=0.5](3) at (1,0) {};
        \node[circle, fill=black,scale=0.5](3*) at (1,1) {};
        \node[circle, fill=black,scale=0.5](4) at (2,0) {};
        \node[circle, draw=black,scale=0.5](5) at (3,0) {};
        \node[]() at (-1,0.5) {$0$};
        \node[]() at (0,0.5) {$1$};
        \node[]() at (1.2,0.5) {$2$};
        \node[]() at (1,1.5) {$1$};
        \node[]() at (2,0.5) {$1$};
        \node[]() at (3,0.5) {$0$};
        
    \path (1) edge (2);
    \path (2) edge (3);
    \path (3) edge (3*);
    \path (3) edge (4);
    \path (4) edge (5);
    \end{tikzpicture}
\end{center}
    \end{minipage}
    & $\frac{3}{2}$ &
    $\frac{1}{4}D_K+\frac{3}{4}(L_1+L_2)$
    \\ \hline
    $Y_{4,3}$
    &
        \begin{minipage}{0.35\textwidth}
             \begin{center}
    \begin{tikzpicture}
    
        \node[circle, fill=black,scale=0.5](-2) at (-4,0) {};
        \node[circle, draw=black,scale=0.5](-1) at (-3,0) {};
        \node[circle, fill=black,scale=0.5](0) at (-2,0) {};
        \node[circle, fill=black,scale=0.5](1) at (-1,0) {};
        \node[circle, fill=black,scale=0.5](2) at (0,0) {};
        \node[circle, draw=black,scale=0.5](3) at (1,0.5) {};
        \node[circle, draw=black,scale=0.5](4) at (1,-0.5) {};
        
        \node[]() at (-4,0.5) {$1$};
        \node[]() at (-3,0.5) {$2$};
        \node[]() at (-2,0.5) {$2$};
        \node[]() at (-1,0.5) {$2$};
        \node[]() at (0,0.5) {$2$};
        \node[]() at (1,1) {$1$};
        \node[]() at (1,0) {$1$};
        
    \path (-2) edge (-1);
    \path (-1) edge (0);
    \path (0) edge (1);
    \path (1) edge (2);
    \path (2) edge (3);
    \path (2) edge (4);
    \end{tikzpicture}
\end{center}
    \end{minipage}
    & 
        \begin{minipage}{0.35\textwidth}
     \begin{center}
    \begin{tikzpicture}
    
        \node[circle, fill=black,scale=0.5](-2) at (-4,0) {};
        \node[circle, draw=black,scale=0.5](-1) at (-3,0) {};
        \node[circle, fill=black,scale=0.5](0) at (-2,0) {};
        \node[circle, fill=black,scale=0.5](1) at (-1,0) {};
        \node[circle, fill=black,scale=0.5](2) at (0,0) {};
        \node[circle, draw=black,scale=0.5](3) at (1,0.5) {};
        \node[circle, draw=black,scale=0.5](4) at (1,-0.5) {};
        
        \node[]() at (-4,0.5) {$0$};
        \node[]() at (-3,0.5) {$0$};
        \node[]() at (-2,0.5) {$1$};
        \node[]() at (-1,0.5) {$2$};
        \node[]() at (0,0.5) {$2$};
        \node[]() at (1,1) {$1$};
        \node[]() at (1,0) {$1$};
        
    \path (-2) edge (-1);
    \path (-1) edge (0);
    \path (0) edge (1);
    \path (1) edge (2);
    \path (2) edge (3);
    \path (2) edge (4);
    \end{tikzpicture}
\end{center}
    \end{minipage}
    & 1&$L+E_1+\sum_{i=1}^4A_i$\\

    \\ \hline

    \end{tabular}
\end{table}

    \begin{table}
    \begin{tabular}{ | l | l | l | l | p{2cm}|}\\
    \hline
    Surface & $D_K$,Decomposition of $-K_Y$ & Slave divisor & $\gamma(X)$ &  $D\equiv -K_Y$ \\
    \hline
    $Y_{4,4}$ 
    & 
        \begin{minipage}{0.35\textwidth}
                \begin{center}
    \begin{tikzpicture}
        \node[circle, fill=black,scale=0.5](0) at (-2,0) {};
        \node[circle, draw=black,scale=0.5](1) at (-1,0) {};
        \node[circle, fill=black,scale=0.5](2) at (0,0) {};
        \node[circle, fill=black,scale=0.5](3) at (1,0) {};
        \node[circle, fill=black,scale=0.5](4) at (2,0) {};
        \node[circle, draw=black,scale=0.5](5) at (3,0) {};
        \node[circle, fill=black,scale=0.5](6) at (4,0) {};
        
        \node[]() at (-2,0.5) {$1$};
        \node[]() at (-1,0.5) {$2$};
        \node[]() at (0,0.5) {$2$};
        \node[]() at (1,0.5) {$2$};
        \node[]() at (2,0.5) {$2$};
        \node[]() at (3,0.5) {$2$};
        \node[]() at (4,0.5) {$1$};

    \path (0) edge (1);
    \path (1) edge (2);
    \path (2) edge (3);
    \path (3) edge (4);
    \path (4) edge (5);
    \path (5) edge (6);
    \end{tikzpicture}
\end{center}
    \end{minipage}
    & 
        \begin{minipage}{0.35\textwidth}
                    \begin{center}
    \begin{tikzpicture}
        \node[circle, fill=black,scale=0.5](0) at (-2,0) {};
        \node[circle, draw=black,scale=0.5](1) at (-1,0) {};
        \node[circle, fill=black,scale=0.5](2) at (0,0) {};
        \node[circle, fill=black,scale=0.5](3) at (1,0) {};
        \node[circle, fill=black,scale=0.5](4) at (2,0) {};
        \node[circle, draw=black,scale=0.5](5) at (3,0) {};
        \node[circle, fill=black,scale=0.5](6) at (4,0) {};
        
        \node[]() at (-2,0.5) {$0$};
        \node[]() at (-1,0.5) {$0$};
        \node[]() at (0,0.5) {$1$};
        \node[]() at (1,0.5) {$2$};
        \node[]() at (2,0.5) {$2$};
        \node[]() at (3,0.5) {$2$};
        \node[]() at (4,0.5) {$1$};

    \path (0) edge (1);
    \path (1) edge (2);
    \path (2) edge (3);
    \path (3) edge (4);
    \path (4) edge (5);
    \path (5) edge (6);
    \end{tikzpicture}
\end{center}
    \end{minipage}
    & 0&{\footnotesize $L+E_1+E_2+\sum_{i=1}^5A_i$} \\ \hline
    $Y_{4,5}$ 
    & 
        \begin{minipage}{0.3\textwidth}
         \begin{center}
    \begin{tikzpicture}
    
        \node[circle, draw=black,scale=0.5](-2) at (-3,0.5) {};
        \node[circle, draw=black,scale=0.5](-1) at (-3,-0.5) {};
        \node[circle, fill=black,scale=0.5](0) at (-2,0) {};
        \node[circle, fill=black,scale=0.5](1) at (-1,0) {};
        \node[circle, fill=black,scale=0.5](2) at (0,0) {};
        \node[circle, draw=black,scale=0.5](3) at (1,0.5) {};
        \node[circle, draw=black,scale=0.5](4) at (1,-0.5) {};
        
        \node[]() at (-3,1) {$1$};
        \node[]() at (-3,0) {$1$};
        \node[]() at (-2,0.5) {$2$};
        \node[]() at (-1,0.5) {$2$};
        \node[]() at (0,0.5) {$2$};
        \node[]() at (1,1) {$1$};
        \node[]() at (1,0) {$1$};
        
    \path (-2) edge (0);
    \path (-1) edge (0);
    \path (0) edge (1);
    \path (1) edge (2);
    \path (2) edge (3);
    \path (2) edge (4);
    \end{tikzpicture}
\end{center}
    \end{minipage}
    & 
        \begin{minipage}{0.3\textwidth}
             \begin{center}
    \begin{tikzpicture}
    
        \node[circle, draw=black,scale=0.5](-2) at (-3,0.5) {};
        \node[circle, draw=black,scale=0.5](-1) at (-3,-0.5) {};
        \node[circle, fill=black,scale=0.5](0) at (-2,0) {};
        \node[circle, fill=black,scale=0.5](1) at (-1,0) {};
        \node[circle, fill=black,scale=0.5](2) at (0,0) {};
        \node[circle, draw=black,scale=0.5](3) at (1,0.5) {};
        \node[circle, draw=black,scale=0.5](4) at (1,-0.5) {};
        
        \node[]() at (-3,1) {$0$};
        \node[]() at (-3,0) {$0$};
        \node[]() at (-2,0.5) {$1$};
        \node[]() at (-1,0.5) {$2$};
        \node[]() at (0,0.5) {$2$};
        \node[]() at (1,1) {$1$};
        \node[]() at (1,0) {$1$};
        
    \path (-2) edge (0);
    \path (-1) edge (0);
    \path (0) edge (1);
    \path (1) edge (2);
    \path (2) edge (3);
    \path (2) edge (4);
    \end{tikzpicture}
\end{center}
    \end{minipage}
    & 2&$L+E_2+E_4+\sum_{i=1}^3A_i$ \\ \hline
    $Y_{4,6}$ 
    & 
        \begin{minipage}{0.3\textwidth}
    \begin{center}
    \begin{tikzpicture}
        \node[circle, draw=black,scale=0.5](1) at (-1,0) {};
        \node[circle, fill=black,scale=0.5](2) at (0,0) {};
        \node[circle, fill=black,scale=0.5](3) at (1,0) {};
        \node[circle, fill=black,scale=0.5](3*) at (1,1) {};
        \node[circle, fill=black,scale=0.5](4) at (2,0) {};
        \node[circle, fill=black,scale=0.5](5) at (3,0) {};
        \node[]() at (-1,0.5) {$4$};
        \node[]() at (0,0.5) {$5$};
        \node[]() at (1.2,0.5) {$6$};
        \node[]() at (1,1.5) {$3$};
        \node[]() at (2,0.5) {$4$};
        \node[]() at (3,0.5) {$2$};
        
    \path (1) edge (2);
    \path (2) edge (3);
    \path (3) edge (3*);
    \path (3) edge (4);
    \path (4) edge (5);
    \end{tikzpicture}
\end{center}
    \end{minipage}
    & 
        \begin{minipage}{0.3\textwidth}
        \begin{center}
    \begin{tikzpicture}
        \node[circle, draw=black,scale=0.5](1) at (-1,0) {};
        \node[circle, fill=black,scale=0.5](2) at (0,0) {};
        \node[circle, fill=black,scale=0.5](3) at (1,0) {};
        \node[circle, fill=black,scale=0.5](3*) at (1,1) {};
        \node[circle, fill=black,scale=0.5](4) at (2,0) {};
        \node[circle, fill=black,scale=0.5](5) at (3,0) {};
        \node[]() at (-1,0.5) {$1$};
        \node[]() at (0,0.5) {$2$};
        \node[]() at (1.2,0.5) {$3$};
        \node[]() at (1,1.5) {$1.5$};
        \node[]() at (2,0.5) {$2$};
        \node[]() at (3,0.5) {$1$};
        
    \path (1) edge (2);
    \path (2) edge (3);
    \path (3) edge (3*);
    \path (3) edge (4);
    \path (4) edge (5);
    \end{tikzpicture}
\end{center}
    \end{minipage}
    & $\frac{3}{2}$ &$\frac{1}{6}D_K+\frac{5}{6}\mathcal{C}$\\ \hline
    
    $Y_{3,1}$ 
    & 
        \begin{minipage}{0.3\textwidth}
            \begin{center}
    \begin{tikzpicture}
        \node[circle, draw=black,scale=0.5](-1) at (-3,0) {};
        \node[circle, draw=black,scale=0.5](0) at (-2,0) {};
        \node[circle, fill=black,scale=0.5](1) at (-1,0) {};
        \node[circle, fill=black,scale=0.5](2) at (0,0) {};
        \node[circle, fill=black,scale=0.5](3) at (1,0) {};
        \node[circle, fill=black,scale=0.5](3*) at (1,1) {};
        \node[circle, fill=black,scale=0.5](4) at (2,0) {};
        \node[circle, draw=black,scale=0.5](5) at (3,0) {};
        \node[]() at (-3,0.5) {$1$};
        \node[]() at (-2,0.5) {$2$};
        \node[]() at (-1,0.5) {$2$};
        \node[]() at (0,0.5) {$2$};
        \node[]() at (1.2,0.5) {$2$};
        \node[]() at (1,1.5) {$1$};
        \node[]() at (2,0.5) {$1$};
        \node[]() at (3,0.5) {$0$};
        
    \path (-1) edge (0);
    \path (0) edge (1);
    \path (1) edge (2);
    \path (2) edge (3);
    \path (3) edge (3*);
    \path (3) edge (4);
    \path (4) edge (5);
    \end{tikzpicture}
\end{center}
    \end{minipage}
    & 
        \begin{minipage}{0.3\textwidth}
                \begin{center}
    \begin{tikzpicture}
        \node[circle, draw=black,scale=0.5](-1) at (-3,0) {};
        \node[circle, draw=black,scale=0.5](0) at (-2,0) {};
        \node[circle, fill=black,scale=0.5](1) at (-1,0) {};
        \node[circle, fill=black,scale=0.5](2) at (0,0) {};
        \node[circle, fill=black,scale=0.5](3) at (1,0) {};
        \node[circle, fill=black,scale=0.5](3*) at (1,1) {};
        \node[circle, fill=black,scale=0.5](4) at (2,0) {};
        \node[circle, draw=black,scale=0.5](5) at (3,0) {};
        \node[]() at (-3,0.5) {$0$};
        \node[]() at (-2,0.5) {$0$};
        \node[]() at (-1,0.5) {$1$};
        \node[]() at (0,0.5) {$2$};
        \node[]() at (1.2,0.5) {$2$};
        \node[]() at (1,1.5) {$1$};
        \node[]() at (2,0.5) {$1$};
        \node[]() at (3,0.5) {$0$};
        
    \path (-1) edge (0);
    \path (0) edge (1);
    \path (1) edge (2);
    \path (2) edge (3);
    \path (3) edge (3*);
    \path (3) edge (4);
    \path (4) edge (5);
    \end{tikzpicture}
\end{center}
    \end{minipage}
    & $\frac{3}{2}$&$\frac{1}{2}D_K+\frac{1}{2}(L+E_1)$ \\ \hline
    $Y_{3,2}$ 
    & 
        \begin{minipage}{0.3\textwidth}
             \begin{center}
    \begin{tikzpicture}
    
        \node[circle, draw=black,scale=0.5](-2) at (-3,0.5) {};
        \node[circle, draw=black,scale=0.5](-1) at (-3,-0.5) {};
        \node[circle, fill=black,scale=0.5](0) at (-2,0) {};
        \node[circle, fill=black,scale=0.5](0*) at (-1,0) {};
        \node[circle, fill=black,scale=0.5](1) at (0,0) {};
        \node[circle, fill=black,scale=0.5](2) at (1,0) {};
        \node[circle, fill=black,scale=0.5](3) at (2,0.5) {};
        \node[circle, draw=black,scale=0.5](4) at (2,-0.5) {};
        
        \node[]() at (-3,1) {$1$};
        \node[]() at (-3,0) {$1$};
        \node[]() at (-2,0.5) {$2$};
        \node[]() at (-1,0.5) {$2$};
        \node[]() at (0,0.5) {$2$};
        \node[]() at (2,1) {$1$};
        \node[]() at (2,1) {$1$};
        \node[]() at (2,0) {$1$};
        
    \path (-2) edge (0);
    \path (-1) edge (0);
    \path (0) edge (0*);
    \path (0*) edge (1);
    \path (1) edge (2);
    \path (2) edge (3);
    \path (2) edge (4);
    \end{tikzpicture}
\end{center}
    \end{minipage}
    & 
        \begin{minipage}{0.3\textwidth}
                 \begin{center}
    \begin{tikzpicture}
    
        \node[circle, draw=black,scale=0.5](-2) at (-3,0.5) {};
        \node[circle, draw=black,scale=0.5](-1) at (-3,-0.5) {};
        \node[circle, fill=black,scale=0.5](0) at (-2,0) {};
        \node[circle, fill=black,scale=0.5](0*) at (-1,0) {};
        \node[circle, fill=black,scale=0.5](1) at (0,0) {};
        \node[circle, fill=black,scale=0.5](2) at (1,0) {};
        \node[circle, fill=black,scale=0.5](3) at (2,0.5) {};
        \node[circle, draw=black,scale=0.5](4) at (2,-0.5) {};
        
        \node[]() at (-3,1) {$0$};
        \node[]() at (-3,0) {$0$};
        \node[]() at (-2,0.5) {$1$};
        \node[]() at (-1,0.5) {$2$};
        \node[]() at (0,0.5) {$2$};
        \node[]() at (1,0.5) {$2$};
        \node[]() at (2,1) {$1$};
        \node[]() at (2,0) {$1$};
        
    \path (-2) edge (0);
    \path (-1) edge (0);
    \path (0) edge (0*);
    \path (0*) edge (1);
    \path (1) edge (2);
    \path (2) edge (3);
    \path (2) edge (4);
    \end{tikzpicture}
\end{center}
    \end{minipage}
    & 2& $L+E_1+\sum_{i=1}^5A_i$ \\ \hline
    $Y_{3,3}$ 
    & 
        \begin{minipage}{0.3\textwidth}
                 \begin{center}
    \begin{tikzpicture}
    
        \node[circle, fill=black,scale=0.5](-3) at (-5,0) {};
        \node[circle, draw=black,scale=0.5](-2) at (-4,0) {};
        \node[circle, fill=black,scale=0.5](-1) at (-3,0) {};
        \node[circle, fill=black,scale=0.5](0) at (-2,0) {};
        \node[circle, fill=black,scale=0.5](1) at (-1,0) {};
        \node[circle, fill=black,scale=0.5](2) at (0,0) {};
        \node[circle, draw=black,scale=0.5](3) at (1,0.5) {};
        \node[circle, fill=black,scale=0.5](4) at (1,-0.5) {};
        
        \node[]() at (-5,0.5) {$1$};
        \node[]() at (-4,0.5) {$2$};
        \node[]() at (-3,0.5) {$2$};
        \node[]() at (-2,0.5) {$2$};
        \node[]() at (-1,0.5) {$2$};
        \node[]() at (0,0.5) {$2$};
        \node[]() at (1,1) {$1$};
        \node[]() at (1,0) {$1$};
        
    \path (-3) edge (-2);
    \path (-2) edge (-1);
    \path (-1) edge (0);
    \path (0) edge (1);
    \path (1) edge (2);
    \path (2) edge (3);
    \path (2) edge (4);
    \end{tikzpicture}
\end{center}
    \end{minipage}
    & 
        \begin{minipage}{0.3\textwidth}
                     \begin{center}
    \begin{tikzpicture}
    
        \node[circle, fill=black,scale=0.5](-3) at (-5,0) {};
        \node[circle, draw=black,scale=0.5](-2) at (-4,0) {};
        \node[circle, fill=black,scale=0.5](-1) at (-3,0) {};
        \node[circle, fill=black,scale=0.5](0) at (-2,0) {};
        \node[circle, fill=black,scale=0.5](1) at (-1,0) {};
        \node[circle, fill=black,scale=0.5](2) at (0,0) {};
        \node[circle, draw=black,scale=0.5](3) at (1,0.5) {};
        \node[circle, fill=black,scale=0.5](4) at (1,-0.5) {};
        
        \node[]() at (-5,0.5) {$0$};
        \node[]() at (-4,0.5) {$0$};
        \node[]() at (-3,0.5) {$1$};
        \node[]() at (-2,0.5) {$2$};
        \node[]() at (-1,0.5) {$2$};
        \node[]() at (0,0.5) {$2$};
        \node[]() at (1,1) {$1$};
        \node[]() at (1,0) {$1$};
        
    \path (-3) edge (-2);
    \path (-2) edge (-1);
    \path (-1) edge (0);
    \path (0) edge (1);
    \path (1) edge (2);
    \path (2) edge (3);
    \path (2) edge (4);
    \end{tikzpicture}
\end{center}
    \end{minipage}
    & 1&$L+E_1+\sum_{i=1}^6A_i$\\ \hline
    $Y_{3,4}$ 
    & 
        \begin{minipage}{0.3\textwidth}
        \begin{center}
    \begin{tikzpicture}
        \node[circle, draw=black,scale=0.5](0) at (-2,0) {};
        \node[circle, fill=black,scale=0.5](1) at (-1,0) {};
        \node[circle, fill=black,scale=0.5](2) at (0,0) {};
        \node[circle, fill=black,scale=0.5](3) at (1,0) {};
        \node[circle, fill=black,scale=0.5](3*) at (1,1) {};
        \node[circle, fill=black,scale=0.5](4) at (2,0) {};
        \node[circle, fill=black,scale=0.5](5) at (3,0) {};
        \node[]() at (-2,0.5) {$3$};
        \node[]() at (-1,0.5) {$4$};
        \node[]() at (0,0.5) {$5$};
        \node[]() at (1.2,0.5) {$6$};
        \node[]() at (1,1.5) {$3$};
        \node[]() at (2,0.5) {$4$};
        \node[]() at (3,0.5) {$2$};
        
    \path (0) edge (1);
    \path (1) edge (2);
    \path (2) edge (3);
    \path (3) edge (3*);
    \path (3) edge (4);
    \path (4) edge (5);
    \end{tikzpicture}
\end{center}
    \end{minipage}
    & 
        \begin{minipage}{0.3\textwidth}
            \begin{center}
    \begin{tikzpicture}
        \node[circle, draw=black,scale=0.5](0) at (-2,0) {};
        \node[circle, fill=black,scale=0.5](1) at (-1,0) {};
        \node[circle, fill=black,scale=0.5](2) at (0,0) {};
        \node[circle, fill=black,scale=0.5](3) at (1,0) {};
        \node[circle, fill=black,scale=0.5](3*) at (1,1) {};
        \node[circle, fill=black,scale=0.5](4) at (2,0) {};
        \node[circle, fill=black,scale=0.5](5) at (3,0) {};
        \node[]() at (-2,0.5) {$1$};
        \node[]() at (-1,0.5) {$4$};
        \node[]() at (0,0.5) {$5$};
        \node[]() at (1.2,0.5) {$6$};
        \node[]() at (1,1.5) {$3$};
        \node[]() at (2,0.5) {$4$};
        \node[]() at (3,0.5) {$2$};
        
    \path (0) edge (1);
    \path (1) edge (2);
    \path (2) edge (3);
    \path (3) edge (3*);
    \path (3) edge (4);
    \path (4) edge (5);
    \end{tikzpicture}
\end{center}
    \end{minipage}
    & $\frac{5}{3}$ & $\frac{1}{6}D_K+\frac{5}{6}\mathcal{C}$\\ \hline
    
    $Y_{2,1}$ 
    & 
        \begin{minipage}{0.3\textwidth}
                    \begin{center}
    \begin{tikzpicture}
        \node[circle, fill=black,scale=0.5](-2) at (-4*3/4,0) {};
        \node[circle, draw=black,scale=0.5](-1) at (-3*3/4,0) {};
        \node[circle, fill=black,scale=0.5](0) at (-2*3/4,0) {};
        \node[circle, fill=black,scale=0.5](1) at (-1*3/4,0) {};
        \node[circle, fill=black,scale=0.5](2) at (0*3/4,0) {};
        \node[circle, fill=black,scale=0.5](3) at (1*3/4,0) {};
        \node[circle, fill=black,scale=0.5](3*) at (1*3/4,1) {};
        \node[circle, fill=black,scale=0.5](4) at (2*3/4,0) {};
        \node[circle, draw=black,scale=0.5](5) at (3*3/4,0) {};
        \node[]() at (-4*3/4,0.5) {$1$};
        \node[]() at (-3*3/4,0.5) {$2$};
        \node[]() at (-2*3/4,0.5) {$2$};
        \node[]() at (-1*3/4,0.5) {$2$};
        \node[]() at (0*3/4,0.5) {$2$};
        \node[]() at (1.2*3/4,0.5) {$2$};
        \node[]() at (1*3/4,1.5) {$1$};
        \node[]() at (2*3/4,0.5) {$1$};
        \node[]() at (3*3/4,0.5) {$0$};
        
    \path (-2) edge (-1);
    \path (-1) edge (0);
    \path (0) edge (1);
    \path (1) edge (2);
    \path (2) edge (3);
    \path (3) edge (3*);
    \path (3) edge (4);
    \path (4) edge (5);
    \end{tikzpicture}
\end{center}
    \end{minipage}
    & 
        \begin{minipage}{0.3\textwidth}
                        \begin{center}
    \begin{tikzpicture}
        \node[circle, fill=black,scale=0.5](-2) at (-4*3/4,0) {};
        \node[circle, draw=black,scale=0.5](-1) at (-3*3/4,0) {};
        \node[circle, fill=black,scale=0.5](0) at (-2*3/4,0) {};
        \node[circle, fill=black,scale=0.5](1) at (-1*3/4,0) {};
        \node[circle, fill=black,scale=0.5](2) at (0*3/4,0) {};
        \node[circle, fill=black,scale=0.5](3) at (1*3/4,0) {};
        \node[circle, fill=black,scale=0.5](3*) at (1*3/4,1) {};
        \node[circle, fill=black,scale=0.5](4) at (2*3/4,0) {};
        \node[circle, draw=black,scale=0.5](5) at (3*3/4,0) {};
        \node[]() at (-4*3/4,0.5) {$0$};
        \node[]() at (-3*3/4,0.5) {$0$};
        \node[]() at (-2*3/4,0.5) {$1$};
        \node[]() at (-1*3/4,0.5) {$2$};
        \node[]() at (0*3/4,0.5) {$2$};
        \node[]() at (1.2*3/4,0.5) {$2$};
        \node[]() at (1*3/4,1.5) {$1$};
        \node[]() at (2*3/4,0.5) {$1$};
        \node[]() at (3*3/4,0.5) {$0$};
        
    \path (-2) edge (-1);
    \path (-1) edge (0);
    \path (0) edge (1);
    \path (1) edge (2);
    \path (2) edge (3);
    \path (3) edge (3*);
    \path (3) edge (4);
    \path (4) edge (5);
    \end{tikzpicture}
\end{center}
    \end{minipage}
    & $\frac{3}{2}$ &$\frac{1}{2}D_K+\frac{1}{2}\mathcal{C}$\\ \hline
    $Y_{2,2}$ 
    & 
        \begin{minipage}{0.3\textwidth}
            \begin{center}
    \begin{tikzpicture}
        \node[circle, draw=black,scale=0.5](-2) at (-3,0.5) {};
        \node[circle, draw=black,scale=0.5](-1) at (-3,-0.5) {};
        \node[circle, fill=black,scale=0.5](0) at (-2,0) {};
        \node[circle, fill=black,scale=0.5](1) at (-1,0) {};
        \node[circle, fill=black,scale=0.5](2) at (0,0) {};
        \node[circle, fill=black,scale=0.5](3) at (1,0) {};
        \node[circle, fill=black,scale=0.5](3*) at (1,1) {};
        \node[circle, fill=black,scale=0.5](4) at (2,0) {};
        \node[circle, draw=black,scale=0.5](5) at (3,0) {};
        \node[]() at (-3,1) {$1$};
        \node[]() at (-3,0) {$1$};
        \node[]() at (-2,0.5) {$2$};
        \node[]() at (-1,0.5) {$2$};
        \node[]() at (0,0.5) {$2$};
        \node[]() at (1.2,0.5) {$2$};
        \node[]() at (1,1.5) {$1$};
        \node[]() at (2,0.5) {$1$};
        \node[]() at (3,0.5) {$0$};
        
    \path (-2) edge (0);
    \path (-1) edge (0);
    \path (0) edge (1);
    \path (1) edge (2);
    \path (2) edge (3);
    \path (3) edge (3*);
    \path (3) edge (4);
    \path (4) edge (5);
    \end{tikzpicture}
\end{center}
    \end{minipage}
    & 
        \begin{minipage}{0.3\textwidth}
                \begin{center}
    \begin{tikzpicture}
        \node[circle, draw=black,scale=0.5](-2) at (-3,0.5) {};
        \node[circle, draw=black,scale=0.5](-1) at (-3,-0.5) {};
        \node[circle, fill=black,scale=0.5](0) at (-2,0) {};
        \node[circle, fill=black,scale=0.5](1) at (-1,0) {};
        \node[circle, fill=black,scale=0.5](2) at (0,0) {};
        \node[circle, fill=black,scale=0.5](3) at (1,0) {};
        \node[circle, fill=black,scale=0.5](3*) at (1,1) {};
        \node[circle, fill=black,scale=0.5](4) at (2,0) {};
        \node[circle, draw=black,scale=0.5](5) at (3,0) {};
        \node[]() at (-3,1) {$0$};
        \node[]() at (-3,0) {$0$};
        \node[]() at (-2,0.5) {$1$};
        \node[]() at (-1,0.5) {$2$};
        \node[]() at (0,0.5) {$2$};
        \node[]() at (1.2,0.5) {$2$};
        \node[]() at (1,1.5) {$1$};
        \node[]() at (2,0.5) {$1$};
        \node[]() at (3,0.5) {$0$};
        
    \path (-2) edge (0);
    \path (-1) edge (0);
    \path (0) edge (1);
    \path (1) edge (2);
    \path (2) edge (3);
    \path (3) edge (3*);
    \path (3) edge (4);
    \path (4) edge (5);
    \end{tikzpicture}
\end{center}
    \end{minipage}
    & $\frac{5}{2}$&$\frac{1}{2}D_K+\frac{1}{2}\mathcal{C}$\\ \hline
    $Y_{2,3}$ 
    & 
        \begin{minipage}{0.3\textwidth}
                 \begin{center}
    \begin{tikzpicture}
    
        \node[circle, draw=black,scale=0.5](-2) at (-3,0.5) {};
        \node[circle, fill=black,scale=0.5](-1) at (-3,-0.5) {};
        \node[circle, fill=black,scale=0.5](0) at (-2,0) {};
        
        \node[circle, fill=black,scale=0.5](0*) at (-1,0) {};
        \node[circle, fill=black,scale=0.5](0**) at (0,0) {};
        \node[circle, fill=black,scale=0.5](1) at (1,0) {};
        \node[circle, fill=black,scale=0.5](2) at (2,0) {};
        \node[circle, draw=black,scale=0.5](3) at (3,0.5) {};
        \node[circle, fill=black,scale=0.5](4) at (3,-0.5) {};
        
        \node[]() at (-3,1) {$1$};
        \node[]() at (-3,0) {$1$};
        \node[]() at (-2,0.5) {$2$};
        \node[]() at (-1,0.5) {$2$};
        \node[]() at (0,0.5) {$2$};
        \node[]() at (1,0.5) {$2$};
        \node[]() at (2,0.5) {$2$};
        \node[]() at (3,1) {$1$};
        \node[]() at (3,0) {$1$};
        
    \path (-2) edge (0);
    \path (-1) edge (0);
    \path (0) edge (0*);
    \path (0*) edge (1);
    \path (1) edge (2);
    \path (2) edge (3);
    \path (2) edge (4);
    \end{tikzpicture}
\end{center}
    \end{minipage}
    & 
        \begin{minipage}{0.3\textwidth}
                     \begin{center}
    \begin{tikzpicture}
    
        \node[circle, draw=black,scale=0.5](-2) at (-3,0.5) {};
        \node[circle, fill=black,scale=0.5](-1) at (-3,-0.5) {};
        \node[circle, fill=black,scale=0.5](0) at (-2,0) {};
        
        \node[circle, fill=black,scale=0.5](0*) at (-1,0) {};
        \node[circle, fill=black,scale=0.5](0**) at (0,0) {};
        \node[circle, fill=black,scale=0.5](1) at (1,0) {};
        \node[circle, fill=black,scale=0.5](2) at (2,0) {};
        \node[circle, draw=black,scale=0.5](3) at (2.5,0.5) {};
        \node[circle, fill=black,scale=0.5](4) at (2.5,-0.5) {};
        
        \node[]() at (-3,1) {$0$};
        \node[]() at (-3,0) {$0.5$};
        \node[]() at (-2,0.5) {$1$};
        \node[]() at (-1,0.5) {$1$};
        \node[]() at (0,0.5) {$1$};
        \node[]() at (1,0.5) {$1$};
        \node[]() at (2,0.5) {$1$};
        \node[]() at (2.5,1) {$0$};
        \node[]() at (2.5,0) {$0.5$};
        
    \path (-2) edge (0);
    \path (-1) edge (0);
    \path (0) edge (0*);
    \path (0*) edge (1);
    \path (1) edge (2);
    \path (2) edge (3);
    \path (2) edge (4);
    \end{tikzpicture}
\end{center}
    \end{minipage}
    & $\frac{3}{2}$ & $\frac{1}{2}D_K+\frac{1}{2}\mathcal{C}$\\ \hline
    $Y_{2,4}$ 
    & 
        \begin{minipage}{0.3\textwidth}
                \begin{center}
    \begin{tikzpicture}
        \node[circle, draw=black,scale=0.5](-1) at (-3,0) {};
        \node[circle, fill=black,scale=0.5](0) at (-2,0) {};
        \node[circle, fill=black,scale=0.5](1) at (-1,0) {};
        \node[circle, fill=black,scale=0.5](2) at (0,0) {};
        \node[circle, fill=black,scale=0.5](3) at (1,0) {};
        \node[circle, fill=black,scale=0.5](3*) at (1,1) {};
        \node[circle, fill=black,scale=0.5](4) at (2,0) {};
        \node[circle, fill=black,scale=0.5](5) at (3,0) {};
        \node[]() at (-3,0.5) {$2$};
        \node[]() at (-2,0.5) {$3$};
        \node[]() at (-1,0.5) {$4$};
        \node[]() at (0,0.5) {$5$};
        \node[]() at (1.2,0.5) {$6$};
        \node[]() at (1,1.5) {$3$};
        \node[]() at (2,0.5) {$4$};
        \node[]() at (3,0.5) {$2$};
        
    \path (-1) edge (0);
    \path (0) edge (1);
    \path (1) edge (2);
    \path (2) edge (3);
    \path (3) edge (3*);
    \path (3) edge (4);
    \path (4) edge (5);
    \end{tikzpicture}
\end{center}
    \end{minipage}
    & 
        \begin{minipage}{0.3\textwidth}
                    \begin{center}
    \begin{tikzpicture}
        \node[circle, draw=black,scale=0.5](-1) at (-3,0) {};
        \node[circle, fill=black,scale=0.5](0) at (-2,0) {};
        \node[circle, fill=black,scale=0.5](1) at (-1,0) {};
        \node[circle, fill=black,scale=0.5](2) at (0,0) {};
        \node[circle, fill=black,scale=0.5](3) at (1,0) {};
        \node[circle, fill=black,scale=0.5](3*) at (1,1) {};
        \node[circle, fill=black,scale=0.5](4) at (2,0) {};
        \node[circle, fill=black,scale=0.5](5) at (3,0) {};
        \node[]() at (-3,0.5) {$1$};
        \node[]() at (-2,0.5) {$3$};
        \node[]() at (-1,0.5) {$4$};
        \node[]() at (0,0.5) {$5$};
        \node[]() at (1.2,0.5) {$6$};
        \node[]() at (1,1.5) {$3$};
        \node[]() at (2,0.5) {$4$};
        \node[]() at (3,0.5) {$2$};
        
    \path (-1) edge (0);
    \path (0) edge (1);
    \path (1) edge (2);
    \path (2) edge (3);
    \path (3) edge (3*);
    \path (3) edge (4);
    \path (4) edge (5);
    \end{tikzpicture}
\end{center}
    
    \end{minipage}
    &$\frac{11}{6}$&$\frac{1}{6}D_K+\frac{5}{6}\mathcal{C}$ \\ \hline
    \hline
    \end{tabular}

\end{table}

\end{theo}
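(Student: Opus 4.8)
The plan is to combine the explicit classification sketched just before the theorem with a uniform two-sided estimate of $\sigma'(Y)$ (equivalently of $\gamma(X)=\dim X+\rho(X)-\sigma'(Y)$) carried out surface by surface. First I would make the classification precise: the weak del Pezzo surfaces $Y$ with $d=K_Y^2\ge 2$ and $D(Y)$ a tree are exactly $Y_{7,1},Y_{7,2},Y_{6,1},\dots,Y_{2,4}$ of Tables 2 and 3. This is obtained by running the blow-up induction: one starts from the two degree-$7$ surfaces and at each step blows up a point $p$ lying on no $(-2)$-curve. Which negative curves $D(Z)$ carries is governed by Lemma \ref{intersection_minus_one_curves} (only a negative curve itself, or a class $L$ with $L^2=0$, $-K_Y\cdot L=2$, can acquire a point of multiplicity $\le 2$), Remark \ref{mult_remark} (multiplicity equals intersection with the exceptional curve), Lemma \ref{lem34} (the list of decompositions of such $L$), and Lemma \ref{genus_zero} (a member of $|L|$ through a prescribed point of a rational curve exists). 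Discarding the configurations in which $D(Z)$ ceases to be a tree (those are treated in Sections 4--5) leaves precisely the surfaces tabulated, together with the decomposition $D_K$ of $-K_Y$ and the slave divisor recorded in each row; in particular this pins down $n(Y)$ of Definition \ref{denominator}.

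Next I would prove, for each surface, the upper bound on $\sigma'(Y)$. Write an arbitrary log-canonical $D\equiv -K_Y$ as $D=\sum d_iD_i+\sum e_jE_j+\sum a_kA_k$, separating the prime components with $D_i^2\ge 0$, $E_j^2=-1$, $A_k^2=-2$. Since $-K_Y\cdot D_i\ge 2$ for the moving components, the inequality $\tfrac12(d+e)\ge\sigma'(Y)\ge 1+\tfrac{d-1}{n(Y)}$ established just above gives $\sum d_i+\sum e_j\le\tfrac12(d+e)$, and the remaining contribution $\sum a_k$ together with the precise number of $(-1)$-curves allowed to occur with coefficient near $1$ is bounded using log-canonicity against the explicit Dynkin shape of $D(Y)$ — a short chain, a $D_4$/$D_5$ fork, an $E_6$/$E_7$ subgraph, and so on. It is exactly this combinatorics that forces the tabulated bound; for the surfaces with $\gamma(X)=0$ it collapses to $\sigma'(Y)\le\dim X+\rho(X)$, already contained in Theorem \ref{smooth} and Lemma \ref{kleiman}.

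Then I would match each upper bound with an explicit boundary. When $\gamma(X)=0$ I take the reduced snc divisor in the last column, a sum of $(-1)$-curves, $(-2)$-curves and moving curves $L_i$ ($L_i^2=0$, $-K_Y\cdot L_i=2$) linearly equivalent to $-K_Y$; these are assembled from Lemma \ref{lem34} and the cycle mechanism of Lemma \ref{exists}, and log-canonicity is automatic since the divisor is reduced with smooth transversal components. When $\gamma(X)>0$ I take a convex combination such as $\tfrac1{n(Y)}D_K+\bigl(1-\tfrac1{n(Y)}\bigr)\mathcal C$, where $\mathcal C\in|-K_Y|$ is a smooth elliptic curve (available by the classification), or $\tfrac12 D_K+\tfrac12(L+E)$ with $L$ moving; here $\sum d_i=\tfrac1{n(Y)}\cdot(\text{sum of the weights of }D_K)+\bigl(1-\tfrac1{n(Y)}\bigr)$, which equals the tabulated value, and log-canonicity reduces to the fact that $(Y,\tfrac1{n(Y)}D_K)$ has log canonical threshold $1$ — computed from the Dynkin weights — while $(Y,\mathcal C)$ is even canonical.

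The main obstacle is twofold. The first is the completeness and accuracy of the classification step: one must verify that the blow-up induction produces no further surface with $D(Y)$ a tree and that each recorded diagram, with its decomposition $D_K$ and its slave divisor, is correct, since an error in a single weight would break both the upper bound and the complement. The second, genuinely delicate, part is the log-canonicity verification for the fractional boundaries on $Y_{4,*},Y_{3,*},Y_{2,*}$: one has to compute discrepancies along long chains and forks of $(-2)$-curves and confirm that the chosen coefficients sit exactly at the log canonical threshold. This is aggravated for $d=2$, where Lemma \ref{lem34} fails, so moving classes $L$ with $L^2=0$, $-K_Y\cdot L=2$ are no longer available as building blocks and one must rely throughout on the elliptic pencil $\mathcal C$ and on direct discrepancy computations — which is why the $d=2$ rows of Table 3 are uniformly of the form $\tfrac16 D_K+\tfrac56\mathcal C$ or $\tfrac12 D_K+\tfrac12\mathcal C$.
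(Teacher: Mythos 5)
Your classification step and your sharpness step (taking either the reduced snc divisor from the last column, or a convex combination $\tfrac{1}{n(Y)}D_K+\bigl(1-\tfrac{1}{n(Y)}\bigr)\mathcal{C}$ as in the lemma preceding the theorem) do follow the paper. But the heart of the theorem is the \emph{upper} bound on $\sigma'(Y)$ for each surface, and there your proposal has a genuine gap: you quote the inequality $\tfrac12(d+e)\ge\sigma'(Y)$ but never give a mechanism for computing the quantity $e=\max\sum e_j$, and you propose instead to bound ``the number of $(-1)$-curves with coefficient near $1$'' by log-canonicity against the Dynkin shape. That is not how the bound works and it will not produce the tabulated values: the coefficients $a_k$ of the $(-2)$-curves do not even enter $\sigma'(Y)$, and the upper bounds in the table are purely intersection-theoretic, not discrepancy-theoretic. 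The paper's device is precisely the slave divisor $D_s$: pairing any $D\equiv -K_Y$ with $D_s$ and using $(E_j,D_s)\ge 1$ for the relevant $(-1)$-curves (respectively $(D_i,D_s)\ge 1$ for the components with $D_i^2\ge 0$) turns the fixed number $(-K_Y,D_s)$ into a bound for $\sum e_j$ (respectively for $\sum d_i+\sum e_j$, up to the correction $\delta$), which is then fed into $\sigma'(Y)\le\tfrac12(d+\max\sum e_j)$. In your write-up $D_s$ appears only as a bookkeeping item that ``pins down $n(Y)$,'' so the step that actually yields $\gamma(X)=\tfrac32,\ \tfrac53,\ \tfrac{11}{6},\ \tfrac52,\dots$ is missing.

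A second, smaller omission: for $Y_{3,4}$ and $Y_{2,4}$ even the slave-divisor estimate is not enough, and the paper runs an extra argument — contract the unique $(-1)$-curve to land on $Y_{4,6}$ (respectively $Y_{3,4}$), push the boundary forward, compare the two estimates and derive $(a-e)<\tfrac16$, hence a contradiction with the assumed bound $\sum d_i+e>\tfrac43$ (respectively $>\tfrac76$). Your uniform ``combinatorics of the Dynkin graph'' plan does not flag these cases, and without some replacement for this contraction trick the rows $Y_{3,4}$ and $Y_{2,4}$ of the table are not justified.
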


%

\begin{proof}
How to use the table:
Denote  
\begin{equation*}
    -K_Y\equiv D = \sum d_iD_i + \sum e_jE_j + \sum a_kA_k ,\quad D_i^2\ge 0.
\end{equation*}
In the table $\mathcal{C}\in |-K_Y|$ is a non-singular curve.
So-called \textit{Slave divisor} $D_s$ is an integer (Except $Y_{4,6}$ and $Y_{2,3}$) combination of divisors from $D(Y)$. To find an upper boundary for $\sigma'(Y)$ we need either to find an upper boundary for $\sum e_j$ and apply lemma \ref{genus_zero} or to estimate $\sum d_i + \sum e_j$ straightforwardly. 
In the former case we need to find $(D,D_s)=(-K_Y,D_s)$, this gives the upper boundary for $\sum e_j$ and find $\sigma(X)=\frac{1}{2}(d+\mathrm{max}\lbrace\sum e_j\rbrace)$. To make the estimation sharp, one can take $D_i$ in the decomposition of $D$ such that $(D_i,-K_Y)=2$.
In the latter case we need to find $(D,D_s)$ again and use the fact that for any prime weil divisor $D_i$ such that $D_i^2\ge 0$ we have $(D_i,D_s)\ge 1$. From this follows 
\begin{equation*}
    (-K_Y,D_s) +\delta  \ge \sum d_i + \sum e_j ,
\end{equation*}
where $\delta$ appears because sometimes $(A_k,D_s)<0$ for some $A_k$ and also for some $E_j$ it may be that $(E_j,D_s)=0$, when usually $(E_j,D_s)=1$. To make the estimation sharp in this case one can take $D_i$ from the decomposition of $D$ such that $(D_i,D_s)=1$.
In the last column there is a divisor $D$ for which the estimation is sharp. 
Cases $Y_{3,4}$ and $Y_{2,4}$ require more exact approach.
Take $Y=Y_{3,4}$ and consider the contraction $\phi \colon Y \to Z=Y_{4,6}$ such that $\phi(E)=p$, where $E$ is the unique $(-1)$-curve on $Y$ and $p\in \widetilde{E}=\phi_*(A)$, where $A$ is a $(-2)$-curve on $Y$ such that $(A,E)=1$. it is clear that $\widetilde{E}$ is a $(-1)$-curve on $X$.
Suppose 
\begin{equation*}
    -K_Y\equiv D=\sum d_iD_i + \sum a_jA_j + aA+eE, \quad d_i,a_j,a,e \in \mathbb{Q} \cap (0,1]
\end{equation*}
Assume that $\sum d_i + e >\frac{4}{3}$
Then 
\begin{equation*}
    -K_Y\equiv \widetilde{D}=\sum d_i\widetilde{D_i}+\sum a_jA_j + a\widetilde{E}
\end{equation*}
where $\widetilde{D_i}=\phi_*(D_i)$ are prime Weil divisors and 
\begin{equation*}
    \widetilde{D_i}^2\ge D_i^2\ge 0
\end{equation*}  
holds.
From the table it is known that for $X=Y_{4,6}$ we have
$\sum d_i + a \le \frac{3}{2}$
 and then we get
\begin{equation*}
    \frac{4}{3} < \sum d_i + e = \sum d_i+a+(e-a) \le \frac{3}{2} +(e-a),
\end{equation*} 
from this follows $(a-e)<\frac{1}{6}$.
But on the other hand, using $D_s$ for $Y$ we deduce that
\begin{equation*}
    \sum d_i+e \le 1 + 2(a-e) < \frac{4}{3},
\end{equation*}
which is a contradiction.
Analogously, for $Y=Y_{2,4}$ take
\begin{equation*}
    D=\sum d_iD_i+\sum a_jA_j+aA+eE,\quad E^2=-1, \quad (A,E)=1.
\end{equation*}
Suppose that $\sum d_i+e > \frac{7}{6}$ .
Consider the contraction $\psi \colon Y \to X=Y_{3,4}$, such that $\phi(E)=p\in \phi_*(A)$ and $\phi_*(A)^2=-1$.
Deduce as in the previous paragraph that 
\begin{equation*}
    (a-e)<\frac{4}{3}-\frac{7}{6}=\frac{1}{6}
\end{equation*} 
and use $D_s$ for $Y$ to prove that 
\begin{equation*}
    \sum d_i+e \le 1+(a-e)<\frac{7}{6},
\end{equation*} which is a contradiction.
\end{proof}

From the table it is clearly seen that if $D(Y)$ is a tree with one branch, $\gamma(X)=0$. If $D(Y)$ is a tree with a branch and two leaves, then $\gamma(X)=1$. Other cases are more complicated.

\end{spacing}

\begin{thebibliography}{unsrt}
\selectlanguage{russian}

\bibitem{Kollar-Mori}
J.Kollár, S.Mori, Birational Geometry of Algebraic Varieties. --  Cambridge University Press, 1998 

\bibitem{McKernan}
M.V.Brown, J.McKernan, R.Svaldi, H.R.Zong, A geometric characterization of toric varieties. -- 	Duke Math. J. 167, no. 5 (2018), 923-968.

\bibitem{Prokhorov}
J.G.Prokhorov, Singularities of algebraic varieties. -- M.: MCCME, 2009.--128 p.
\bibitem{Watanabe}
F.Hidaka,K.Watanabe, Normal Gorenstein Surfaces with Ample Anti-canonical Divisor, TOKYO J. MATH. VOL. 4, No. 2, 1981.
\bibitem{Shokurov}
V.Shokurov, Complements on surfaces, Journal reference:	J. of Math. Sci., Vol. 102 (2000), No. 2, 3876--3932

\bibitem{Cheltsov}
I.Cheltsov, Log canonical thresholds of del Pezzo surfaces,GAFA, Geom. funct. anal.
Vol. 18 (2008) 1118 – 1144;
1016-443X/08/041118-27;
DOI 10.1007/s00039-008-0687-2

\end{thebibliography}
\end{document}